\theoremstyle{plain}
\newtheorem{theorem}{Theorem}[section]
\newtheorem{corollary}[theorem]{Corollary}
\newtheorem{lemma}[theorem]{Lemma}
\newtheorem{proposition}[theorem]{Proposition}
\theoremstyle{remark}
\newtheorem{example}[theorem]{Example}
\newcommand{\re}{{\rm Re\, }}
\newcommand{\tr}{{\rm tr\, }}
\newcommand{\dia}{{\rm diag\, }}
\newcommand{\rank}{{\rm rank\, }}
\begin{document}


\thispagestyle{empty}

\begin{center}
{\Large \bf Numerical Radii for Tensor Products of Matrices}
\end{center}

\vspace*{3mm}

\centerline{\bf Hwa-Long Gau$^{a\ast}$, \hspace*{.3cm} \bf Kuo-Zhong Wang$^{b}$,\hspace*{.3cm}\bf Pei Yuan Wu$^{b}$}

\vspace*{3mm}

\noindent{\small
${ }^{a}$Department of Mathematics, National Central University, Chung-Li 32001, Taiwan \\
${ }^{b}$Department of Applied Mathematics, National Chiao
Tung University, Hsinchu 30010,\\ \hspace*{1mm} Taiwan}

\vspace*{7mm}

\noindent
{\bf Abstract.} For $n$-by-$n$ and $m$-by-$m$ complex matrices $A$ and $B$, it is known that the inequality $w(A\otimes B)\le\|A\|w(B)$ holds, where $w(\cdot)$ and $\|\cdot\|$ denote, respectively, the numerical radius and the operator norm of a matrix. In this paper, we consider when this becomes an equality. We show that (1) if $\|A\|=1$ and $w(A\otimes B)=w(B)$, then either $A$ has a unitary part or $A$ is completely nonunitary and the numerical range $W(B)$ of $B$ is a circular disc centered at the origin, (2) if $\|A\|=\|A^k\|=1$ for some $k$, $1\le k<\infty$, then $w(A)\ge\cos(\pi/(k+2))$, and, moreover, the equality holds if and only if $A$ is unitarily similar to the direct sum of the $(k+1)$-by-$(k+1)$ Jordan block $J_{k+1}$ and a matrix $B$ with $w(B)\le\cos(\pi/(k+2))$, and (3) if $B$ is a nonnegative matrix with its real part (permutationally) irreducible, then $w(A\otimes B)=\|A\|w(B)$ if and only if either $p_A=\infty$ or $n_B\le p_A<\infty$ and $B$ is permutationally similar to a block-shift matrix
\[\left[
    \begin{array}{cccc}
      0 & B_1 &   &   \\
        & 0 & \ddots &   \\
        &   & \ddots & B_k \\
        &   &   & 0 \\
    \end{array}
  \right]\]
with $k=n_B$, where $p_A=\sup\{\ell\ge 1: \|A^{\ell}\|=\|A\|^{\ell}\}$ and $n_B=\sup\{\ell\ge 1 : B^{\ell}\neq 0\}$.

\vspace{5mm}

\noindent
{\bf Keywords:} numerical range; numerical radius; tensor product; $S_n$-matrix; nonnegative matrix

\noindent
{\bf AMS Subject Classifications:} 15A60; 15A69; 15B48

\vspace{8mm}

\hrule
\vspace{2mm}

\noindent
${}^*$Corresponding author. Email: hlgau@math.ncu.edu.tw

\newpage

\section{Introduction and Preliminaries}

For any $n$-by-$n$ complex matrix $A$, its \emph{numerical range} $W(A)$ is, by definition, the subset $\{\langle Ax, x\rangle : x\in \mathbb{C}^n, \|x\|=1\}$ of the complex plane $\mathbb{C}$, where $\langle\cdot, \cdot\rangle$ and $\|\cdot\|$ denote the standard inner product and its associated norm in $\mathbb{C}^n$, respectively. The \emph{numerical radius} $w(A)$ of $A$ is $\max\{|z| : z\in W(A)\}$. It is known that $W(A)$ is a nonempty compact convex subset of $\mathbb{C}$, and $w(A)$ satisfies $\|A\|/2\le w(A)\le \|A\|$, where $\|A\|$ denotes the usual operator norm of $A$. For other properties of the numerical range and numerical radius, the reader may consult \cite{7}, \cite[Chapter 22]{9} or \cite[Chapter 1]{12}.

\vspace{4mm}

The \emph{tensor product} (or \emph{Kronecker product}) $A\otimes B$ of an $n$-by-$n$ matrix $A=[a_{ij}]_{i,j=1}^n$ and an $m$-by-$m$ matrix $B$ is the $(mn)$-by-$(mn)$ matrix
\[\left[
    \begin{array}{ccc}
      a_{11}B & \cdots & a_{1n}B \\
      \vdots &   & \vdots \\
      a_{n1}B & \cdots & a_{nn}B \\
    \end{array}
  \right].\]
It is known that $A\otimes B$ and $B\otimes A$ are unitarily similar and $\|A\otimes B\|=\|A\|\cdot\|B\|$. Other properties of the tensor product can be found in \cite[Chapter 4]{12}.

\vspace{4mm}

The main concern of this paper is the relations between the numerical radius of $A\otimes B$ and those of $A$ and $B$. For one direction, we have $w(A\otimes B)\le\min\{\|A\|w(B), \|B\|w(A)\}$. This can be proven by using the unitary dilation of contractions, as to be done below. On the other hand, we also have $w(A\otimes B)\ge w(A)w(B)$. We are interested in when these become equalities. In the present paper, we obtain various conditions, necessary or sufficient, for $w(A\otimes B)=\|A\|w(B)$ to hold. The discussions on the equality $w(A\otimes B)=w(A)w(B)$ will be the subject of a subsequent paper of ours.

\vspace{4mm}

For the ease of exposition, we introduce two indices for an $n$-by-$n$ matrix $A$: the \emph{power norm index} $p_A$ and \emph{nilpotency index} $n_A$ of $A$. They are defined, respectively, by
\[p_A=\sup\{k\ge 1 : \|A^k\|=\|A\|^k\}\]
and
\[n_A=\left\{\begin{array}{ll} \sup\{k\ge 1 : A^k\neq 0_n\} & \ \ \ \mbox{if } \ A\neq 0_n,\\ 0 & \ \ \ \mbox{if } \ A=0_n,\end{array}\right.\]
where $0_n$ denotes the $n$-by-$n$ zero matrix.

\vspace{4mm}

We start in Section 2 by proving that if $\|A\|=1$ and $w(A\otimes B)=w(B)$, then either $A$ has a unitary part or $A$ is completely nonunitary and $W(B)$ is a circular disc centered at the origin (Theorem \ref{2.2}). The proof depends on the dilation of $A$ to a direct sum of $S_{\ell}$-matrices with $\ell\le n$, the Poncelet property of the numerical ranges of matrices of the latter class, and Anderson's theorem on the circular disc numerical range. As a by-product, we obtain a lower bound for $w(A)$ when $A$ satisfies $\|A\|=\|A^k\|=1$ for some $k$, $1\le k< n$: $w(A)\ge\cos(\pi/(k+2))$, and determine exactly when this bound is attained: this is the case if and only if $A$ is unitarily similar to $J_{k+1}\oplus B$, where $J_{k+1}$ is the $(k+1)$-by-$(k+1)$ \emph{Jordan block}
\[\left[
    \begin{array}{cccc}
      0 & 1 &   &   \\
        & 0 & \ddots &   \\
        &   & \ddots & 1 \\
        &   &   & 0 \\
    \end{array}
  \right]\]
and $B$ is a finite matrix with $w(B)\le\cos(\pi/(k+2))$ (Theorem \ref{2.10}). This generalizes the classical result of Willams and Crimmins \cite{17} for $k=1$. We conclude Section 2 with a result on nilpotent contractions, namely, we prove that if $A$ is an $n$-by-$n$ matrix with $\|A\|=1$, then a necessary and sufficient condition for $p_A=n_A<\infty$ to hold is that $A$ be unitarily similar to a direct sum $J_{k+1}\oplus B$, where $k=p_A$ and $B^{k+1}=0$ (Theorem \ref{2.13}).

\vspace{4mm}

Finally, in Section 3, we consider $B$ to be a nonnegative matrix with $\re B$ ($=(B+B^*)/2$) (permutationally) irreducible. We obtain in Theorem \ref{3.1} a complete characterization for $w(A\otimes B)=\|A\|w(B)$, namely, this is the case if and only if either $p_A=\infty$ or $n_B\le p_A<\infty$ and $B$ is permutationally similar to a block-shift matrix of the form
\[\left[
    \begin{array}{cccc}
      0 & B_1 &   &   \\
        & 0 & \ddots &   \\
        &   & \ddots & B_k \\
        &   &   & 0 \\
    \end{array}
  \right]\]
with $k=n_B$.

\vspace{4mm}

As was mentioned before, the inequality $w(A\otimes B)\le\|A\|w(B)$ for $n$-by-$n$ and $m$-by-$m$ matrices $A$ and $B$ is known. It is a consequence of \cite[Theorem 3.4]{10} because $A\otimes B$ is the product of $A\otimes I_m$ and $I_n\otimes B$, and the latter two matrices \emph{doubly commute}, that is, $A\otimes I_m$ commutes with both $I_n\otimes B$ and its adjoint $I_n\otimes B^*$. Here we give a simple proof based on the unitary dilation of contractions.

\vspace{3mm}

\begin{proposition}\label{1.1}
If $A$ and $B$ are $n$-by-$n$ and $m$-by-$m$ matrices, respectively, then $w(A\otimes B)\le\min\{\|A\|w(B), \|B\|w(A)\}$.
\end{proposition}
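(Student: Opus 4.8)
The plan is to reduce the problem to the scalar numerical-radius inequality $w(TS)\le\|T\|\,w(S)$ when $T$ doubly commutes with $S$, specialized via the unitary-dilation machinery for a single contraction. Since $A\otimes B$ and $B\otimes A$ are unitarily similar, it suffices to prove $w(A\otimes B)\le\|A\|\,w(B)$; the bound $\|B\|\,w(A)$ then follows by swapping the roles of $A$ and $B$.

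First I would normalize: assuming $A\neq 0$, replace $A$ by $A/\|A\|$ so that $\|A\|=1$, i.e. $A$ is a contraction; the inequality to prove becomes $w(A\otimes B)\le w(B)$. By the Sz.-Nagy dilation theorem, there is a unitary $U$ on a space $\mathcal H\supseteq\mathbb C^n$ such that $A^k=P U^k|_{\mathbb C^n}$ for all $k\ge 0$, where $P$ is the orthogonal projection onto $\mathbb C^n$; equivalently $A=PU|_{\mathbb C^n}$ with $P$ compressing all powers. Then $A\otimes B=(P\otimes I)(U\otimes B)|_{\mathbb C^n\otimes\mathbb C^m}$, so $A\otimes B$ is a compression of $U\otimes B$. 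Since the numerical range, hence the numerical radius, cannot increase under compression, $w(A\otimes B)\le w(U\otimes B)$.

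Next I would compute $w(U\otimes B)$ using the spectral decomposition of the unitary $U=\int e^{i\theta}\,dE(\theta)$ (or, if one prefers to stay finite-dimensional, dilate only to the minimal unitary $n$-dilation, which for a finite matrix may be taken finite-dimensional, so $U=\bigoplus_j \lambda_j$ after a further unitary change of basis that diagonalizes $U$). Writing $U$ as diagonal with unimodular entries $\lambda_j$, we get $U\otimes B$ unitarily similar to $\bigoplus_j \lambda_j B$, and hence
\[
w(U\otimes B)=\max_j w(\lambda_j B)=\max_j w(B)=w(B),
\]
using the rotation invariance $w(\lambda B)=w(B)$ for $|\lambda|=1$. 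Combining the two displays gives $w(A\otimes B)\le w(B)$, and undoing the normalization yields $w(A\otimes B)\le\|A\|\,w(B)$. The case $A=0$ is trivial.

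The only genuine subtlety — really the one point to get right rather than a true obstacle — is the passage to a diagonal (or spectrally decomposed) form of the dilating unitary: one must make sure the dilation is taken so that $A\otimes B$ is honestly a compression of $U\otimes B$ (it is, since tensoring with the fixed matrix $B$ commutes with the compression in the $A$-variable), and that $w$ is monotone under compression (immediate from $W(P T|_{\mathcal K})\subseteq W(T)$). Everything else — rotation invariance of $w$, $w(\bigoplus_j C_j)=\max_j w(C_j)$, and $\|A\otimes B\|=\|A\|\,\|B\|$ — is standard. If one wishes to avoid even the finite-dimensionality discussion of the unitary dilation, one can instead invoke directly the cited result \cite[Theorem 3.4]{10} for doubly commuting operators applied to the factorization $A\otimes B=(A\otimes I_m)(I_n\otimes B)$, but the dilation argument above is self-contained and is the proof I would present.
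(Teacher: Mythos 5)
Your proof is correct and is essentially the paper's own argument: dilate the normalized $A$ to a unitary $U$, observe that $A\otimes B$ is a compression of $U\otimes B$, diagonalize $U$, and use $w\bigl(\bigoplus_j\lambda_jB\bigr)=\max_j w(\lambda_jB)=w(B)$. The only cosmetic difference is that the paper uses the explicit $2n$-by-$2n$ Halmos block dilation $\left[\begin{smallmatrix} A & (I_n-AA^*)^{1/2} \\ (I_n-A^*A)^{1/2} & -A^*\end{smallmatrix}\right]$, which keeps everything finite-dimensional and avoids any appeal to the Sz.-Nagy power-dilation theorem (only a first-power dilation is needed here).
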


\vspace{1pt}

\begin{proof}
We need only prove that $w(A\otimes B)\le \|A\|w(B)$, and may assume that $\|A\|=1$. Then the $(2n)$-by-$(2n)$ matrix
\[U=\left[
    \begin{array}{cc}
      A & (I_n-AA^*)^{1/2} \\
      (I_n-A^*A)^{1/2} & -A^* \\
    \end{array}
  \right]\]
is unitary. Let $U$ be unitarily similar to the diagonal matrix $\dia(u_1, \ldots, u_{2n})$, where $|u_j|=1$ for all $j$. Then
\[w(A\otimes B)\le w(U\otimes B) = w(\sum_{j=1}^{2n}\oplus u_jB)=\max_j w(u_jB)=w(B)=\|A\|w(B).\qedhere\]
\end{proof}

\vspace{1mm}

We conclude this section with some basic properties of the indices $p_A$ and $n_A$ of a matrix $A$.

\vspace{3mm}

\begin{proposition}\label{1.2}
Let $A$ be an $n$-by-$n$ matrix. Then
\begin{enumerate}
\item[\rm (a)] $1\le p_A\le n-1$ or $p_A=\infty$,
\item[\rm (b)] $p_A=n-1$ if and only if $A$ is a nonzero multiple of a $S_n$-matrix, and
\item[\rm (c)] the following conditions are equivalent:
\begin{enumerate}
\item[\rm (1)] $p_A=\infty$,
\item[\rm (2)] $\|A\|=\rho(A)$,
\item[\rm (3)] $\|A\|=w(A)$,
\end{enumerate}
and if $\|A\|=1$, then the above are also equivalent to
\begin{enumerate}
\item[\rm (4)] $A$ has a unitary part.
\end{enumerate}
\end{enumerate}
\end{proposition}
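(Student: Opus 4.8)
The plan is to reduce all three parts to a single analysis of the nested subspaces
\[
V_k \;=\; \{x\in\mathbb{C}^n:\|A^kx\|=\|x\|\}\;=\;\ker\bigl(I_n-(A^*)^kA^k\bigr),\qquad k\ge 0,
\]
associated with a contraction $A$ ($\|A\|\le 1$). First I would dispose of $A=0$ (where $p_A=\infty$ and $\rho(A)=w(A)=\|A\|=0$) and then, using that $p_A$ and each of the equalities in (c) is unchanged on replacing $A$ by a nonzero multiple, normalize $\|A\|=1$; note also that $\|A^k\|$ is nonincreasing by submultiplicativity, so $\{k\ge1:\|A^k\|=\|A\|^k\}$ is an initial segment and $p_A$ is its length. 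Since $I_n-(A^*)^kA^k\ge 0$, each $V_k$ is a subspace, and one checks directly that $\mathbb{C}^n=V_0\supseteq V_1\supseteq\cdots$, that $A(V_{k+1})\subseteq V_k$, and that $\|A^k\|=1\iff V_k\ne\{0\}$. The one substantial ingredient is the observation: if $V_j=V_{j+1}$ for some $j$, then $V_{j+1}$ is $A$-invariant and $A|_{V_{j+1}}$ is an isometry, hence a unitary operator on $V_{j+1}$; moreover $V_{j+1}$ actually \emph{reduces} $A$ (an eigenvector of a contraction for a unimodular eigenvalue $\lambda$ necessarily satisfies $A^*x=\overline{\lambda}x$, so $V_{j+1}$, being spanned by such eigenvectors of $A|_{V_{j+1}}$, is $A^*$-invariant as well), and consequently $V_k=V_{j+1}$ for every $k\ge j+1$ and $A$ has a unitary summand whenever $V_{j+1}\ne\{0\}$.

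With this in hand, (c) is quick. If $p_A=\infty$ then $\|A^k\|=1$ for all $k$, so $\rho(A)=\lim_k\|A^k\|^{1/k}=1$ by Gelfand's formula, which is (2); (2)$\Rightarrow$(3) is immediate from $\rho(A)\le w(A)\le\|A\|$; and if $w(A)=1$, a unit vector $x$ with $|\langle Ax,x\rangle|=1$ forces equality in $|\langle Ax,x\rangle|\le\|Ax\|\le1$, so $Ax=\lambda x$ with $|\lambda|=1$, whence $\|A^k\|\ge\|A^kx\|=1$ for all $k$ and $p_A=\infty$. Under $\|A\|=1$: (4)$\Rightarrow$(3) since a unitary summand carries a unimodular eigenvalue, so $w(A)\ge 1$; and (2)$\Rightarrow$(4) since an eigenvalue $\lambda$ of $A$ with $|\lambda|=1$ has an eigenvector spanning, by the observation, a one-dimensional reducing subspace on which $A$ acts as the unitary $[\lambda]$.

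Part (a) is then a dimension count: $p_A\ge1$ always, and if $p_A\ge n$, i.e.\ $V_n\ne\{0\}$, then since $\dim V_0=n$ and $\dim V_n\ge1$ the $n$ inclusions in $V_0\supseteq V_1\supseteq\cdots\supseteq V_n$ cannot all be strict, so $V_j=V_{j+1}$ for some $j\le n-1$; by the observation $V_k=V_{j+1}\ne\{0\}$ for all $k\ge j+1$, so $\|A^k\|=1$ for all $k$ and $p_A=\infty$. For (b) (where we may take $n\ge2$), recall that an $S_n$-matrix is a contraction with $\rank(I_n-A^*A)=1$ whose eigenvalues all lie in the open unit disc; by scaling it suffices to prove $p_A=n-1$ when $A$ itself is an $S_n$-matrix, and to prove $A$ is an $S_n$-matrix when $p_A=n-1$ and $\|A\|=1$. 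For the first, the telescoping identity $I_n-(A^*)^kA^k=\sum_{j=0}^{k-1}(A^*)^j(I_n-A^*A)A^j$ gives $\rank(I_n-(A^*)^kA^k)\le k$, so $V_k\ne\{0\}$ and $\|A^k\|=1$ for $1\le k\le n-1$, i.e.\ $p_A\ge n-1$; since $\rho(A)<1=\|A\|$, part (c) excludes $p_A=\infty$, so $p_A=n-1$ by (a). For the second, $p_A=n-1$ gives $\rho(A)<1$ by (c), while $V_{n-1}\ne\{0\}$, $V_n=\{0\}$, and the observation (which forbids $V_j=V_{j+1}$ for $0\le j\le n-2$, else $V_{n-1}=V_n$) make all of $V_0\supsetneq V_1\supsetneq\cdots\supsetneq V_{n-1}$ strict; as $\dim V_{n-1}\ge1$ this forces $\dim V_1=n-1$, i.e.\ $\rank(I_n-A^*A)=1$, so $A$ is an $S_n$-matrix.

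The main obstacle is the ``observation'': showing that an invariant subspace on which a contraction acts isometrically is in fact reducing. In finite dimensions this is elementary — it boils down to the remark that an eigenvector of a contraction for a unimodular eigenvalue also lies in the kernel of $A^*-\overline{\lambda}I$ — but it is exactly the step that makes the chain $(V_k)$ collapse to a genuine unitary summand, and hence underlies both the structural conclusion in (c)(4) and the dimension counts in (a) and (b); everything else is bookkeeping with $(V_k)$, submultiplicativity of the norm, and Gelfand's formula.
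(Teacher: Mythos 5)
Your proof is correct, and it takes a genuinely different route from the paper's. The paper handles (a) by citing Pt\'{a}k, (b) by citing [4, Theorem 3.1], and most of (c) by citing two problems from Halmos, supplying only the short triangularization argument for (2) $\Rightarrow$ (4); you instead give a self-contained argument built on the decreasing chain $V_k=\ker(I_n-(A^*)^kA^k)$ together with one lemma: a stall $V_j=V_{j+1}\neq\{0\}$ yields a reducing subspace on which $A$ is unitary (via the fact that an eigenvector of a contraction for an eigenvalue of modulus $\|A\|$ is also an eigenvector of $A^*$ --- essentially the same fact the paper uses, in block-matrix form, for (2) $\Rightarrow$ (4)). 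That single lemma then gives Pt\'{a}k's bound in (a) by a pigeonhole count on dimensions, the $S_n$ characterization in (b) via the telescoping identity $I_n-(A^*)^kA^k=\sum_{j=0}^{k-1}(A^*)^j(I_n-A^*A)A^j$ and the forced strictness of the inclusions $V_j\supsetneq V_{j+1}$, and the equivalences in (c) from Gelfand's formula and the equality case of Cauchy--Schwarz. What you lose is brevity; what you gain is that the proposition no longer rests on external references and the common mechanism behind all three parts (the collapse of $(V_k)$ onto a unitary summand) is explicit. Two cosmetic points: part (b) is degenerate for $n=1$ (as you note, $p_A\ge 1$ always, so restricting to $n\ge 2$ is the right reading), and in the forward direction of (b) you should record that $\rank(I_n-A^*A)=1$ with $n\ge 2$ already forces $\|A\|=1$ before invoking the $V_k$ machinery.
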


\vspace{3mm}

Here $\rho(A)$ denotes the \emph{spectral radius} of $A$, that is, $\rho(A)$ is the maximum modulus of eigenvalues of $A$.

\vspace{3mm}

Recall that an $n$-by-$n$ matrix $A$ is of \emph{class} $S_n$ if it is a contraction ($\|A\|\le 1$), its eigenvalues are all in $\mathbb{D}\equiv\{z\in\mathbb{C} : |z|<1\}$, and $\rank(I_n-A^*A)=1$. Any contraction $A$ is unitarily similar to the direct sum of a unitary matrix $U$, called the \emph{unitary part} of $A$, and a completely nonunitary contraction $A'$, called the \emph{c.n.u. part} of $A$. The latter means that $A'$ is not unitarily similar to any direct sum with a unitary summand.

\vspace{3mm}

\begin{proof}[Proof of Proposition $\ref{1.2}$]
(a) was obtained by Pt\'{a}k in 1960 (cf. \cite[Theorem 2.1]{15}) and (b) was proven in \cite[Theorem 3.1]{4}. As for (c), the implication (1) $\Rightarrow$ (2) is by \cite[Problem 88]{9}, (2) $\Rightarrow$ (3) by the known inequalities $\rho(A)\le w(A)\le\|A\|$, (3) $\Rightarrow$ (2) by \cite[Problem 218 (b)]{9}, and (2) $\Rightarrow$ (1) by the inequalities $\rho(A)\le\|A^k\|^{1/k}\le\|A\|$ for all $k \ge 1$. If $\|A\|=\rho(A)=1$, then, letting $\lambda$ be an eigenvalue of $A$ with $|\lambda|=1$, we have the unitary similarity of $A$ and a matrix of the form
{\footnotesize$\left[   \begin{array}{cc}     \lambda & B \\     0 & C \\   \end{array} \right]$}.
Since $\|A\|=|\lambda|=1$ implies that $B=0$, $A$ is unitarily similar to $[\lambda]\oplus C$ and thus has a unitary part. This proves (2) $\Rightarrow$ (4). That (4) $\Rightarrow$ (2) is trivial.
\end{proof}

\vspace{1mm}

\begin{proposition}\label{1.3}
Let $A$ be an $n$-by-$n$ matrix. Then
\begin{enumerate}
\item[\rm (a)] $0\le n_A\le n-1$ or $n_A=\infty$,
\item[\rm (b)] $n_A=n-1$ if and only if $A$ is similar to the $n$-by-$n$ Jordan block $J_n$,
\item[\rm (c)] $n_A=\infty$ if and only if $A$ is not nilpotent, and
\item[\rm (d)] $p_A\le n_A$ for $A\neq 0_n$.
\end{enumerate}
\end{proposition}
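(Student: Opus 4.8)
The plan is to establish the four parts in the order (a), (c), (b), (d), treating the finite case and the infinite case uniformly by first clarifying when $A$ is nilpotent. For part (c), the equivalence is essentially the definition unwound: if $A$ is nilpotent then $A^k = 0_n$ for some $k$, so the set $\{k \ge 1 : A^k \neq 0_n\}$ is bounded (indeed empty for $A = 0_n$), giving $n_A < \infty$; conversely, if $n_A < \infty$ then by definition $A^k = 0_n$ for $k = n_A + 1$, so $A$ is nilpotent. Thus $n_A = \infty$ exactly when $A$ fails to be nilpotent. For part (a), recall that an $n$-by-$n$ nilpotent matrix satisfies $A^n = 0_n$ (its characteristic polynomial is $z^n$, so Cayley--Hamilton applies), hence if $A$ is nilpotent and nonzero then $1 \le n_A \le n-1$; combined with $n_{0_n} = 0$ and part (c), this gives the trichotomy $0 \le n_A \le n-1$ or $n_A = \infty$.

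For part (b), the direction ``$A \sim J_n \Rightarrow n_A = n-1$'' is a direct computation: $J_n^{n-1} \neq 0_n$ (it has a single nonzero entry in the corner) while $J_n^n = 0_n$, and $n_A$ is a similarity invariant. For the converse, suppose $n_A = n-1$. Then $A$ is nilpotent with $A^{n-1} \neq 0_n$. Put $A$ in Jordan form: it is a direct sum of nilpotent Jordan blocks $J_{k_1} \oplus \cdots \oplus J_{k_r}$ with $k_1 + \cdots + k_r = n$. Since $n_{J_{k_i}} = k_i - 1$ and the nilpotency index of a direct sum is the maximum of the summands' indices, we get $n_A = \max_i (k_i - 1) = n-1$, forcing some $k_i = n$, hence $r = 1$ and $A \sim J_n$.

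For part (d), assume $A \neq 0_n$ and let $k = p_A$; we must show $k \le n_A$, which (by parts (a),(c)) is automatic unless $k$ is finite, so assume $1 \le k < \infty$. By definition of $p_A$ we have $\|A^k\| = \|A\|^k$, and since $A \neq 0_n$ this is nonzero, so $A^k \neq 0_n$, whence $k \le n_A$ by the definition of $n_A$ (note $A$ is then automatically non-nilpotent-up-to-power-$k$, and if $A$ is not nilpotent at all then $n_A = \infty$ and the inequality is trivial). I expect no real obstacle here; the only point needing a word of care is the degenerate reading of the suprema (empty sets, the $A = 0_n$ convention, and the case $p_A = \infty$), and one should note at the outset that $p_A = \infty$ forces $A$ non-nilpotent — indeed $\rho(A) = \|A\| > 0$ by Proposition \ref{1.2}(c) when $A \neq 0_n$ — so $n_A = \infty$ as well and (d) holds in that case too.
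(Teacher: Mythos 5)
Your proof is correct and complete; the paper itself omits the proof of Proposition \ref{1.3} as ``easy,'' so there is no argument to compare against. Your route --- Cayley--Hamilton for (a), the definition unwound for (c), the Jordan form for (b), and $\|A^{p_A}\|=\|A\|^{p_A}>0$ for (d) --- is the standard one, and you correctly handle the degenerate cases ($A=0_n$, $p_A=\infty$) that the statement's conventions require.
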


\vspace{3mm}

We omit its easy proofs.

\vspace{3mm}

In the following, we use $\sigma(A)$ to denote the \emph{spectrum} of $A$, that is, $\sigma(A)$ is the set of eigenvalues of $A$. An $n$-by-$n$ matrix $A$ is a \emph{dilation} of an $m$-by-$m$ matrix $B$ (or $B$ is a \emph{compression} of $A$) if there is an $n$-by-$m$ matrix $V$ such that $B=V^*AV$ and $V^*V=I_m$. This is equivalent to $A$ being unitarily similar to a matrix of the form {\footnotesize$\left[ \begin{array}{cc} B & * \\ * & * \\ \end{array} \right]$}.

\vspace{5mm}

\section{Contractions}

We start with a simple condition which yields the equality $w(A\otimes B)=\|A\|w(B)$.

\vspace{1mm}

\begin{lemma}\label{2.1}
If $A$ is an $n$-by-$n$ matrix with $p_A=\infty$, then $w(A\otimes B)=\|A\|w(B)$ for any $m$-by-$m$ matrix $B$. In particular, this is the case for $A$ a contraction with a unitary part.
\end{lemma}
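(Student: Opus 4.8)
The plan is to reduce the general statement to the case already proven, namely $A$ a contraction with a unitary part, and then handle that case directly. For the first step, recall that $p_A = \infty$ means $\|A^k\| = \|A\|^k$ for all $k \ge 1$. By Proposition \ref{1.2}(c), this is equivalent to $\|A\| = \rho(A)$. If $A = 0$, the equality $w(A\otimes B) = 0 = \|A\|w(B)$ is trivial, so assume $A \ne 0$ and normalize by $\|A\|$; that is, replace $A$ by $A/\|A\|$, which leaves the condition $p_A = \infty$ intact (now with $\|A\| = 1$) and scales both sides of the desired equality by $\|A\|$. Thus it suffices to treat a contraction $A$ with $\|A\| = \rho(A) = 1$, which by Proposition \ref{1.2}(c) again is precisely a contraction with a unitary part.

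For the main case, write $A$ (up to unitary similarity) as $U \oplus A'$, where $U$ is unitary and $A'$ is the c.n.u.\ part (possibly absent). Then $A \otimes B$ is unitarily similar to $(U \otimes B) \oplus (A' \otimes B)$, so
\[
w(A\otimes B) = \max\{w(U\otimes B),\, w(A'\otimes B)\}.
\]
For the unitary summand, diagonalize $U$ as $\dia(u_1,\dots,u_r)$ with $|u_j| = 1$; then $U \otimes B$ is unitarily similar to $\bigoplus_j u_j B$, so $w(U\otimes B) = \max_j w(u_j B) = w(B)$, since the numerical radius is rotation-invariant. This already gives $w(A\otimes B) \ge w(B) = \|A\|w(B)$. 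The reverse inequality $w(A\otimes B) \le \|A\|w(B)$ is exactly Proposition \ref{1.1}, so equality follows. (Note this argument is essentially the mechanism already used in the proof of Proposition \ref{1.1}: a unitary dilation of $A$ reduces to direct sums of scalar rotations of $B$; here the unitary part of $A$ itself plays that role directly, without needing to dilate.)

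The only place requiring a little care — and the step I would flag as the potential obstacle — is confirming that a nonzero matrix $A$ with $p_A = \infty$ always has an eigenvalue of modulus $\|A\|$, i.e.\ the equivalence $\|A\| = \rho(A)$, and that after normalization this yields an honest unitary orthogonal direct summand. The first point is handled by Proposition \ref{1.2}(c), implications (1) $\Leftrightarrow$ (2); the second by the argument given in the proof of that proposition, where $\|A\| = |\lambda| = 1$ for an eigenvalue $\lambda$ forces the off-diagonal block in the triangularization $\left[\begin{smallmatrix}\lambda & B\\ 0 & C\end{smallmatrix}\right]$ to vanish, so $A \cong [\lambda] \oplus C$ and one peels off unitary eigenvalues one at a time. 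With the unitary part isolated, the rest is the two-line computation above. So the lemma follows, the ``in particular'' clause being just the observation that a contraction with a unitary part satisfies $\|A\| = 1 = \rho(A)$, hence $p_A = \infty$ by Proposition \ref{1.2}(c).
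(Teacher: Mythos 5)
Your proof is correct; the only difference from the paper's is the mechanism used for the lower bound $\|A\|w(B)\le w(A\otimes B)$. The paper invokes Proposition \ref{1.2}(c) in the form $w(A)=\|A\|$, picks $\lambda\in W(A)$ with $|\lambda|=\|A\|$, and observes that $A$ is then unitarily similar to $\left[\begin{smallmatrix}\lambda & *\\ * & *\end{smallmatrix}\right]$, so that $\lambda B$ is a \emph{compression} of $A\otimes B$ and hence $\|A\|w(B)=w(\lambda B)\le w(A\otimes B)$; no normalization and no vanishing of off-diagonal blocks is needed, since the numerical radius of a compression is automatically dominated. You instead use the form $\rho(A)=\|A\|$, normalize, and peel off an honest unitary orthogonal summand $U$, realizing the blocks $u_jB$ as genuine direct summands of $A\otimes B$. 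That costs you the extra step of justifying that the triangularization $\left[\begin{smallmatrix}\lambda & B\\ 0 & C\end{smallmatrix}\right]$ with $|\lambda|=\|A\|$ forces the off-diagonal block to vanish — which you correctly source from the proof of Proposition \ref{1.2}(c) — but buys nothing extra for this lemma, since a compression already suffices for a numerical-radius lower bound. Both routes finish identically with Proposition \ref{1.1} for the reverse inequality, so your argument is complete, just marginally longer than necessary.
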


\vspace{1pt}

\begin{proof}
Since $p_A=\infty$ implies, by Proposition \ref{1.2} (c), that $\|A\|=w(A)$. If $\lambda$ is a number in $W(A)$ with $|\lambda|=w(A)$, then $|\lambda|=\|A\|$. Since $A$ is unitarily similar to a matrix of the form {\footnotesize$\left[\begin{array}{cc} \lambda & * \\ * & * \\ \end{array} \right]$}, we have the unitary similarity of $A\otimes B$ and {\footnotesize$\left[\begin{array}{cc} \lambda B & * \\ * & * \\ \end{array} \right]$}. It follows that $\|A\|w(B)=w(\lambda B)\le w(A\otimes B)$. On the other hand, we also have $w(A\otimes B)\le\|A\|w(B)$ by Proposition \ref{1.1}. Thus $w(A\otimes B)=\|A\|w(B)$ holds.
\end{proof}

\vspace{1mm}

The next theorem is one of the main results of this section. It gives a necessary condition for the equality $w(A\otimes B)=\|A\|w(B)$.

\vspace{1mm}

\begin{theorem}\label{2.2}
Let $A$ and $B$ be $n$-by-$n$ and $m$-by-$m$ matrices, respectively. If $\|A\|=1$ and $w(A\otimes B)=w(B)$, then either $A$ has a unitary part or A is c.n.u. and $W(B)$ is a circular disc centered at the origin.
\end{theorem}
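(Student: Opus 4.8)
The plan is to assume $\|A\|=1$, $w(A\otimes B)=w(B)$, and that $A$ has no unitary part (i.e., $A$ is c.n.u.), and to deduce that $W(B)$ is a circular disc centered at the origin. The first step is to dilate $A$ to a tractable model. Since $A$ is an $n$-by-$n$ contraction with no unitary part, it can be dilated to a direct sum $\bigoplus_j S_j$ of $S_{\ell}$-matrices with each $\ell\le n$ (this is the dilation alluded to in the introduction; one builds it from the canonical form of contractions together with the fact that any c.n.u. contraction compresses to such a sum). Then $A\otimes B$ dilates to $\bigl(\bigoplus_j S_j\bigr)\otimes B=\bigoplus_j (S_j\otimes B)$, so
\[
w(B)=w(A\otimes B)\le w\Bigl(\bigoplus_j(S_j\otimes B)\Bigr)=\max_j w(S_j\otimes B)\le \max_j\|S_j\|\,w(B)=w(B),
\]
forcing $w(S\otimes B)=w(B)$ for at least one of the summands $S=S_\ell$, an $S_\ell$-matrix with $\|S\|=1$.

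Next I would exploit the Poncelet property of $S_\ell$-matrices: $W(S)$ is the intersection of the (closed) interiors of a one-parameter family of ellipses (or, in the standard normalization, $W(S)$ is inscribed in the unit circle and circumscribes a smaller convex curve), and crucially every boundary point of $\mathbb{D}$ is "traced" by the Poncelet closure. Concretely, for each $\theta$ the matrix $S$ has a unitary $1$-dilation with eigenvalue $e^{i\theta}$ — equivalently, $e^{i\theta}$ together with $\ell-1$ other unimodular points are the vertices of a polygon inscribed in the unit circle and circumscribing $W(S)$. Using such a dilation of $S$ one gets, for every $\theta$, that $e^{i\theta}B$ appears as a compression (direct summand, after the unitary dilation is diagonalized) of a dilation of $S\otimes B$, hence $w(B)=w(S\otimes B)\ge w(e^{i\theta}B)=w(B)$; this is an equality that must be examined more carefully. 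The real content is that from $w(S\otimes B)=w(B)$ and the Poncelet structure one extracts rotational symmetry: if $v$ is a unit vector with $\langle (S\otimes B)v,v\rangle$ of modulus $w(B)$, then writing $v$ in terms of an eigenbasis of a unitary dilation of $S$ and chasing the equality case shows $B$ must have numerical range invariant under all rotations $z\mapsto e^{i\theta}z$ for the angles arising from the Poncelet family — and since these angles fill out a dense (indeed all of) set on the circle, $W(B)$ is rotation-invariant, hence a disc centered at $0$.

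Finally, to pin down that $W(B)$ is genuinely a \emph{disc} (and not, a priori, an annulus or the origin alone) and to make the rotational-invariance argument rigorous, I would invoke Anderson's theorem: if an $m$-by-$m$ matrix $B$ has $W(B)$ contained in the closed disc of radius $w(B)$ and its boundary circle meets $\partial W(B)$ in more than $m$ points, then $W(B)$ equals that closed disc. The rotational symmetry derived above produces infinitely many boundary points of $W(B)$ on the circle $|z|=w(B)$, so Anderson's theorem gives that $W(B)$ is exactly the circular disc $\{z:|z|\le w(B)\}$ centered at the origin. I expect the main obstacle to be the middle step: carefully setting up the unitary dilation of the chosen $S_\ell$-matrix, transporting the equality $w(S\otimes B)=w(B)$ through the dilation to a statement about a specific optimal vector, and showing the equality case forces the extremal vector of $B$ to be rotated by every Poncelet angle — this is where the geometry of the numerical range of $S_\ell$-matrices (the Poncelet property, and the description of boundary points where the circumscribing polygon touches $\partial W(S)$) has to be used with precision rather than softly.
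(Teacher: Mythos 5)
Your overall architecture is exactly the paper's: dilate the c.n.u.\ contraction $A$ to a direct sum of $S_\ell$-matrices, extract one summand $S$ with $w(S\otimes B)=w(B)$, use the Poncelet property to produce unitary dilations of $S$, chase the equality case to find boundary points of $W(B)$ on the circle $|z|=w(B)$, and finish with Anderson's theorem. The endgame is right and the reduction to the $S_\ell$-case is carried out correctly.

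The gap is in the middle step, and it is not merely a matter of polish. First, what the equality case actually yields is weaker than the ``rotational invariance'' you assert: fixing a unitary dilation $U=\dia(\lambda_1,\dots,\lambda_{\ell+1})$ of $S$ and an optimal unit vector $x$ with $\langle (S\otimes B)x,x\rangle=w(B)$ (after rotating $B$ so that $w(B)\in W(S\otimes B)$), one writes $(V\otimes I_m)x=y_1\oplus\cdots\oplus y_{\ell+1}$ and gets $w(B)=\sum_i\langle\lambda_i By_i,y_i\rangle\le\sum_i\|y_i\|^2 w(B)=w(B)$; forcing equality shows only that \emph{for at least one} index $i$ the point $\overline{\lambda_i}\,w(B)$ lies in $\partial W(B)$ --- you have no control over \emph{which} eigenvalue of $U$ does the job, so you cannot conclude that $e^{-i\theta}w(B)\in\partial W(B)$ for your chosen Poncelet angle $\theta$, and hence no rotational invariance. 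Second, because of this, running the argument over many dilations could in principle keep producing the \emph{same} boundary point. The paper's device to close both issues is to choose $m+1$ unitary dilations of $S$ with pairwise \emph{disjoint} spectra (possible by the Poncelet property); then the $m+1$ boundary points obtained, one per dilation, are automatically distinct, and Anderson's theorem applies. Without the disjoint-spectra selection and the explicit equality-chain computation, your sketch does not yet yield the ``more than $m$ points'' hypothesis that Anderson's theorem requires.
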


\vspace{1mm}

We first prove this for the case when $A$ is an $S_n$-matrix. The numerical ranges of such matrices are known to have the \emph{Poncelet property}, namely, if $A$ is of class $S_n$, then, for any point $\lambda$ on the unit circle $\partial \mathbb{D}$, there is a unique (up to unitary similarity) $(n+1)$-by-$(n+1)$ unitary dilation $U$ of $A$ such that $\lambda$ is an eigenvalue of $U$ and each edge of the $(n+1)$-gon $\partial W(U)$ intersects $W(A)$ at exactly one point (cf. \cite[Theorem 2.1 and Lemma 2.2]{2}).

\vspace{1mm}

\begin{lemma}\label{2.3}
Let $A$ be an $S_n$-matrix and $B$ an $m$-by-$m$ matrix. If $w(A\otimes B)=w(B)$, then $W(B)$ is a circular disc centered at the origin.
\end{lemma}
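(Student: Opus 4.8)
The plan is to exploit the Poncelet property of $S_n$-matrices together with the normalization $w(A)\le\|A\|=1$. Since $A$ is an $S_n$-matrix, $w(A)<1$ (indeed $w(A)=\cos(\pi/(n+1))$ for the "extremal" $S_n$-matrix, and in general $w(A)<1$ because the eigenvalues lie in $\mathbb{D}$). First I would observe that the hypothesis $w(A\otimes B)=w(B)=\|A\|w(B)$ is the equality case of Proposition~\ref{1.1}, so nothing is lost; but more importantly, since $w(A)<1$, the naive lower bound $w(A\otimes B)\ge w(A)w(B)$ is strictly less than $w(B)$, so the equality $w(A\otimes B)=w(B)$ is forcing something rigid about $B$.

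**Next**, I would use the $(n+1)$-by-$(n+1)$ unitary dilation. Fix a unit vector $x\otimes y\in\mathbb{C}^n\otimes\mathbb{C}^m$ (or more generally a unit vector $\xi$) achieving $|\langle (A\otimes B)\xi,\xi\rangle|=w(A\otimes B)=w(B)$; after multiplying $B$ by a unimodular scalar we may assume this inner product is a nonnegative real equal to $w(B)$. Let $\lambda\in\partial\mathbb{D}$ be chosen (via the Poncelet property) so that $A$ has an $(n+1)$-by-$(n+1)$ unitary dilation $U$ with $\lambda\in\sigma(U)$ and the edges of $\partial W(U)$ each touch $W(A)$ once. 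Writing $U$ as unitarily similar to $\dia(\lambda,\mu_1,\ldots,\mu_n)$ with all $|\mu_j|=1$, we get
\[
w(B)=w(A\otimes B)\le w(U\otimes B)=\max\{w(\lambda B),w(\mu_1 B),\ldots,w(\mu_n B)\}=w(B),
\]
so equality holds throughout, and in particular $w(A\otimes B)=w(U\otimes B)$. The key point is then that $U\otimes B=\bigoplus_{j}\mu_j B$ (after the diagonalizing unitary), so a unit vector achieving the numerical radius of $U\otimes B$ lives in a single summand — say it is $(0,\ldots,0,v,0,\ldots,0)$ with $\|v\|=1$ and $|\langle \mu_j Bv,v\rangle|=w(B)$, i.e.\ $v$ is a \emph{maximizing vector} for $w(B)$. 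But $A\otimes B$ is a compression of $U\otimes B$ (via $V\otimes I_m$ where $V$ is the isometric embedding $\mathbb{C}^n\hookrightarrow\mathbb{C}^{n+1}$ realizing the dilation $A=V^*UV$). For $w(A\otimes B)=w(U\otimes B)$ to hold, the maximizing vector of the larger matrix must be "visible" through the compression, and tracking this back through the $(n+1)$-gon $\partial W(U)$ touching $W(A)$ at $n+1$ distinct points is what forces $W(B)$ to be rotationally symmetric.

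**The mechanism for the rotational symmetry**, which is the heart of the argument, is this: the Poncelet/dilation setup can be run for \emph{every} $\lambda\in\partial\mathbb{D}$, producing for each $\lambda$ an $(n+1)$-tuple of unimodular numbers $\mu_0(\lambda)=\lambda,\mu_1(\lambda),\ldots,\mu_n(\lambda)$, and these sweep out all of $\partial\mathbb{D}$ as $\lambda$ varies. The equality forces, for each such $\mu_j(\lambda)$ that can "carry" a maximizing vector, that $w(\mu_j(\lambda)B)=w(B)$ together with a matching of maximizing vectors between $A\otimes B$ and $\mu_j(\lambda)B$. The real work — and the step I expect to be the main obstacle — is to show that the equality $w(A\otimes B)=w(U\otimes B)$ combined with $A\otimes B$ being a proper compression of $U\otimes B$ actually forces $w(rB)$... more precisely, it forces that a maximizing vector $v$ for $B$ satisfies $\langle Bv,v\rangle=0$, hence (by the standard fact that if $w(B)$ is attained at $v$ with $\langle Bv,v\rangle=0$ then... ) one runs the argument with the \emph{rotated} matrix $e^{i\theta}B$ and concludes $W(B)=W(e^{i\theta}B)$ for all $\theta$, i.e.\ $W(B)$ is invariant under all rotations about $0$; being compact and convex, it is then a disc centered at $0$. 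Concretely, I would argue: since $A\otimes B$ ($S_n$-dilation has a genuinely larger space, $n+1>n$), the compression inequality $w(A\otimes B)\le w(U\otimes B)$ is strict \emph{unless} the maximizing direction for $U\otimes B$ lies in the range of the embedding tensored with everything — but it lies in a single eigenspace-summand $\mu_j$, and $\lambda=\mu_0$ does \emph{not} lie in $\sigma(A)$ (eigenvalues of $A$ are in $\mathbb{D}$), so $\lambda$'s eigenvector is not in the range of $V$; playing $\lambda$ over all of $\partial\mathbb{D}$ and using that at least one $\mu_j(\lambda)$ is "new" each time gives, for every unimodular $\zeta$, that $w(\zeta B)=w(B)$ is attained at a vector $v$ with $\langle Bv,v\rangle$ orthogonal (in the relevant sense) to the achieved value — and iterating yields full rotational invariance of $W(B)$.

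**Alternatively, and perhaps cleaner**, I would invoke Anderson's theorem directly: if $w(C\otimes B)=w(B)$ where $W(C)$ is contained in the closed unit disc and $1\in W(C)$... but here $1\notin W(A)$ since $A$ is $S_n$ with spectrum in $\mathbb{D}$ — this is precisely why we are NOT in the "unitary part" case. So I would instead show: since $w(A)<1=\|A\|$, pick the unimodular scalar $\mu$ on the unit circle "closest" in the relevant sense; the equality $w(A\otimes B)=w(B)$ forces $W(B)\subseteq W(A\otimes B)$ to contain points of modulus $w(B)$ in \emph{every} argument direction (because $A\otimes B$'s numerical range, being a compression of $\bigoplus \mu_j B=\bigoplus e^{i\theta_j}B$ over a spread-out set of angles $\theta_j$ that rotates as $\lambda$ moves, must "see" the full rotational orbit of the extreme points of $W(B)$), whence $\partial W(B)$ meets the circle $|z|=w(B)$ at a dense set of arguments; by compactness $W(B)\supseteq\{|z|=w(B)\}$, and combined with $w(B)=\max_{W(B)}|z|$ this gives $W(B)=\{|z|\le w(B)\}$, a disc centered at the origin. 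The main obstacle remains making the "$A\otimes B$ sees the full rotational orbit" step rigorous — that is, extracting from a single scalar equality of numerical radii the conclusion that rotating $B$ does not change its numerical range — and I expect the author handles it via a careful choice of maximizing vector in the dilation space plus the uniqueness clause in the Poncelet property.
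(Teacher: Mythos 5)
Your overall strategy is the right one --- the Poncelet property supplies a family of $(n+1)$-by-$(n+1)$ unitary dilations of $A$, and the equality $w(A\otimes B)=w(B)$, pushed through such a dilation, should force points of $\partial W(B)$ onto the circle $|z|=w(B)$. But there are two genuine gaps. First, the step that actually produces those boundary points is asserted rather than proved. Your claim that a maximizing vector for $U\otimes B=\bigoplus_j\mu_jB$ ``lives in a single summand'' is false in general; what one must do instead is lift a maximizing vector $x$ of $A\otimes B$ to $(V\otimes I_m)x=y_1\oplus\cdots\oplus y_{n+1}$, write $w(B)=\sum_i\langle\mu_iBy_i,y_i\rangle\le\sum_i\|y_i\|^2\,|\langle B(y_i/\|y_i\|),y_i/\|y_i\|\rangle|\le\sum_i\|y_i\|^2\,w(B)=w(B)$, and read off from the forced equalities that \emph{every} nonzero component $y_i$ yields a point $\overline{\mu}_i\,w(B)\in\partial W(B)$. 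You never carry out this computation, and without it the rigidity you invoke has no content.

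Second, your endgame does not close. You propose to let $\lambda$ sweep $\partial\mathbb{D}$, collect the resulting points $\overline{\mu}\,w(B)$, and conclude ``by compactness'' that $W(B)\supseteq\{|z|=w(B)\}$. That requires the arguments of the collected points to be \emph{dense} in $[0,2\pi)$, and nothing in your argument establishes density: for each dilation you only control the eigenvalues that carry a nonzero component, and these could a priori accumulate near a single direction. The correct finishing move is Anderson's theorem applied to $W(B)$ (not to $W(A)$, which is where your ``alternative'' paragraph tries to aim it): since $W(B)\subseteq\{|z|\le w(B)\}$, it suffices to exhibit \emph{more than $m$} distinct points of $\partial W(B)$ on the circle $|z|=w(B)$. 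This is achieved finitely, with no density needed: choose $m+1$ unitary dilations $U_1,\ldots,U_{m+1}$ of $A$ with pairwise disjoint spectra (possible by the Poncelet property), run the equality chain above for each, and note that the resulting boundary points $\overline{\mu}_{i_j}w(B)$ are automatically distinct because the spectra are disjoint. Anderson's theorem then gives $W(B)=\{z:|z|\le w(B)\}$. So the two missing ingredients are the explicit equality-case analysis of the lifted maximizing vector and the correct (finite, Anderson-based) use of the boundary points; as written, the proposal does not constitute a proof.
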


\vspace{1pt}

\begin{proof}
Let $U_1, \ldots, U_{m+1}$ be $(n+1)$-by-$(n+1)$ unitary dilations of $A$ with $\sigma(U_i)\cap\sigma(U_j)=\emptyset$ for all $i$ and $j$, $1\le i\neq j\le m+1$. We may assume that $U_j=\dia(\lambda_{1 j}, \ldots, \lambda_{n+1, j})$ for each $j$, where $|\lambda_{i j}|=1$ for all $i$ and $j$. Let $V_j$ be an $(n+1)$-by-$n$ matrix such that $A=V_j^*U_jV_j$ and $V_j^*V_j=I_n$ for each $j$. Since $\|A\|=1$ and
\[ w(A\otimes \lambda B)=w(A\otimes B)=w(B)=w(\lambda B)\]
for any $\lambda$, $|\lambda|=1$, we may further assume that $w(B)$ is in $W(A\otimes B)$. Let $x$ be a unit vector in $\mathbb{C}^n\otimes \mathbb{C}^m$ such that $\langle (A\otimes B)x, x\rangle=w(B)$. We decompose $(V_j\otimes I_m)x$ as $y_{1j}\oplus\cdots\oplus y_{n+1, j}$ with $y_{ij}$, $1\le i\le n+1$, in $\mathbb{C}^m$ for each $j$. Then
\begin{align*}
w(B) &= \langle (A\otimes B)x, x\rangle\\
&= \langle (U_j\otimes B)(V_j\otimes I_m)x, (V_j\otimes I_m)x\rangle\\
&= \langle (\lambda_{1 j}B\oplus\cdots\oplus\lambda_{n+1, j}B)(y_{1 j}\oplus\cdots\oplus y_{n+1, j}), y_{1 j}\oplus\cdots\oplus y_{n+1, j}\rangle\\
&= \sum_{i=1}^{n+1}\langle \lambda_{i j}By_{i j}, y_{i j}\rangle\\
&\le \sum_{i=1}^{n+1}|\langle By_{i j}, y_{i j}\rangle|.
\end{align*}
Letting $\eta_{ij}=\langle B(y_{i j}/\|y_{i j}\|), y_{i j}/\|y_{i j}\|\rangle$ for each $y_{ij}\neq 0$, we obtain
\[w(B)=\sum_{y_{ij}\neq 0}\lambda_{ij}\|y_{ij}\|^2\eta_{ij}\le\sum_{y_{ij}\neq 0}\|y_{ij}\|^2|\eta_{ij}|\le\sum_{y_{ij}\neq 0}\|y_{ij}\|^2 w(B)=w(B)\]
since
\[\sum_{i}\|y_{ij}\|^2=\|(V_j\otimes I_m)x\|^2=\|x\|^2=1.\]
Thus we have equalities throughout the above sequence, which yields that $w(B)=\lambda_{ij}\eta_{ij}$ for $y_{ij}\neq 0$. Since $\sum_i\|y_{ij}\|^2=1$, this must hold for at least one $i$, say, $i_j$. Hence $\overline{\lambda}_{i_j j}w(B)=\eta_{i_j j}$ is in $\partial W(B)$ for each $j$. Note that such $\overline{\lambda}_{i_j j}w(B)$'s, $1\le j\le m+1$, are distinct from each other by our assumption on the disjointness of the spectra of the $U_j$'s. This shows that the boundary of $W(B)$ and the circle $|z|=w(B)$ intersect at at least $m+1$ points. Since $W(B)$ is contained in $\{z\in\mathbb{C} : |z|\le w(B)\}$, we apply Anderson's theorem (cf. \cite[Theorem]{3} or \cite{20}) to infer that $W(B)=\{z\in\mathbb{C}: |z|\le w(B)\}$.
\end{proof}

\vspace{2mm}

\begin{proof}[Proof of Theorem $\ref{2.2}$]
We assume that $A$ is c.n.u. Then $A$ can be dilated to the direct sum $A'\oplus\cdots\oplus A'$ of $\rank(I_n-A^*A)$ many copies of some $S_{\ell}$-matrix $A'$ with $\ell\le n$ (cf. \cite[Theorem 1.4]{18} or \cite[Lemma 3 (a)]{21}). Hence $A\otimes B$ dilates to $(A'\oplus\cdots\oplus A')\otimes B=(A'\otimes B)\oplus\cdots\oplus (A'\otimes B)$. We have
\[ w(B)=w(A\otimes B)\le w((A'\otimes B)\oplus\cdots\oplus (A'\otimes B))= w(A'\otimes B)\le\|A'\|w(B)=w(B).\]
Thus $w(A'\otimes B)=w(B)$. It follows from Lemma \ref{2.3} that $W(B)$ is a circular disc centered at the origin.
\end{proof}

\vspace{3mm}

An easy consequence of Theorem \ref{2.2} is that the converse of Lemma \ref{2.1} is also true.

\vspace{3mm}

\begin{corollary}\label{2.4}
For an $n$-by-$n$ matrix $A$, the equality $w(A\otimes B)=\|A\|w(B)$ holds for all matrices $B$ if and only if $p_A=\infty$.
\end{corollary}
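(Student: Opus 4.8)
The plan is to treat the two implications separately. The ``if'' direction --- $p_A=\infty$ implies $w(A\otimes B)=\|A\|w(B)$ for every $B$ --- is exactly Lemma \ref{2.1}, so nothing new is needed there. The content lies in the converse, which I would prove in contrapositive form: assuming $p_A<\infty$, I exhibit one matrix $B$ for which $w(A\otimes B)<\|A\|w(B)$.

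First I would normalize. Since $p_A<\infty$ forces $A\neq 0_n$ (if $A=0_n$ then $\|A^k\|=\|A\|^k$ for all $k$, so $p_A=\infty$), we have $\|A\|>0$; as $w(\cdot)$ and $\|\cdot\|$ are positively homogeneous and $p_{cA}=p_A$ for $c\neq 0$, we may assume $\|A\|=1$. By Proposition \ref{1.2}\,(c), the condition $p_A<\infty$ (with $\|A\|=1$) is equivalent to the failure of (3), that is, to $w(A)<\|A\|=1$ (and also to the failure of (4), i.e. $A$ being c.n.u.). Now take $B=[\,1\,]$, the $1$-by-$1$ identity. Then $A\otimes B$ is literally $A$, so $w(A\otimes B)=w(A)<1=\|A\|\,w(B)$, and the claimed equality fails for this $B$. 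Contrapositively, if $w(A\otimes B)=\|A\|w(B)$ holds for all $B$, then $p_A=\infty$, which together with Lemma \ref{2.1} gives the corollary. (The route indicated in the text as ``a consequence of Theorem \ref{2.2}'' is the same idea dressed differently: pick any $B$ whose numerical range is not a circular disc centered at the origin --- again $B=[\,1\,]$, with $W(B)=\{1\}$, does it --- and, since $A$ is c.n.u., Theorem \ref{2.2} forbids $w(A\otimes B)=w(B)$, whence $w(A\otimes B)<\|A\|w(B)$ by Proposition \ref{1.1}.)

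There is essentially no hard step; the proof is bookkeeping against the machinery already assembled. The only points requiring care are the degenerate case $A=0_n$ in the normalization, and quoting Proposition \ref{1.2}\,(c) correctly so that ``$p_A<\infty$ and $\|A\|=1$'' is seen to yield ``$w(A)<1$''. The one conceptual observation is that testing the purported equality against the trivial one-dimensional $B$ already reduces it to $w(A)=\|A\|$, which by Proposition \ref{1.2}\,(c) is precisely $p_A=\infty$.
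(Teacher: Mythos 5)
Your proof is correct, and the necessity half is genuinely more elementary than the paper's. The paper deduces the converse of Lemma \ref{2.1} from Theorem \ref{2.2}: normalizing $\|A\|=1$ and choosing any $B$ whose numerical range is not a circular disc centered at the origin, the equality $w(A\otimes B)=w(B)$ forces, via that theorem, the alternative that $A$ has a unitary part, whence $p_A=\infty$ by Proposition \ref{1.2}\,(c). You instead test the equality against the scalar matrix $B=[1]$, for which $A\otimes B=A$ and $w(B)=1=\|B\|$, so the hypothesis collapses at once to $w(A)=\|A\|$, which is condition (3) of Proposition \ref{1.2}\,(c) and therefore equivalent to $p_A=\infty$. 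This bypasses Theorem \ref{2.2} and all of the dilation and Poncelet machinery behind it; indeed the normalization $\|A\|=1$ and the separate treatment of $A=0_n$ are also dispensable, since the equivalence $(1)\Leftrightarrow(3)$ in Proposition \ref{1.2}\,(c) is stated without any normalization. Your parenthetical observation that $B=[1]$ equally serves the paper's argument (as $W([1])=\{1\}$ is not a disc centered at the origin) correctly reconciles the two routes. What the paper's phrasing buys is the expository point that the corollary is exactly the converse one reads off from the main structural theorem of the section; what yours buys is logical economy, showing that the corollary needs only Lemma \ref{2.1} and Proposition \ref{1.2}\,(c).
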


\vspace{1pt}

\begin{proof}
For the necessity, assume that $\|A\|=1$ and let $B$ be any matrix with its numerical range not a circular disc centered at the origin. Theorem \ref{2.2} yields that $A$ has a unitary part. Then $p_A=\infty$ follows immediately.
\end{proof}

\vspace{3mm}

In Theorem \ref{2.2}, if $B$ is the Jordan block $J_m$, then we have the following characterizations for $w(A\otimes B)=\|A\|w(B)$.

\vspace{3mm}

\begin{theorem}\label{2.5}
Let $A$ be an $n$-by-$n$ matrix with $\|A\|=1$. Then the following conditions are equivalent:
\begin{enumerate}
\item[\rm (a)] $W(A\otimes J_m)=W(J_m)$,
\item[\rm (b)] $w(A\otimes J_m)=w(J_m)$,
\item[\rm (c)] $A\otimes J_m$ is unitarily similar to $J_m\oplus B$ for some matrix $B$ with $w(B)\le w(J_m)$, and
\item[\rm (d)] $\|A^{m-1}\|=1$.
\end{enumerate}
If, in addition, $n=m$, then the above conditions are also equivalent to
\begin{enumerate}
\item[\rm (e)] either $A$ has a unitary part or $A$ is of class $S_n$, and
\item[\rm (f)] $p_A=\infty$ or $n-1$.
\end{enumerate}
\end{theorem}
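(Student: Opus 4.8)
\textbf{Proof proposal for Theorem \ref{2.5}.}

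The plan is to establish the cycle of implications (a) $\Rightarrow$ (b) $\Rightarrow$ (d) $\Rightarrow$ (c) $\Rightarrow$ (a), and then, under the hypothesis $n=m$, to connect (d) with (e) and (f). First, (a) $\Rightarrow$ (b) is immediate since $W(A\otimes J_m)=W(J_m)$ forces the two sets to have the same maximal modulus. For (b) $\Rightarrow$ (d): apply Theorem \ref{2.5}'s predecessor, i.e. the dilation machinery in the proof of Theorem \ref{2.2}, noting that $W(J_m)$ is \emph{not} a circular disc centered at the origin (its boundary touches the critical circle $|z|=\cos(\pi/(m+1))$ at only one point, by the classical description of $W(J_m)$ as the disc of that radius). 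Hence by Theorem \ref{2.2}, $A$ must have a unitary part. Write $A$ unitarily similar to $U\oplus A'$ with $U$ unitary of size $r\ge 1$; then $\|A^{m-1}\|\ge\|U^{m-1}\|=1$, and combined with $\|A^{m-1}\|\le\|A\|^{m-1}=1$ we get (d). Conversely, (d) $\Rightarrow$ (c): if $\|A^{m-1}\|=1=\|A\|$, then $p_A\ge m-1$, so by Proposition \ref{1.2}(a) either $p_A=\infty$ or $p_A=m-1$ exactly (when $n\le m$; in general $p_A\ge m-1$). If $p_A=\infty$, Lemma \ref{2.1} gives $w(A\otimes J_m)=w(J_m)$, and one then has to upgrade this to the unitary-similarity statement (c); if instead $p_A$ is finite but $\ge m-1$, one uses Theorem \ref{2.10}-type structure — actually here the cleanest route is to invoke that $\|A^{m-1}\|=1$ means there is a unit vector $x$ with $\|A^{m-1}x\|=1$, forcing $x, Ax, \ldots, A^{m-1}x$ to be an orthonormal sequence on which $A$ acts as the shift $J_m^*$ (a standard contraction argument: equality in $\|A^{m-1}x\|\le\|A^{m-2}x\|\le\cdots\le\|x\|$ forces each $A^jx$ to be a unit vector and $A$ to be isometric along this chain, and orthogonality follows since $A$ is a contraction dilating a shift). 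That exhibits $J_m^*$, hence $J_m$, as a compression of $A$, from which $J_m\otimes J_m$-type reasoning yields that $A\otimes J_m$ has $J_m$ as an orthogonal direct summand. The remaining summand $B$ satisfies $w(B)\le w(A\otimes J_m)\le\|A\|w(J_m)=w(J_m)$ by Proposition \ref{1.1}. Finally (c) $\Rightarrow$ (a) is routine: if $A\otimes J_m\cong J_m\oplus B$ with $w(B)\le w(J_m)$, then $W(A\otimes J_m)=\mathrm{conv}(W(J_m)\cup W(B))=W(J_m)$ since $W(B)$ lies in the disc $W(J_m)$.

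For the addendum with $n=m$: assuming (d), we have $p_A\ge n-1$, so by Proposition \ref{1.2}(a), (b) either $p_A=\infty$ — equivalently (Proposition \ref{1.2}(c)) $A$ has a unitary part — or $p_A=n-1$, which by Proposition \ref{1.2}(b) says $A$ is a nonzero multiple of an $S_n$-matrix, and since $\|A\|=1$ that multiple is $1$, i.e. $A$ is of class $S_n$; this is exactly (e), and the disjunction ``$p_A=\infty$ or $n-1$'' is (f). Conversely (f) $\Rightarrow$ (d) is trivial since $p_A\ge n-1=m-1$ gives $\|A^{m-1}\|=\|A\|^{m-1}=1$, and (e) $\Rightarrow$ (f) is Proposition \ref{1.2}(b), (c). So (d) $\Leftrightarrow$ (e) $\Leftrightarrow$ (f) closes the equivalence.

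I expect the main obstacle to be the implication (d) $\Rightarrow$ (c) — specifically, promoting the mere numerical-radius equality (or the existence of a single optimal vector) to the \emph{unitary-similarity} conclusion $A\otimes J_m\cong J_m\oplus B$. The subtlety is that knowing $J_m^*$ is a compression of $A$ along an orthonormal chain $x_0,\ldots,x_{m-1}$ is not by itself enough to split off $J_m$ as a direct summand of $A\otimes J_m$; one needs to choose the optimal vector in $\mathbb{C}^n\otimes\mathbb{C}^m$ carefully so that the corresponding $m$-dimensional reducing subspace is genuinely invariant and co-invariant. The trick, which I would carry out in detail, is to take the chain $x_0,\ldots,x_{m-1}$ in $\mathbb{C}^n$ and the standard basis $e_1,\ldots,e_m$ of $\mathbb{C}^m$ on which $J_m$ acts as a shift, and to consider the subspace of $\mathbb{C}^n\otimes\mathbb{C}^m$ spanned by the ``anti-diagonal'' vectors $\sum_{i+j=\text{const}} x_i\otimes e_j$ — this is the classical construction showing $J_m$ compresses to $J_m\otimes J_m$ — and then verify using $\|A x_i\|=\|x_i\|$ together with $\|A\|=1$ that this subspace actually reduces $A\otimes J_m$. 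The contraction-rigidity lemma (equality in a chain of norm inequalities forces isometric behavior and orthogonality) is the engine, and getting its bookkeeping exactly right across the tensor product is where the real work lies.
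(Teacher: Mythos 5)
Your cycle breaks at (b) $\Rightarrow$ (d). You assert that $W(J_m)$ is \emph{not} a circular disc centered at the origin and then invoke Theorem \ref{2.2} to conclude that $A$ must have a unitary part. But $W(J_m)$ \emph{is} exactly the closed disc $\{z:|z|\le\cos(\pi/(m+1))\}$ (the paper states this right after the theorem, citing Haagerup--de la Harpe); your parenthetical remark that its boundary meets the circle $|z|=\cos(\pi/(m+1))$ at only one point contradicts your own description of $W(J_m)$ as the disc of that radius. Consequently Theorem \ref{2.2} yields no information here --- the second alternative of its dichotomy is satisfied --- and the conclusion you draw is actually false: $A=J_n$ with $n\ge m$ satisfies (b) (Proposition \ref{2.8}) and (d) but is completely nonunitary. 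The paper instead proves (b) $\Rightarrow$ (c) directly, using the Haagerup--de la Harpe rigidity result for nilpotent contractions: since $(A\otimes J_m)^m=0$, $\|A\otimes J_m\|=1$ and $w(A\otimes J_m)=\cos(\pi/(m+1))$, the cyclic subspace generated by a maximizing vector reduces $A\otimes J_m$ and the restriction is $J_m$. Then (c) $\Rightarrow$ (d) is a one-line norm computation. Without some such ingredient your implication from (b) back to the rest of the conditions is unsupported.

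Your (d) $\Rightarrow$ (c) also has a false intermediate claim: from $\|A^{m-1}x\|=1$ you conclude that $x,Ax,\dots,A^{m-1}x$ is an \emph{orthonormal} sequence. Each vector is indeed a unit vector, but orthogonality fails in general ($A=I_n$ is the obvious counterexample), and ``orthogonality follows since $A$ is a contraction dilating a shift'' is circular. The paper's device sidesteps this entirely: set $x_j=A^{m-j}x\otimes e_j$ in $\mathbb{C}^n\otimes\mathbb{C}^m$. These are automatically orthonormal because the $e_j$ are, one checks $(A\otimes J_m)x_1=0$, $(A\otimes J_m)x_j=x_{j-1}$, and $(A\otimes J_m)^*x_m=(A^*x)\otimes(J_m^*e_m)=0$, so that $K=\mathrm{span}\{x_1,\dots,x_m\}$ reduces the norm-one operator $A\otimes J_m$ and carries a copy of $J_m$. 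Your proposed ``anti-diagonal'' subspace $\mathrm{span}\{\sum_{i+j=\mathrm{const}}x_i\otimes e_j\}$ is a different construction whose reducing property you have not verified and which in any case rests on the unproven orthogonality of the $A^jx$. Your (c) $\Rightarrow$ (a) and the addendum (d) $\Leftrightarrow$ (e) $\Leftrightarrow$ (f) are essentially correct and match the paper, except that for (c) $\Rightarrow$ (a) you should note explicitly (as the paper does) that $W(A\otimes J_m)$ is a disc centered at $0$ because $A\otimes J_m$ is unitarily similar to $e^{i\theta}(A\otimes J_m)$; convexity alone gives only $W(A\otimes J_m)\supseteq W(J_m)$, not equality.
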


\vspace{3mm}

Note that $W(J_m)=\{z\in\mathbb{C}: |z|\le\cos(\pi/(m+1))\}$ (cf. \cite[Proposition 1]{8}).

\vspace{3mm}

\begin{proof}[Proof of Theorem $\ref{2.5}$]
The implication (a) $\Rightarrow$ (b) is trivial. To prove (b) $\Rightarrow$ (c), note that $(A\otimes J_m)^m=A^m\otimes J_m^m=0_{nm}$ and $\|A\otimes J_m\|=\|A\|\|J_m\|=1$. If $x$ is a unit vector in $\mathbb{C}^n\otimes \mathbb{C}^m$ such that $|\langle (A\otimes J_m)x, x\rangle|=w(A\otimes J_m)$, then $w(A\otimes J_m)=w(J_m)=\cos(\pi/(m+1))$ implies that the subspace $K$ of $\mathbb{C}^n\otimes \mathbb{C}^m$ generated by the vectors $x, (A\otimes J_m)x, \ldots, (A\otimes J_m)^{m-1}x$ is reducing for $A\otimes J_m$, and the restriction of $A\otimes J_m$ to $K$ is unitarily similar to $J_m$ (cf. \cite[Theorem 1 (2)]{8}). Hence $A\otimes J_m$ is unitarily similar to $J_m\oplus B$, where $B$ is the restriction of $A\otimes J_m$ to $K^{\perp}$. We obviously have $w(B)\le w(A\otimes J_m)=w(J_m)$.

\vspace{3mm}

For (c) $\Rightarrow$ (d), note that $A^{m-1}\otimes J_m^{m-1}$ is unitarily similar to $J_m^{m-1}\oplus B^{m-1}$ under (c). Hence
\[\|A^{m-1}\|=\|A^{m-1}\otimes J_m^{m-1}\|=\|J_m^{m-1}\oplus B^{m-1}\|=\max\{\|J_m^{m-1}\|, \|B^{m-1}\|\}=1.\]

\vspace{3mm}

To prove (d) $\Rightarrow$ (c), let $x$ be a unit vector in $\mathbb{C}^n$ such that $\|A^{m-1}x\|=1$. Then $\|A^{m-j}x\|=1$ for all $j$, $1\le j\le m$. Let $\{e_1, \ldots, e_m\}$ be the standard basis for $\mathbb{C}^m$, let $x_j=A^{m-j}x\otimes e_j$, $1\le j\le m$, and let $K$ be the subspace of $\mathbb{C}^n\otimes \mathbb{C}^m$ generated by $x_1, \ldots, x_m$. Then $(A\otimes J_m)x_1=0$ and $(A\otimes J_m)x_j=x_{j-1}$ for $2\le j\le m$. Since $\{x_1, \ldots, x_m\}$ is an orthonormal basis of $K$, this shows that $(A\otimes J_m)K\subseteq K$ and the restriction of $A\otimes J_m$ to $K$ is unitarily similar to $J_m$. On the other hand, it follows from $\|A\otimes J_m\|=\|A\|\|J_m\|=1$ and
\[ (A\otimes J_m)^*x_m=(A^*\otimes J_m^*)(x\otimes e_m)=(A^*x)\otimes(J_m^*e_m)=(A^*x)\otimes 0=0\]
that $K$ is reducing for $A\otimes J_m$, and hence $A\otimes J_m$ is unitarily similar to $J_m\oplus B$, where $B$ is the restriction of $A\otimes J_m$ to $K^{\perp}$. Obviously, we have
\[ w(B)\le w(A\otimes J_m)\le\|A\|w(J_m)=w(J_m).\]

\vspace{3mm}

To prove (c) $\Rightarrow$ (a), note that the unitary similarity of $J_m$ and $e^{i\theta}J_m$ for all real $\theta$ implies the same for $A\otimes J_m$ and $e^{i\theta}(A\otimes J_m)$. Thus $W(A\otimes J_m)$ is a circular disc centered at the origin. (c) implies that $w(A\otimes J_m)=w(J_m)$, which means that the radii of the two circular discs $W(A\otimes J_m)$ and $W(J_m)$ are equal. Therefore, $W(A\otimes J_m)=W(J_m)$ holds.

\vspace{3mm}

Now assume that $n=m$ and that $\|A^{n-1}\|=1$. If $\|A^n\|=1$, then $p_A=\infty$ and hence $A$ has a unitary part by Proposition \ref{1.2} (a) and (c). On the other hand, if $\|A^n\|<1$, then $A$ is of class $S_n$ by \cite[Theorem 3.1]{4}. This shows that (d) $\Rightarrow$ (e). Next, if (e) is true, then $p_A=\infty$ or $n-1$ depending on whether $A$ has a unitary part or $A$ is of class $S_n$ (cf. \cite[Theorem 3.1]{4} for the latter). This proves (f). Finally, if $p_A=\infty$, then $\|A^k\|=1$ for all $k\ge 1$, and, in particular, $\|A^{n-1}\|=1$. On the other hand, if $p_A=n-1$, then $\|A^{n-1}\|=\|A\|^{n-1}=1$. This proves (f) $\Rightarrow$ (d).
\end{proof}

\vspace{3mm}

The next proposition gives a characterization of $w(A\otimes B)=\|A\|w(B)$ when $B$ is of class $S_m$.

\vspace{3mm}

\begin{proposition}\label{2.6}
Let $A$ be an $n$-by-$n$ matrix with $\|A\|=1$, and $B$ be an $S_m$-matrix. Then $w(A\otimes B)=w(B)$ if and only if either $A$ has a unitary part or $A$ is c.n.u., $\|A^{m-1}\|=1$ and $B$ is unitarily similar to $J_m$.
\end{proposition}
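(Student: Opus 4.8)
The idea is to plug Theorem~\ref{2.2} into the situation and then reduce everything to the Jordan-block case already settled in Theorem~\ref{2.5}, the only new ingredient being a rigidity statement about $S_m$-matrices with circular numerical range.

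\smallskip
\noindent\emph{Sufficiency.} If $A$ has a unitary part, then $p_A=\infty$ by Proposition~\ref{1.2}(c), and Lemma~\ref{2.1} gives $w(A\otimes B)=\|A\|w(B)=w(B)$. If instead $A$ is c.n.u.\ with $\|A^{m-1}\|=1$ and $B$ is unitarily similar to $J_m$, then $A\otimes B$ is unitarily similar to $A\otimes J_m$; since $\|A\|=1$ and $\|A^{m-1}\|=1$, the implication (d)$\Rightarrow$(b) of Theorem~\ref{2.5} yields $w(A\otimes J_m)=w(J_m)$, and as $w(B)=w(J_m)$ we conclude $w(A\otimes B)=w(B)$.

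\smallskip
\noindent\emph{Necessity.} Assume $w(A\otimes B)=w(B)$. Since $\|A\|=1$, Theorem~\ref{2.2} leaves exactly two cases: either $A$ has a unitary part, which is the first alternative in the conclusion, or $A$ is c.n.u.\ and $W(B)$ is a circular disc centered at the origin. In the second case I would invoke the rigidity fact that an $S_m$-matrix whose numerical range is a circular disc centered at the origin must be unitarily similar to $J_m$. Granting this, $B$ is unitarily similar to $J_m$, so $A\otimes B$ is unitarily similar to $A\otimes J_m$ and $w(A\otimes J_m)=w(A\otimes B)=w(B)=w(J_m)$; then the implication (b)$\Rightarrow$(d) of Theorem~\ref{2.5} gives $\|A^{m-1}\|=1$. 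Together with $A$ being c.n.u., this is precisely the second alternative.

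\smallskip
\noindent\emph{The main point.} What remains, and is the genuine obstacle, is the rigidity claim: if $B$ is an $S_m$-matrix and $W(B)=\{z:|z|\le r\}$, then $r=\cos(\pi/(m+1))$ and $B$ is unitarily similar to $J_m$. I would argue via the Poncelet property of $S_m$-matrices: for each $\lambda\in\partial\mathbb{D}$ there is an $(m+1)$-gon with all vertices on $\partial\mathbb{D}$, one of them equal to $\lambda$, each of whose edges meets $W(B)$ in a single point and is therefore a supporting line of $W(B)$, hence tangent to the circle $|z|=r$. Each such edge is a chord of $\partial\mathbb{D}$ at distance $r$ from the origin, so it subtends the central angle $2\arccos r$; summing over the $m+1$ edges gives $(m+1)\arccos r=\pi$, forcing $r=\cos(\pi/(m+1))=w(J_m)$ and making every circumscribing $(m+1)$-gon a regular one. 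Thus $W(B)=W(J_m)$. It then remains to see that, among $S_m$-matrices --- equivalently, among compressions $S(\Phi)$ of the shift with $\Phi$ a degree-$m$ Blaschke product --- the disc of radius $\cos(\pi/(m+1))$ centered at the origin is attained as numerical range only when $\Phi(z)=cz^m$ with $|c|=1$, in which case $S(\Phi)$ is unitarily similar to $J_m$. This last step I would either quote from the literature on numerical ranges of $S_n$-matrices, or prove directly by noting that the unique $(m+1)$-dimensional unitary dilations of $B$ and of $J_m$ furnished by the Poncelet theorem (at a common vertex $\lambda$) are normal with the same spectrum, hence unitarily similar, and then tracking the codimension-one compression that recovers $B$.
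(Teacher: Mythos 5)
Your proof is correct and follows essentially the same route as the paper: Theorem~\ref{2.2} reduces the necessity to the circular-disc case, the rigidity of $S_m$-matrices with circular numerical range centered at the origin (the paper's Lemma~\ref{2.7}, which it also proves via the Poncelet property plus a citation of \cite[Theorem 3.2]{2} for the final identification with $J_m$) gives $B\cong J_m$, and Theorem~\ref{2.5} handles both $\|A^{m-1}\|=1$ and the converse. Your explicit angle-sum computation $(m+1)\arccos r=\pi$ is a nice concrete version of the paper's ``geometrical consideration.''
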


\vspace{3mm}

Its proof depends on a special property of $S_n$-matrices. The following lemma is from \cite[Lemma 5]{19}. Here we give a shorter geometric proof.

\vspace{3mm}

\begin{lemma}\label{2.7}
Let $A$ be an $S_n$-matrix. Then $W(A)$ is a circular disc centered at the origin if and only if $A$ is unitarily similar to $J_n$.
\end{lemma}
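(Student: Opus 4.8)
\textbf{Proof proposal for Lemma \ref{2.7}.}

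The plan is to prove both directions, with the nontrivial work concentrated in the ``only if'' part. The ``if'' direction is essentially already recorded: $W(J_n)=\{z\in\mathbb{C}:|z|\le\cos(\pi/(n+1))\}$ is a circular disc centered at the origin, and $J_n$ is indeed an $S_n$-matrix (it is a nilpotent contraction with $\rank(I_n-J_n^*J_n)=1$), so nothing more is needed there. For the ``only if'' direction, suppose $A$ is of class $S_n$ and $W(A)$ is the disc $\{|z|\le r\}$, where necessarily $r=w(A)$. The first key observation is that $r=\cos(\pi/(n+1))$: since $A$ is an $S_n$-matrix, $\partial W(A)$ has the Poncelet property with respect to $(n+1)$-by-$(n+1)$ unitary dilations, and a circular disc that is inscribed in an $(n+1)$-gon cyclically inscribed in the unit circle $\partial\mathbb{D}$ forces the $(n+1)$-gon to be regular (by the equality case of an isoperimetric-type inequality for inscribed polygons, or directly: the smallest circumscribed circle of a regular $(n+1)$-gon inscribed in $\partial\mathbb{D}$ has radius $\cos(\pi/(n+1))$, and among all cyclically inscribed $(n+1)$-gons the incircle radius is maximized, and the maximum value $\cos(\pi/(n+1))$ is attained only by the regular one). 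Hence $w(A)=\cos(\pi/(n+1))=w(J_n)$.

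Having identified $w(A)=w(J_n)$, I would then invoke the structure theory of $S_n$-matrices attaining this numerical radius. An $S_n$-matrix $A$ is nilpotent (all eigenvalues lie in $\mathbb{D}$ and $A$ has a single nontrivial Jordan-type structure forced by $\rank(I_n-A^*A)=1$), in fact $A^n=0$, and $\|A\|=1$. The hypothesis $w(A)=\cos(\pi/(n+1))=w(J_n)$ together with $A^n=0$ and $\|A\|=1$ puts us exactly in the situation of \cite[Theorem 1 (2)]{8} as used in the proof of Theorem \ref{2.5}: taking a unit vector $x$ with $|\langle Ax,x\rangle|=w(A)$, the subspace $K$ generated by $x, Ax,\ldots,A^{n-1}x$ is reducing for $A$ and $A|_K$ is unitarily similar to $J_n$. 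Since $\dim K\le n=\dim\mathbb{C}^n$, we get $K=\mathbb{C}^n$ and therefore $A$ is unitarily similar to $J_n$.

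The main obstacle, and the step needing the most care, is the geometric fact that a cyclically inscribed $(n+1)$-gon in $\partial\mathbb{D}$ whose incircle is centered at the origin must be regular, equivalently that the value $\cos(\pi/(n+1))$ is attained as the incircle radius only by the regular polygon. One clean way to handle this is to use the Poncelet description directly: each edge of the $(n+1)$-gon $\partial W(U)$ is tangent to $W(A)=\{|z|\le r\}$, so each edge lies at distance exactly $r$ from the origin; if the vertices on $\partial\mathbb{D}$ are $e^{i\theta_0},\ldots,e^{i\theta_n}$ in cyclic order, the chord joining consecutive vertices $e^{i\theta_{j}}$ and $e^{i\theta_{j+1}}$ has distance $\cos\big((\theta_{j+1}-\theta_j)/2\big)$ from the origin, so all the central angles $\theta_{j+1}-\theta_j$ are equal, forcing each to be $2\pi/(n+1)$ and $r=\cos(\pi/(n+1))$. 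This reduces the ``obstacle'' to the elementary chord-distance computation, and avoids any appeal to a general inequality. Once $r=\cos(\pi/(n+1))$ is in hand, the remaining deduction is a direct citation of the already-used results on Jordan blocks, so the proof closes quickly.
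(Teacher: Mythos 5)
Your treatment of the first half of the ``only if'' direction is correct and in fact makes explicit the ``geometrical consideration'' that the paper leaves implicit: since $W(A)\subseteq W(U)$ and each edge of the circumscribing $(n+1)$-gon meets the disc $\{|z|\le r\}$ in exactly one point, each edge line is tangent to the disc, the chord-distance formula $\cos\bigl((\theta_{j+1}-\theta_j)/2\bigr)=r$ forces all central angles to equal $2\pi/(n+1)$, and hence $r=\cos(\pi/(n+1))$. That is a clean and complete way to pin down the radius, arguably better than the paper's one-line appeal to geometry.

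The second half, however, has a genuine gap. You assert that every $S_n$-matrix is nilpotent with $A^n=0$, ``forced by $\rank(I_n-A^*A)=1$.'' This is false: the definition only requires the eigenvalues to lie in the open disc $\mathbb{D}$, not to vanish. For instance, $\left[\begin{smallmatrix} a & 1-|a|^2 \\ 0 & a\end{smallmatrix}\right]$ with $0<|a|<1$ is of class $S_2$ and is not nilpotent. Since the Haagerup--de la Harpe theorem \cite[Theorem 1]{8} and its equality case apply only to nilpotent contractions, your invocation of \cite[Theorem 1 (2)]{8} is not justified as written; knowing $w(A)=\cos(\pi/(n+1))$ alone does not put you in its hypotheses. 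You would either have to first prove that an $S_n$-matrix whose numerical range is a disc centered at the origin is nilpotent (which is not immediate --- it is essentially the content of the lemma), or do what the paper does: having shown $W(A)=W(J_n)$, cite the fact that an $S_n$-matrix is determined up to unitary similarity by its numerical range (\cite[Theorem 3.2]{2}). With that citation in place of the nilpotency claim, your argument closes; without it, the key step is missing.
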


\vspace{1pt}

\begin{proof}
If $W(A)$ is as asserted, then the Poncelet property of $W(A)$ says that it is circumscribed by $(n+1)$-gons with vertices on the unit circle. As the circular disc $\{z\in\mathbb{C}: |z|\le\cos(\pi/(n+1))\} (=W(J_n))$ is circumscribed by any regular $(n+1)$-gon on the unit circle, if the radius of $W(A)$ is not equal to $\cos(\pi/(n+1))$, then we infer from a geometrical consideration that $W(A)$ cannot have the Poncelet property. Thus $W(A)$ must equal $W(J_n)$. The unitary similarity of $A$ and $J_n$ then follows from \cite[Theorem 3.2]{2}. The converse is trivial.
\end{proof}

\vspace{1mm}

\begin{proof}[Proof of Proposition $\ref{2.6}$]
If $w(A\otimes B)=w(B)$, then, by Theorem \ref{2.2}, either $A$ has a unitary part or $A$ is c.n.u. and $W(B)$ is a circular disc centered at the origin. In the latter case, Lemma \ref{2.7} yields the unitary similarity of $B$ and $J_m$, and then Theorem \ref{2.5} gives $\|A^{m-1}\|=1$. The converse also follows from Theorem \ref{2.5}.
\end{proof}

\vspace{3mm}

Note that, under the conditions of Proposition \ref{2.6}, if $A$ is c.n.u., then we automatically have $m\le n$. This is because if, otherwise, $m>n$, then $\|A^{m-1}\|=1$ yields, by Proposition \ref{1.2} (a) and (c), that $A$ has a unitary part.

\vspace{3mm}

A specific example of the results obtained so far is in the next proposition.

\vspace{3mm}

\begin{proposition}\label{2.8}
Let $n$ and $m$ be positive integers. Then $W(J_n\otimes J_m)=W(J_{\ell})$, where $\ell=\min\{n, m\}$, and thus $w(J_n\otimes J_m)=\min\{w(J_n), w(J_m)\}$.
\end{proposition}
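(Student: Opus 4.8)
The plan is to assume without loss of generality that $n \le m$, so that $\ell = n$, and to show $W(J_n \otimes J_m) = W(J_n)$. The containment $W(J_n \otimes J_m) \subseteq W(J_n)$ is immediate from Proposition~\ref{1.1}: since $\|J_n\| = 1$ we get $w(J_n \otimes J_m) \le \|J_n\| w(J_m) = w(J_m)$, but this only gives the wrong bound. The better route is to use $\|J_m\| = 1$ and write $w(J_n \otimes J_m) \le \|J_m\| w(J_n) = w(J_n) = \cos(\pi/(n+1))$; combined with the fact (already noted in the proof of (c) $\Rightarrow$ (a) in Theorem~\ref{2.5}) that $J_n \otimes J_m$ is unitarily similar to $e^{i\theta}(J_n \otimes J_m)$ for all real $\theta$ — because $J_m$ has that property — we conclude $W(J_n \otimes J_m)$ is a circular disc centered at the origin of radius at most $\cos(\pi/(n+1))$.

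For the reverse inclusion, I would invoke Theorem~\ref{2.5} directly with $A = J_n$ and the $J_m$ there playing its stated role. We need condition~(d) of that theorem, namely $\|A^{m-1}\| = 1$ with $A = J_n$: but $\|J_n^{m-1}\| = 1$ precisely when $m - 1 \le n - 1$, i.e. $m \le n$. Since we have arranged $n \le m$, this forces us instead to apply Theorem~\ref{2.5} in the symmetric form, using the unitary similarity $J_n \otimes J_m \cong J_m \otimes J_n$: now take $A = J_m$ (size $m$) tensored with $J_n$ (playing the role of ``$J_m$'' in Theorem~\ref{2.5}, with parameter $n$), and check condition~(d): $\|J_m^{\,n-1}\| = 1$ iff $n - 1 \le m - 1$, which holds. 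Hence by Theorem~\ref{2.5}(d)$\Rightarrow$(a), $W(J_m \otimes J_n) = W(J_n)$, and therefore $W(J_n \otimes J_m) = W(J_n) = W(J_\ell)$.

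The numerical radius statement then follows at once: $w(J_n \otimes J_m) = w(J_\ell) = \cos(\pi/(\ell+1)) = \min\{\cos(\pi/(n+1)), \cos(\pi/(m+1))\} = \min\{w(J_n), w(J_m)\}$, using that $\cos(\pi/(k+1))$ is increasing in $k$ together with $W(J_k) = \{z : |z| \le \cos(\pi/(k+1))\}$.

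The only real subtlety — and the step I expect needs the most care — is bookkeeping the roles of $n$ and $m$ when quoting Theorem~\ref{2.5}: that theorem is stated for $A \otimes J_m$ with a distinguished Jordan-block factor, so one must be careful to put the \emph{smaller} Jordan block in the ``$J_m$''-slot (where it governs the target numerical range) and the \emph{larger} one in the ``$A$''-slot (where only $\|A^{m-1}\| = 1$ is needed), after first applying the commutativity $J_n \otimes J_m \cong J_m \otimes J_n$ of the tensor product up to unitary similarity. Everything else is a direct citation.
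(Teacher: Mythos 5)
Your proof is correct, and the bookkeeping you flag at the end is in fact handled properly: after using $J_n\otimes J_m\cong J_m\otimes J_n$, taking $A=J_m$ and the distinguished Jordan factor to be $J_n$ makes condition (d) of Theorem \ref{2.5} read $\|J_m^{\,n-1}\|=1$, which holds precisely because $n\le m$. Your route differs from the paper's in one respect. You obtain the full equality $W(J_m\otimes J_n)=W(J_n)$ by citing the implication (d) $\Rightarrow$ (a) of Theorem \ref{2.5}, so the proposition becomes a pure corollary of that theorem; note that this also makes your first paragraph (the upper bound from Proposition \ref{1.1} and the circular-disc observation) redundant, since (d) $\Rightarrow$ (a) already delivers equality of numerical ranges outright. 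The paper instead proves the lower bound directly and concretely: with its convention $m\le n$, it exhibits $J_m$ as the principal submatrix of $J_n\otimes J_m$ on rows and columns $1,\,m+2,\,2m+3,\ldots,(m-1)m+m$, so that $J_n\otimes J_m$ dilates $J_m$ and $w(J_m)\le w(J_n\otimes J_m)$; it then combines this with Proposition \ref{1.1} and the circular-disc remark to pass from equal radii to equal ranges. The two arguments are close in substance --- the (d) $\Rightarrow$ (c) step inside Theorem \ref{2.5} builds essentially the same embedded copy of the smaller Jordan block --- but the paper's version is self-contained and exhibits the dilation explicitly, while yours trades that for a clean citation. The only loose end in your version is the degenerate case $\min\{n,m\}=1$, where $J_1=[0]$ has norm $0$ and the normalization $\|A\|=1$ in Theorem \ref{2.5} is not literally available; this case is trivial ($J_n\otimes J_1=0$), and the paper glosses over it as well.
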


\vspace{1pt}

\begin{proof}
Assume that $m\le n$. Since the principal submatrix of $J_n\otimes J_m$ formed by its rows and columns numbered $1, m+2, 2m+3, \ldots$, and $(m-1)m+m$ is $J_m$, we have that $J_n\otimes J_m$ is a dilation of $J_m$. Thus $w(J_m)\le w(J_n\otimes J_m)$. The reversed inequality $w(J_n\otimes J_m)\le\|J_n\|w(J_m)=w(J_m)$ is by Proposition \ref{1.1}. Therefore, $w(J_n\otimes J_m)=w(J_m)$ holds. As was seen in the proof of (c) $\Rightarrow$ (a) in Theorem \ref{2.5}, $W(J_n\otimes J_m)$ is a circular disc centered at the origin. Thus the equality of $w(J_n\otimes J_m)$ and $w(J_m)$ implies that of $W(J_n\otimes J_m)$ and $W(J_m)$.
\end{proof}

\vspace{3mm}

Besides $S_n$-matrices, another generalization of the Jordan blocks is the companion matrices. Recall that a companion matrix is one of the form
\[\left[ \begin{array}{ccccccc}
0 & 1 &  &  &  &  &  \\
  & 0 & 1&  &  &  &  \\
  &   & \cdot & \cdot &  &  &  \\
  &   &  & \cdot & \cdot &  &  \\
  &   &   &  & \cdot & \cdot &  \\
  &   &  &  &  & 0 & 1 \\
-a_n & -a_{n-1} & \cdot & \cdot & \cdot & -a_2 & -a_1
\end{array} \right],\]
whose characteristic and minimal polynomials are both equal to $z^n+\sum_{j=1}^na_jz^{n-j}$. The numerical ranges of such matrices have been studied in \cite{5,6,1}.

\vspace{3mm}

\begin{proposition}\label{2.9}
Let $A$ be an $n$-by-$n$ $(n\ge 2)$ companion matrix. Then the following conditions are equivalent:
\begin{enumerate}
\item[\rm (a)] $w(A\otimes A)=\|A\|w(A)$,
\item[\rm (b)] $A$ is unitary, $A=J_n$, or $A$ is unitarily similar to a direct sum $[a\omega_n^j]\oplus B$, where $|a|>1$, $\omega_n=e^{i(2\pi/n)}$, $0\le j\le n-1$, and $B$ is an $S_{n-1}$-matrix with eigenvalues $(1/\overline{a})\omega_n^k$, $0\le k\le n-1$ and $k\neq j$, and
\item[\rm (c)] $p_A=n_A=\infty$ or $n-1$.
\end{enumerate}
\end{proposition}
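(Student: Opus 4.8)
The plan is to establish the cycle (a) $\Rightarrow$ (b) $\Rightarrow$ (c) $\Rightarrow$ (a), using the structure of companion matrices together with Theorem \ref{2.5} and Proposition \ref{1.2}. First I would normalize: scaling $A$ does not affect any of the three conditions, so after possibly dividing by $\|A\|$ we may assume $\|A\|=1$; note, however, that for companion matrices $\|A\|\ge 1$ always (the last row has norm $\ge\|A\|$ forces nothing, but the superdiagonal of $1$'s gives $\|A\|\ge 1$ directly since $Ae_2=e_1$ plus the companion entry). The key preliminary observation is that for an $n$-by-$n$ companion matrix $A$, we have $n_A = n-1$ when $A$ is nilpotent (equivalently $A=J_n$, by Proposition \ref{1.3}(b), since the minimal polynomial is $z^n$), and $n_A=\infty$ otherwise; likewise, by Proposition \ref{1.2}(a,b), $p_A\in\{1,\dots,n-1\}$ or $p_A=\infty$, and $p_A=n-1$ exactly when $A$ is a nonzero multiple of an $S_n$-matrix. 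Since we have normalized $\|A\|=1$, the case $p_A=n-1$ means $A$ is of class $S_n$.

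For (a) $\Rightarrow$ (b): apply Theorem \ref{2.5} with $B=A$ and $m=n$. Condition (a), i.e. $w(A\otimes A)=\|A\|w(A)=w(A)$, is (after the normalization) the hypothesis (b) of Theorem \ref{2.5}, so we get $p_A=\infty$ or $p_A=n-1$. If $p_A=\infty$, then by Proposition \ref{1.2}(c) $A$ has a unitary part; but a companion matrix is nonderogatory (its minimal polynomial equals its characteristic polynomial), and a direct-sum decomposition with a unitary summand would split the minimal polynomial, forcing the "nonunitary part" to be absent — hence $A$ is unitary, or, if $A$ is $J_n$ after normalization, we land in that branch. More carefully: $p_A=\infty$ and $\|A\|=1$ give, via Proposition \ref{1.2}(c)(4), a unimodular eigenvalue $\lambda$; nonderogatoriness then forces all eigenvalues to be distinct and, since $A$ is a contraction with $\rho(A)=\|A\|=1$, forces $A$ to be diagonalizable with all eigenvalues unimodular, i.e. unitary. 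If instead $p_A=n-1$, then $A$ is an $S_n$-matrix; combining with the companion structure, the eigenvalues $\mu_1,\dots,\mu_n$ lie in $\mathbb D$. Here I would invoke the known description of companion matrices that are of class $S_n$ (from \cite{5,6,1}): up to the normalization, either $A=J_n$ (all eigenvalues $0$), or the eigenvalues are $(1/\bar a)\omega_n^k$ for $k\ne j$ for some $|a|>1$ — this is exactly the shape in (b) with the extra eigenvalue $a\omega_n^j$ removed; un-normalizing restores the summand $[a\omega_n^j]$ with $|a|>1$ and the $S_{n-1}$-block $B$.

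For (b) $\Rightarrow$ (c): in each of the three listed cases compute $p_A$ and $n_A$ directly. If $A$ is unitary, $\|A^k\|=\|A\|^k$ for all $k$ and $A$ is not nilpotent, so $p_A=n_A=\infty$. If $A=J_n$, then $p_A=n-1$ by Proposition \ref{1.2}(b) and $n_A=n-1$ by Proposition \ref{1.3}(b). If $A$ is unitarily similar to $[a\omega_n^j]\oplus B$ with $|a|>1$ and $B$ of class $S_{n-1}$: the eigenvalue $a\omega_n^j$ has modulus $>1$, so $\rho(A)=|a|=\|A\|$ (the norm of a $1\times 1$ block $[a\omega_n^j]$ is $|a|$, and $B$ being a contraction has norm $\le 1<|a|$), hence $p_A=\infty$ by Proposition \ref{1.2}(c); and $A$ is not nilpotent, so $n_A=\infty$. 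Thus (c) holds. For (c) $\Rightarrow$ (a): if $p_A=\infty$, then Lemma \ref{2.1} gives $w(A\otimes B)=\|A\|w(B)$ for every $B$, in particular $B=A$, which is (a). If $p_A=n-1$, then by Proposition \ref{1.2}(b) $A$ is a nonzero multiple $cJ_n$ of $J_n$; but (c) also says $n_A=n-1$, which is automatic here. Then $w(A\otimes A)=|c|^2 w(J_n\otimes J_n)=|c|^2 w(J_n)$ by Proposition \ref{2.8}, while $\|A\|w(A)=|c|\cdot|c|w(J_n)=|c|^2 w(J_n)$, so (a) holds.

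The main obstacle I anticipate is the middle of (a) $\Rightarrow$ (b): translating "$A$ is a companion matrix of class $S_n$" into the explicit eigenvalue pattern $(1/\bar a)\omega_n^k$ together with the separated eigenvalue $a\omega_n^j$. This requires knowing (and citing precisely) the classification of $S_n$-companion matrices from \cite{5,6,1} — essentially that a companion matrix has a one-dimensional defect space iff its constant-term coefficient has modulus matching the product $\prod(1/\bar\mu_i)$ over the in-disc eigenvalues, which pins down the eigenvalues to roots of a Blaschke-type equation $z^n = $ const. I would also need to handle the degenerate sub-case $a\omega_n^j$ hitting the unit circle (excluded by $|a|>1$) and confirm that the $S_{n-1}$-block is itself forced to be in companion-like form with the stated spectrum; this bookkeeping, rather than any deep idea, is where the proof's weight lies.
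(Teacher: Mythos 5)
Your directions (b) $\Rightarrow$ (c) and (c) $\Rightarrow$ (a) essentially match the paper's and are fine (in (c) $\Rightarrow$ (a), Proposition \ref{1.2}(b) only gives that $A$ is a multiple of an $S_n$-matrix; what actually forces $A=J_n$ is that $n_A=n-1$ makes the companion matrix nilpotent, hence its characteristic polynomial is $z^n$). The gap is in (a) $\Rightarrow$ (b). You identify condition (a), after normalization, with condition (b) of Theorem \ref{2.5}; but Theorem \ref{2.5}(b) reads $w(A\otimes J_m)=w(J_m)$ --- a tensor with the Jordan block --- not $w(A\otimes A)=w(A)$. These are genuinely different, and the conclusion you extract, $p_A=\infty$ or $n-1$, is strictly weaker than (a): the companion matrix of $z^n+c$ with $0<|c|<1$ is an $S_n$-matrix (here $I_n-A^*A=(1-|c|^2)e_1e_1^T$ has rank one and the eigenvalues, the $n$-th roots of $-c$, lie in $\mathbb{D}$), so $p_A=n-1$, yet it is not unitary, not $J_n$, and not of the form $[a\omega_n^j]\oplus B$ with $|a|>1$. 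Hence ``$p_A\in\{\infty,n-1\}$'' cannot imply (b), and your subsequent case analysis inherits the defect: in the $p_A=\infty$ branch you conclude $A$ is unitary via nonderogatoriness, but a nonderogatory matrix is perfectly capable of being similar to $[a]\oplus B$ with $a\notin\sigma(B)$ (the minimal polynomial $(z-a)m_B(z)$ can still equal the characteristic polynomial); indeed the third alternative of (b) is exactly such a decomposition and has $p_A=\infty$, so it belongs to this branch, not to the $p_A=n-1$ branch where you place it.

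The paper's route is different and is what makes the implication work: apply Theorem \ref{2.2} to $A'=A/\|A\|$ to get the dichotomy ``$A'$ has a unitary part'' versus ``$A'$ is c.n.u.\ and $W(A)$ is a circular disc centered at the origin,'' and then invoke the companion-matrix results of \cite{5} and \cite{1}: in the first case $A$ is either unitary or unitarily similar to $[a\omega_n^j]\oplus B$ with the stated spectrum, with $\rank(I_{n-1}-B^*B)=1$ forcing $B$ to be of class $S_{n-1}$; in the second case the circular-disc condition forces $A=J_n$. The circular-disc information supplied by Theorem \ref{2.2} is precisely what your reduction to $p_A\in\{\infty,n-1\}$ discards, and it is what rules out the $z^n+c$ examples above. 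A secondary point: dividing by $\|A\|$ destroys the companion structure, so claims such as ``$A$ is a companion matrix of class $S_n$'' must be made for the original $A$, as the paper is careful to do.
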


\vspace{1pt}

\begin{proof}
To prove (a) $\Rightarrow$ (b), let $A'=A/\|A\|$. Then (a) gives $w(A'\otimes A')=w(A')$. By Theorem \ref{2.2}, either $A'$ has a unitary part or it is c.n.u. with numerical range a circular disc centered at the origin. In the former case, either $A$ is normal or is unitarily similar to a matrix of the form $[a\omega_n^j]\oplus B$, where $|a|=\|A\|\ge 1$ and $B$ is of size $n-1$ with eigenvalues $(1/\overline{a})\omega_n^k$, $0\le k\le n-1$ and $k\neq j$ (cf. \cite[Theorem 1.1 and Corollary 1.3]{5}). If $A$ is normal or $|a|=1$, then $A$ is unitary by \cite[Corollary 1.2]{5}. Hence we may assume that $|a|>1$. Thus the eigenvalues of $B$ are all contained in $\mathbb{D}$. Moreover, by \cite[Theorem 2.1]{1}, we have $\rank(I_{n-1}-B^*B)=1$. These two together imply, by way of the singular value decomposition of $B$, that $\|B\|=1$. Hence $B$ is of class $S_{n-1}$. On the other hand, if it is the latter case, then $W(A)$ is also a circular disc centered at the origin. Therefore, $A=J_n$ by \cite[Theorem 2.9]{5}. This proves (b).

\vspace{3mm}

For (b) $\Rightarrow$ (c), if $A$ is unitary (resp., $A=J_n$), then, obviously, $p_A=n_A=\infty$ (resp., $p_A=n_A=n-1$). On the other hand, if $A$ is unitarily similar to the asserted $[a\omega_n^j]\oplus B$, then $\|A\|=\max\{|a|, \|B\|\}=|a|=\rho(A)$. Thus $p_A=n_A=\infty$ by Proposition \ref{1.2} (c) and \ref{1.3}.

\vspace{3mm}

Finally, for (c) $\Rightarrow$ (a), if $p_A=n_A=\infty$, then (a) is a  consequence of Lemma \ref{2.1}. On the other hand, if $p_A=n_A=n-1$, then $A^n=0_n$. This implies that $A=J_n$ and thus (a) holds by Proposition \ref{2.8}.
\end{proof}

\vspace{3mm}

The next theorem is a consequence of Theorem \ref{2.5}. It gives a lower bound, in terms of $p_A$, for $w(A)$ when $A$ is an $n$-by-$n$ matrix with $\|A\|=1$.

\vspace{3mm}

\begin{theorem}\label{2.10}
If $A$ is an $n$-by-$n$ matrix with $\|A\|=\|A^k\|=1$ for some $k\ge 1$, then $w(A)\ge\cos(\pi/(k+2))$. Moreover, in this case, the following conditions are equivalent:
\begin{enumerate}
\item[\rm (a)] $w(A)=\cos(\pi/(k+2))$,
\item[\rm (b)] $A$ is unitarily similar to $J_{k+1}\oplus B$, where $B$ is a finite matrix with $w(B)\le\cos(\pi/(k+2))$, and
\item[\rm (c)] $W(A)=\{z\in\mathbb{C}: |z|\le\cos(\pi/(k+2))\}$.
\end{enumerate}
\end{theorem}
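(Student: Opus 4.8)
The plan is to settle the inequality and the two cheap equivalences first, and then to reduce everything to the single substantive implication (a)$\Rightarrow$(b). For the inequality, apply Theorem~\ref{2.5} with $m=k+1$: since $\|A^{(k+1)-1}\|=\|A^{k}\|=1$, its condition (d) holds, hence so does (b), i.e. $w(A\otimes J_{k+1})=w(J_{k+1})=\cos(\pi/(k+2))$; Proposition~\ref{1.1} gives $w(A\otimes J_{k+1})\le\|J_{k+1}\|\,w(A)=w(A)$, so $w(A)\ge\cos(\pi/(k+2))$. The implication (c)$\Rightarrow$(a) is immediate, and (b)$\Rightarrow$(c) holds because $W(J_{k+1}\oplus B)=\mathrm{conv}\,\bigl(W(J_{k+1})\cup W(B)\bigr)$ while $W(B)\subseteq\{z:|z|\le w(B)\}\subseteq\{z:|z|\le\cos(\pi/(k+2))\}=W(J_{k+1})$, so $W(A)=W(J_{k+1})$. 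Thus it remains to prove (a)$\Rightarrow$(b); then (a)$\Rightarrow$(c) follows from (b)$\Rightarrow$(c).

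Assume now (a). First I would record the disc: by the computation above $w(J_{k+1}\otimes A)=w(A\otimes J_{k+1})=w(J_{k+1})=w(A)$, and since $\|J_{k+1}\|=1$ while $J_{k+1}$ (a nonzero nilpotent) is completely nonunitary, Theorem~\ref{2.2} applied to the pair $(J_{k+1},A)$ forces $W(A)$ to be a circular disc centered at the origin; as $w(A)=\cos(\pi/(k+2))$, this gives $W(A)=\{z:|z|\le\cos(\pi/(k+2))\}$, which is (c). Note the same argument shows that \emph{any} matrix $C$ with $\|C\|=\|C^{k}\|=1$ and $w(C)=\cos(\pi/(k+2))$ has $W(C)$ equal to that disc; I will use this on a piece of $A$.

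To extract the Jordan summand, pick a unit vector $x$ with $\|A^{k}x\|=1$; then $\|A^{j}x\|=1$ for $0\le j\le k$, and since $\|A\|\le1$ each equality $\|A(A^{j}x)\|=\|A^{j}x\|$ yields $A^{*}A^{j+1}x=A^{j}x$ for $0\le j\le k-1$. Let $H_{0}=\bigvee_{j\ge0}A^{j}x$ and $A_{0}=A|_{H_{0}}$, a cyclic matrix with $\|A_{0}\|=\|A_{0}^{k}\|=1$ and $\|A_{0}^{k+1}\|\le\|A^{k+1}\|<1$ (the last because $\|A^{k+1}\|=1$ would force, via the inequality already proved applied with $k+1$ in place of $k$, $w(A)\ge\cos(\pi/(k+3))>\cos(\pi/(k+2))$). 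Since $w(A_{0})\le w(A)=\cos(\pi/(k+2))$ and, by that inequality applied to $A_{0}$, $w(A_{0})\ge\cos(\pi/(k+2))$, we get $w(A_{0})=\cos(\pi/(k+2))$, hence $W(A_{0})=\{z:|z|\le\cos(\pi/(k+2))\}$ by the remark above. A cyclic matrix whose numerical range is a circular disc centered at the origin is unitarily similar to the Jordan block of its own size $d=\dim H_{0}$ (this is where one puts $A_{0}$ in the appropriate canonical form and invokes \cite{5} or Lemma~\ref{2.7}); then $\|A_{0}^{k}\|=1$ forces $d\ge k+1$ and $\|A_{0}^{k+1}\|<1$ forces $d\le k+1$, so $A_{0}\cong J_{k+1}$. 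Because $\|A_{0}^{k}x\|=\|A_{0}^{k}\|\,\|x\|$ and $A_{0}\cong J_{k+1}$, the vectors $x,Ax,\dots,A^{k}x$ form an orthonormal basis of $H_{0}$ and $A^{k+1}x=0$. It then remains to see that $H_{0}$ reduces $A$: we have $A^{*}(A^{j}x)=A^{j-1}x\in H_{0}$ for $1\le j\le k$, and from $\langle A^{*}x,A^{j}x\rangle=\langle x,A^{j+1}x\rangle$ one checks $A^{*}x\perp A^{j}x$ for all $0\le j\le k$, so it suffices to prove $A^{*}x=0$. If $A^{*}x\ne0$, put $w=A^{*}x/\|A^{*}x\|$ and compress $A$ to $K=H_{0}\oplus\mathbb{C}w$; in the orthonormal basis $(A^{k}x,A^{k-1}x,\dots,x,w)$ this $(k+2)$-by-$(k+2)$ compression is a weighted shift with consecutive weights $1,\dots,1$ ($k$ of them) followed by $\|A^{*}x\|>0$, plus a single diagonal entry in the last slot. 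Feeding it the maximizing vector of $\re J_{k+1}$ (whose coordinates are all positive) with a small positive coordinate appended makes the real part of the quadratic form exceed $\cos(\pi/(k+2))$; since a compression cannot increase the numerical radius, this contradicts $w(A)=\cos(\pi/(k+2))$. Hence $A^{*}x=0$, $H_{0}$ reduces $A$, and $A\cong J_{k+1}\oplus B$ with $B=A|_{H_{0}^{\perp}}$ and $w(B)\le w(A)=\cos(\pi/(k+2))$, which is (b).

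I expect the hard part to be entirely inside (a)$\Rightarrow$(b), and specifically the passage ``disc numerical range of radius $\cos(\pi/(k+2))$ together with $\|A_{0}\|=\|A_{0}^{k}\|=1$, $\|A_{0}^{k+1}\|<1$ $\Longrightarrow$ $A_{0}\cong J_{k+1}$''. Two things need care there: the numerical range is only a unitary invariant, whereas cyclicity of $A_{0}$ only guarantees similarity to a companion matrix, so the structure theorem must be applied after the right reduction; and the perturbation estimate showing that the weighted-shift compression has numerical radius strictly above $\cos(\pi/(k+2))$ relies on the positivity of every coordinate of the maximizing eigenvector of $\re J_{k+1}$. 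The inequality, the easy equivalences, and the step (a)$\Rightarrow$(c) via Theorem~\ref{2.2} should be routine once Theorems~\ref{2.2} and~\ref{2.5} are in hand.
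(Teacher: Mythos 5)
Your overall architecture is sound: the inequality via Theorem~\ref{2.5} and Proposition~\ref{1.1}, the trivial implications, the direct route (a)$\Rightarrow$(c) through Theorem~\ref{2.2} applied to the pair $(J_{k+1},A)$ (a shortcut the paper does not take, since it derives (c) from (b)), and the final reducing-subspace step --- your perturbation argument with the appended small coordinate is in substance the paper's Lemma~\ref{2.11}, specialized to the case at hand and proved more directly. But the central step of (a)$\Rightarrow$(b) has a genuine gap: the assertion that a cyclic matrix whose numerical range is a circular disc centered at the origin is unitarily similar to the Jordan block of its own size is false. A weighted shift with non-constant weights, e.g. $\left[\begin{smallmatrix}0&1&0\\0&0&1/2\\0&0&0\end{smallmatrix}\right]$, is cyclic, has circular-disc numerical range centered at $0$ and norm $1$, yet is not unitarily similar to $J_3$. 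Lemma~\ref{2.7} requires $A_0$ to be of class $S_{k+1}$, i.e. $\rank(I-A_0^*A_0)=1$ and $\dim H_0=k+1$, neither of which you have established; cyclicity only gives \emph{similarity} to a companion matrix, and the numerical range is not a similarity invariant, so the companion-matrix results of \cite{5} do not apply either. You flag exactly this as the part ``needing care'' but never supply the argument, and everything downstream ($\dim H_0=k+1$, orthonormality of $x,Ax,\dots,A^kx$, and above all $A^{k+1}x=0$) hangs on it.

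The missing idea is precisely how the paper establishes $A^{k+1}x=0$: assuming $A^{k+1}x\neq 0$, it builds an explicit unit vector $u_t\in\mathbb{C}^{k+2}\otimes\mathbb{C}^n$ out of $A^{k+1}x,A^kx,\dots,x$ weighted by the entries of the maximizing vector of $\re J_{k+1}$ and a free parameter $t$, computes $\langle(J_{k+2}\otimes A)u_t,u_t\rangle=t\sqrt{1-t^2}\,c\,\|A^{k+1}x\|+t^2\cos(\pi/(k+2))$ with $c>0$, and lets $t\to 1^-$ to exceed $\cos(\pi/(k+2))=w(A)\ge w(J_{k+2}\otimes A)$, a contradiction. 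Once $A^{k+1}x=0$ is in hand, the relations $A^*A^{j+1}x=A^jx$ give orthonormality of $x,\dots,A^kx$, Pt\'ak's inequality $p\le\dim-1$ (Proposition~\ref{1.2}(a)) pins $\dim K=k+1$, and a norm argument on the upper-triangular form identifies $A|_K$ with $J_{k+1}$; your ending then goes through essentially as written. If you prefer to salvage your own route, you would first have to prove $\dim H_0=k+1$, so that $p_{A_0}=\dim H_0-1$ and Proposition~\ref{1.2}(b) makes $A_0$ a multiple of an $S_{k+1}$-matrix, to which Lemma~\ref{2.7} genuinely applies --- but that is the same missing fact in disguise.
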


\vspace{3mm}

For the proof of (a) $\Rightarrow$ (b), we need the following lemma.

\vspace{3mm}

\begin{lemma}\label{2.11}
Let
\[ A=\left[
    \begin{array}{ccccc}
      0 \ & a_1 &   &   &   \\
        & 0 & \ \ddots &   &   \\
        &   & \ddots \ & a_{n-2} &   \\
        &   &   & 0 & a_{n-1} \\
        &   &   &   & a \\
    \end{array}
  \right] \ \ \ and \ \ \ B=\left[
                            \begin{array}{cccc}
                              0 \ & a_1 &   &   \\
                                & 0 & \ \ddots &   \\
                                &   & \ddots \ & a_{n-2} \\
                                &   &   & 0 \\
                            \end{array}
                          \right] \]
be $n$-by-$n$ and $(n-1)$-by-$(n-1)$ matrices, respectively, where $n\ge 2$ and $a_j$ is nonzero for all $j$. Then $w(A)>w(B)$.
\end{lemma}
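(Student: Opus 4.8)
The plan is to pass from $A$ and $B$ to the Hermitian matrices $\re(e^{i\theta}A)$ and $\re(e^{i\theta}B)$ and use the standard identity $w(M)=\max_{\theta}\lambda_{\max}(\re(e^{i\theta}M))$, where $\lambda_{\max}$ denotes the largest eigenvalue of a Hermitian matrix. The point is that $B$ is exactly the upper-left $(n-1)$-by-$(n-1)$ block of $A$, so $\re(e^{i\theta}B)$ is the upper-left $(n-1)$-by-$(n-1)$ block of $\re(e^{i\theta}A)$ for every $\theta$. Cauchy interlacing therefore gives $\lambda_{\max}(\re(e^{i\theta}B))\le\lambda_{\max}(\re(e^{i\theta}A))$ for each $\theta$, and maximizing over $\theta$ yields $w(B)\le w(A)$ (this is also immediate from $B$ being a compression of $A$). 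Everything then reduces to excluding the equality $w(A)=w(B)$.

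So assume $w(A)=w(B)=:w$ and seek a contradiction. First I would fix an angle $\theta_0$ with $\lambda_{\max}(\re(e^{i\theta_0}B))=w(B)$; since $B$ is a weighted shift it is unitarily similar to $e^{i\theta}B$ for every real $\theta$, so in fact any $\theta_0$ works and one may even take $\theta_0=0$. Put $H=\re(e^{i\theta_0}A)$ and $H_B=\re(e^{i\theta_0}B)$, so that $H=\begin{bmatrix}H_B & v\\ v^{*} & d\end{bmatrix}$ for some $v\in\mathbb{C}^{n-1}$ and $d\in\mathbb{R}$. Because the only entry of $A$ in its last column above the diagonal is $a_{n-1}$, the vector $v$ is a nonzero scalar multiple of the last standard basis vector $e_{n-1}$ of $\mathbb{C}^{n-1}$ — and here the hypothesis $a_{n-1}\ne 0$ enters. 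Moreover $\lambda_{\max}(H)=w$, since $w=\lambda_{\max}(H_B)\le\lambda_{\max}(H)\le w(A)=w$.

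Now take a unit eigenvector $u$ of $H_B$ for its top eigenvalue $w$ and set $\widetilde u=(u,0)\in\mathbb{C}^n$. Then $\langle H\widetilde u,\widetilde u\rangle=\langle H_B u,u\rangle=w=\lambda_{\max}(H)$, so the unit vector $\widetilde u$ realizes the largest eigenvalue of the Hermitian matrix $H$ and is therefore a $w$-eigenvector of $H$; examining the last coordinate of $H\widetilde u=w\widetilde u$ gives $v^{*}u=0$, hence $u\perp e_{n-1}$, i.e.\ the last coordinate of $u$ vanishes. On the other hand $H_B$ is tridiagonal with zero diagonal and with all of its super- and subdiagonal entries nonzero (they are $\tfrac12 e^{\pm i\theta_0}a_j$ up to conjugation, $1\le j\le n-2$). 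Reading the equation $H_B u=wu$ one row at a time from the bottom up, the last row forces the $(n-2)$-nd coordinate of $u$ to vanish, the next row the $(n-3)$-rd, and so on down to the first coordinate; thus $u=0$, contradicting $\|u\|=1$. Hence $w(A)\ne w(B)$, and with $w(B)\le w(A)$ this gives $w(A)>w(B)$. (When $n=2$ one simply notes $B=[0]$, so $w(B)=0<w(A)$ because $A\ne 0$; the argument above also applies, the downward cascade being vacuous.)

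I expect the only real subtlety to be this last step: upgrading the soft inequality ``a compression has no larger numerical radius'' to a strict one. The mechanism is that equality would force a top eigenvector of the Jacobi matrix $H_B$ to be orthogonal to $e_{n-1}$, whereas an irreducible Jacobi matrix (no vanishing off-diagonal entries, which is exactly the content of $a_1,\dots,a_{n-2}\ne0$) cannot have an eigenvector with a vanishing end coordinate. Everything else — the real-part characterization of $w$, Cauchy interlacing, and the bordered-matrix computation $H\widetilde u=w\widetilde u\Rightarrow v^{*}u=0$ — is routine.
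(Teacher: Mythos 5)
Your argument is correct, and it reaches the conclusion by a genuinely different mechanism than the paper. The paper proceeds by induction on $n$: after rotating so that $w(A)=\lambda_{\max}(\re A)$, it expands $\det(zI-\re A)$ along the last row to get the three-term Jacobi recursion $p(z)=(z-\re a)\,q(z)-\tfrac14|a_{n-1}|^2 r(z)$ relating the characteristic polynomials of $\re A$, $\re B$ and the next corner $\re C$, and shows that $w(A)=w(B)$ would force $r(w(B))=0$ and hence $w(B)=w(C)$, contradicting the induction hypothesis. You instead argue directly: equality would make the extremal unit vector for the bordered Hermitian matrix $H=\re(e^{i\theta_0}A)$ live in the top corner, hence be a top eigenvector of $H$, hence be orthogonal to the border vector $v\propto e_{n-1}$; but a top eigenvector of the irreducible Jacobi matrix $H_B$ cannot have vanishing last coordinate, by the bottom-up cascade. (Two small points that you handle correctly and that are worth keeping explicit: the circular symmetry of $W(B)$ is what lets you pick $\theta_0$ freely while still having $\lambda_{\max}(H_B)=w(B)$, and the chain $w=\lambda_{\max}(H_B)\le\lambda_{\max}(H)\le w(A)=w$ is what pins down $\lambda_{\max}(H)=w$ without knowing that $\theta_0$ is extremal for $A$.) Both proofs ultimately exploit the same structural fact --- the nonvanishing of $a_1,\dots,a_{n-1}$ makes the tridiagonal real part irreducible --- but yours trades the induction and determinant bookkeeping for the standard eigenvector-cascade property of irreducible Jacobi matrices, which makes the role of the hypothesis $a_j\ne 0$ more transparent; the paper's recursion, on the other hand, is self-contained at the level of characteristic polynomials and needs no discussion of extremal vectors.
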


\vspace{1pt}

\begin{proof}
We prove this by induction on $n$. If $n=2$, then $A=${\footnotesize$\left[ \begin{array}{cc} 0 & a_1 \\ 0 & a \\ \end{array} \right]$} and $B=[0]$, in which case we obviously have $w(A)>0=w(B)$. Assume now that the assertion is true for the matrix $A$ of size at most $n-1$ ($n\ge 3$), and let $A$ and $B$ be of the above form. By considering $e^{i\theta}A$ for a suitable real $\theta$ instead of $A$, we may assume that $w(A)$ equals the largest eigenvalue of $\re A$. Let
\[ C=\left[ \begin{array}{cccc}
                              0 \ & a_1 &   &   \\
                                & 0 & \ \ddots &   \\
                                &   & \ddots \ & a_{n-3} \\
                                &   &   & 0 \\
                            \end{array}
                          \right], \]
and let $p(z)$, $q(z)$ and $r(z)$ be the characteristic polynomials of $\re A$, $\re B$ and $\re C$, respectively. We expand the determinant of
\[ \left[
    \begin{array}{ccccc}
      z  & -a_1/2 &   &   &   \\
      -\overline{a}_1/2  & z &  \ddots &   &   \\
        & \ddots  & \ddots  & \ddots &   \\
        &   & \ddots  & z & -a_{n-1}/2 \\
        &   &   & -\overline{a}_{n-1}/2  & z-\re a \\
    \end{array}
  \right]\]
by minors on its last row to obtain $p(z)=(z-\re a)q(z)-(|a_{n-1}|^2/4)r(z)$. Let $\alpha$, $\beta$ and $\gamma$ be the largest eigenvalues of $\re A$, $\re B$ and $\re C$, respectively. Then $\alpha=w(A)$, $\beta=w(B)$ and $\gamma=w(C)$. Since $\re B$ (resp., $\re C$) is a principal submatrix of $\re A$ (resp., $\re B$), we have $\beta\le \alpha$ (resp., $\gamma\le\beta$). Assume that $\alpha=\beta$. Then the above equation yields
\[ 0=p(\alpha)=(\alpha-\re a)q(\beta)-\frac{1}{4}|a_{n-1}|^2\gamma(\beta)=-\frac{1}{4}|a_{n-1}|^2\gamma(\beta).\]
Since $a_{n-1}\neq 0$ and $\beta$ is larger than or equal to all eigenvalues of $\re C$, we infer from $\gamma(\beta)=0$ that $\beta=\gamma$ or $w(B)=w(C)$. This contradicts our induction hypothesis for $B$ and $C$. Hence we must have $\alpha>\beta$ or $w(A)>w(B)$.
\end{proof}

\vspace{2mm}

\begin{proof}[Proof of Theorem $\ref{2.10}$]
By Theorem \ref{2.5}, the assumption $\|A\|=\|A^k\|=1$ implies that $w(A\otimes J_{k+1})=w(J_{k+1})$. Hence
\[ w(A)=\|J_{k+1}\|w(A)\ge w(A\otimes J_{k+1})=w(J_{k+1})=\cos\frac{\pi}{k+2} \]
as asserted.

\vspace{3mm}

We now prove the equivalence of (a), (b) and (c). The implications (b) $\Rightarrow$ (c) and (c) $\Rightarrow$ (a) are trivial. To prove (a) $\Rightarrow$ (b), let $x$ be a unit vector in $\mathbb{C}^n$ such that $\|A^kx\|=1$. Then $\|A^jx\|=1$ for all $j$, $0\le j\le k$. We now check that $A^{k+1}x=0$. Assuming otherwise that $\|A^{k+1}x\|>0$, let $u_t=[u_{t 1} \ \ldots \ u_{t, k+2}]^T$ in $\mathbb{C}^{k+2}\otimes \mathbb{C}^n$, where
\[ u_{t j}=\left\{ \begin{array}{ll} \frac{\textstyle\sqrt{1-t^2}}{\textstyle\|A^{k+1}x\|} A^{k+1}x & \ \ \ \mbox{if } \ j=1, \vspace{3mm} \\
t\sqrt{\frac{\textstyle 2}{\textstyle k+2}} \sin\frac{\textstyle (j-1)\pi}{\textstyle k+2} A^{k-j+2}x & \ \ \ \mbox{if } \ j=2, \ldots, k+2
\end{array}\right.\]
for any $t$, $0<t<1$. Note that
\[v\equiv \sqrt{\frac{2}{k+2}}\left[\sin\frac{\pi}{k+2} \ \  \sin\frac{2\pi}{k+2} \ \  \ldots \ \  \sin\frac{(k+1)\pi}{k+2}\right]^T \]
is a unit vector in $\mathbb{C}^{k+1}$ with $\langle J_{k+1}v, v\rangle=\cos(\pi/(k+2))$ (cf. \cite[Proposition 1 (3)]{8}). Hence $\|u_t\|=((1-t^2)+t^2\|v\|^2)^{1/2}=1$, and
\begin{eqnarray*}
\langle (J_{k+2}\otimes A)u_t, u_t\rangle &= & t\sqrt{1-t^2}\sqrt{\frac{2}{k+2}}\sin\frac{\pi}{k+2}\|A^{k+1}x\|\\
&& +t^2\frac{2}{k+2}\sum_{j=1}^k\sin\frac{j\pi}{k+2}\sin\frac{(j+1)\pi}{k+2}\|A^{k-j+1}x\|^2\\
&= & t\sqrt{1-t^2}\sqrt{\frac{2}{k+2}}\sin\frac{\pi}{k+2}\|A^{k+1}x\| + t^2\langle J_{k+1}v, v\rangle\\
&= & t\sqrt{1-t^2}\sqrt{\frac{2}{k+2}}\sin\frac{\pi}{k+2}\|A^{k+1}x\| + t^2\cos\frac{\pi}{k+2}.
\end{eqnarray*}
To reach a contradiction, we need to find some $t_0$, $0<t_0<1$, such that $\langle (J_{k+2}\otimes A)u_{t_0}, u_{t_0}\rangle > \cos(\pi/(k+2))$. This is the same as
\[ t_0\sqrt{1-t_0^2}\sqrt{\frac{2}{k+2}}\sin\frac{\pi}{k+2}\|A^{k+1}x\| > (1-t_0^2)\cos\frac{\pi}{k+2} \]
or
\[ \frac{t_0}{\sqrt{1-t_0^2}} > \sqrt{\frac{k+2}{2}}\frac{\cot\frac{\pi}{k+2}}{\|A^{k+1}x\|}.\]
Since $\lim_{t\rightarrow 1^-}t/\sqrt{1-t^2}=\infty$, the existence of such a $t_0$ is guaranteed. On the other hand, we also have
\[ \langle (J_{k+2}\otimes A)u_{t_0}, u_{t_0}\rangle \le w(J_{k+2}\otimes A)\le\|J_{k+2}\|w(A)=w(A)=\cos\frac{\pi}{k+2},\]
hence a contradiction. Thus we must have $A^{k+1}x=0$. Let $K$ be the subspace of $\mathbb{C}^n$ generated by $x, Ax, \ldots, A^kx$. Then $AK\subseteq K$. If $A'$ is the restriction of $A$ to $K$, then ${A'}^{k+1}=0$ and $\|{A'}^jx\|=\|A^jx\|=1$ for all $j$, $0\le j\le k$. Hence $\|{A'}^j\|=1$ for all such $j$'s. Together with ${A'}^{k+1}=0$, this says that $p_{A'}=k$ and thus $\dim K=k+1$ by Proposition \ref{1.2} (a). Therefore, $A'$ is unitarily similar to a matrix of the form $[a_{ij}]_{i,j=1}^{k+1}$ with $a_{ij}=0$ for all $i\ge j$. Since $1=\|{A'}^k\|=|a_{12}\cdots a_{k, k+1}|$, we infer that $|a_{12}|=\cdots= |a_{k, k+1}|=1$, and thus all the other $a_{ij}$'s are zero. Therefore, $[a_{ij}]_{i,j=1}^{k+1}$, and hence $A'$, is unitarily similar to $J_{k+1}$. Then $A$ is unitarily similar to a matrix of the form
\[\left[\begin{array}{c|c}  \ \ J_{k+1} \ \ & \begin{array}{c}  0 \\ \, b_1 \ \cdots \ b_{n-k-1}\end{array} \\ \hline  0 & \begin{array}{ccc}  c_1 & & *\\ & \ddots & \\ * & & c_{n-k-1}\end{array}\end{array}\right].\]
To show that all the $b_j$'s are zero, we appeal to Lemma \ref{2.11}. Indeed, for each $j$, $1\le j\le n-k-1$, consider the $(k+2)$-by-$(k+2)$ matrix
\[ A_j=\left[\begin{array}{c|c}  \ \ J_{k+1} \ \ & \begin{array}{c}  0 \\ \vdots \\ 0 \\ b_j \end{array} \\ \hline  0 & c_j \end{array}\right].\]
If $b_j\neq 0$, then $w(A_j)>w(J_{k+1})=\cos(\pi/(k+2))$ by Lemma \ref{2.11}, which contradicts $w(A_j)\le w(A)=\cos(\pi/(k+2))$. This proves (a) $\Rightarrow$ (b).
\end{proof}

\vspace{3mm}

Theorem \ref{2.10} generalizes the classical result of Williams and Crimmins \cite{17} for $k=1$. The following corollary is for $k=n-1$. Part of it has been proven in \cite{19}: the equivalence of (b) and (c) is in \cite[Theorem 1]{19} and that of (b) and (d) in \cite[p. 352]{19}.

\vspace{3mm}

\begin{corollary}\label{2.12}
The following conditions are equivalent for an $n$-by-$n$ matrix $A$ with $\|A\|=1$:
\begin{enumerate}
\item[\rm (a)] $\|A^{n-1}\|=1$ and $w(A)=\cos(\pi/(n+1))$,
\item[\rm (b)] $A$ is unitarily similar to $J_{n}$,
\item[\rm (c)] $W(A)=\{z\in\mathbb{C}: |z|\le\cos(\pi/(n+1))\}$,
\item[\rm (d)] $\|A^{n-1}\|=1$ and $A^n=0_n$, and
\item[\rm (e)] $p_A=n_A=n-1$.
\end{enumerate}
\end{corollary}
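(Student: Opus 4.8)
The plan is to prove Corollary~\ref{2.12} by assembling the equivalences from Theorem~\ref{2.10} with $k=n-1$ and from Corollary~\ref{2.12}'s own internal bookkeeping about sizes, rather than re-deriving anything from scratch. First I would observe that when $\|A\|=\|A^{n-1}\|=1$, Theorem~\ref{2.10} (applied with $k=n-1$) already gives the chain (a)~$\Leftrightarrow$~(b$'$)~$\Leftrightarrow$~(c), where (b$'$) reads ``$A$ is unitarily similar to $J_n\oplus B$ with $w(B)\le\cos(\pi/(n+1))$.'' The only extra point for (a)~$\Rightarrow$~(b) is that the summand $B$ must be absent: since $J_n$ is already $n$-by-$n$ and $A$ is $n$-by-$n$, the direct sum $J_n\oplus B$ forces $B$ to be $0$-by-$0$, i.e.\ $A$ is unitarily similar to $J_n$ outright. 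Conversely (b)~$\Rightarrow$~(a) is immediate from $\|J_n^{n-1}\|=1$ and $w(J_n)=\cos(\pi/(n+1))$ (the formula for $W(J_m)$ noted after Theorem~\ref{2.5}).

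Next I would close the loop through (d) and (e). For (b)~$\Rightarrow$~(d): $J_n^{n-1}\neq 0$ has norm $1$ and $J_n^n=0_n$, and these are invariant under unitary similarity. For (d)~$\Rightarrow$~(e): from $\|A^{n-1}\|=\|A\|^{n-1}=1$ we get $p_A\ge n-1$, and by Proposition~\ref{1.2}(a) either $p_A=n-1$ or $p_A=\infty$; the latter would force $\|A^n\|=1\neq 0$, contradicting $A^n=0_n$, so $p_A=n-1$. Also $A^n=0_n$ with $A^{n-1}\neq 0$ (since $\|A^{n-1}\|=1$) gives $n_A=n-1$ by definition of the nilpotency index. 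Finally (e)~$\Rightarrow$~(b): $p_A=n-1$ means $\|A^{n-1}\|=\|A\|^{n-1}=1$, so by Theorem~\ref{2.5} (the equivalence (d)~$\Leftrightarrow$~(f) there, applied with $A\otimes J_n$, or more directly \cite[Theorem 3.1]{4}) together with $n_A=n-1<\infty$ forcing $A$ nilpotent, Proposition~\ref{1.2}(b) identifies $A$ as a nonzero multiple of an $S_n$-matrix; being a contraction of norm $1$, it is an $S_n$-matrix, and being nilpotent with $\|A^{n-1}\|=1$ it is unitarily similar to $J_n$ by the argument already used inside the proof of Theorem~\ref{2.10}(a)~$\Rightarrow$~(b) (the upper-triangular normal form with all superdiagonal entries of modulus~$1$).

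An alternative and perhaps cleaner routing for the last step: once we know (b$'$) from Theorem~\ref{2.10}, everything reduces to the size count, so I would actually organize the proof as (b)~$\Rightarrow$~(c)~$\Rightarrow$~(a)~$\Rightarrow$~(b) using Theorem~\ref{2.10}, then (b)~$\Rightarrow$~(d)~$\Rightarrow$~(e)~$\Rightarrow$~(b) as above, and cite \cite[Theorem 1]{19} and \cite[p.~352]{19} for the fact that (b)~$\Leftrightarrow$~(c) and (b)~$\Leftrightarrow$~(d) were known, as the paragraph preceding the corollary already promises.

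I do not expect a genuine obstacle here: this corollary is a harvest of Theorem~\ref{2.10}, Theorem~\ref{2.5}, and Propositions~\ref{1.2}--\ref{1.3}. The one place requiring a moment's care is making explicit that the ``$\oplus B$'' in Theorem~\ref{2.10}(b) is vacuous when $k+1=n$, i.e.\ that $\dim\mathbb{C}^n=n=(k+1)+\dim(\text{domain of }B)$ leaves no room for $B$; this is the sole content beyond quoting the earlier results, and it is a one-line dimension count.
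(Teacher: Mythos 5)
Your proposal is correct and follows essentially the paper's own route: the paper likewise gets (a) $\Leftrightarrow$ (b) from Theorem \ref{2.10} (with the summand $B$ vacuous by the size count) and delegates the (c)- and (d)-equivalences to \cite[Theorem 1]{19} and \cite[p.~352]{19}, treating the remaining implications such as (d) $\Rightarrow$ (e) $\Rightarrow$ (b) as trivial, which you spell out correctly. One caution about your ``alternative routing'': the step (c) $\Rightarrow$ (a) is \emph{not} supplied by Theorem \ref{2.10}, since its equivalence of (a), (b), (c) holds only under the standing hypothesis $\|A^{n-1}\|=1$, which condition (c) of the corollary does not provide; so the passage from (c) back to the other conditions genuinely requires the cited \cite[Theorem 1]{19}, which you do invoke, and your primary organization is therefore the one to keep.
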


\vspace{1pt}

\begin{proof}
The equivalence of (a) and (b) is by Theorem \ref{2.10}. The other implications are either in \cite{19} or trivial.
\end{proof}

\vspace{3mm}

Note that, in the preceding corollary, the conditions that $\|A\|=1$ and $w(A)=\cos(\pi/(n+1))$ for an $n$-by-$n$ matrix $A$ are not sufficient to guarantee that $A$ be unitarily similar to $J_n$. One example is $A=J_{n-1}\oplus[\cos(\pi/(n+1))]$.

\vspace{3mm}

We end this section with a characterization of matrices $A$ satisfying $p_A=n_A$. This is related to the previous results.

\vspace{3mm}

\begin{theorem}\label{2.13}
Let $A$ be an $n$-by-$n$ matrix with $\|A\|=1$. Then
\begin{enumerate}
\item[\rm (a)] $A$ satisfies $p_A=n_A\ (\le\infty)$ if and only if either it has a unitary part or is unitarily similar to a direct sum $J_{k+1}\oplus B$, where $k=p_A<\infty$ and $B^{k+1}=0_{n-k-1}$, and
\item[\rm (b)] if $p_A=n_A\ (\le \infty)$, then $w(A\otimes A)=w(A)$ holds, but not conversely.
\end{enumerate}
\end{theorem}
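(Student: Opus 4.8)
The plan is to prove the two parts separately, with part (a) doing most of the work and part (b) following quickly from it together with Lemma~\ref{2.1} and Proposition~\ref{2.8}.

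For part (a), I would first dispose of the easy direction. If $A$ has a unitary part, then Proposition~\ref{1.2}(c) gives $\|A\|=\rho(A)$, hence $p_A=\infty$; and a matrix with a unitary part is never nilpotent, so $n_A=\infty$ as well, giving $p_A=n_A=\infty$. If instead $A$ is unitarily similar to $J_{k+1}\oplus B$ with $k=p_A<\infty$ and $B^{k+1}=0$, then $n_{J_{k+1}}=k$ by Proposition~\ref{1.3}(b) and $n_B\le k$ since $B^{k+1}=0$, so $n_A=\max\{n_{J_{k+1}},n_B\}=k=p_A$. This settles sufficiency.

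For necessity, suppose $p_A=n_A$. If this common value is $\infty$, then $p_A=\infty$ forces $\|A\|=\rho(A)=1$, and the argument in the proof of Proposition~\ref{1.2}(c) (peeling off an eigenvalue of modulus $1$) shows $A$ has a unitary part. So assume $k:=p_A=n_A<\infty$; I must produce the asserted decomposition. Since $\|A^k\|=\|A\|^k=1$, pick a unit vector $x$ with $\|A^kx\|=1$; then $\|A^jx\|=1$ for $0\le j\le k$. The crucial claim is $A^{k+1}x=0$: here I would reuse verbatim the contradiction argument from the proof of (a)~$\Rightarrow$~(b) in Theorem~\ref{2.10} --- if $A^{k+1}x\ne0$, one builds the vectors $u_t\in\mathbb{C}^{k+2}\otimes\mathbb{C}^n$ and finds $t_0$ with $\langle(J_{k+2}\otimes A)u_{t_0},u_{t_0}\rangle>\cos(\pi/(k+2))$, while $w(J_{k+2}\otimes A)\le w(A)$; but now $w(A)$ need not equal $\cos(\pi/(k+2))$, so instead I note that $n_A=k$ gives $A^{k+1}=0_n$ outright, so automatically $A^{k+1}x=0$ and no analysis of $u_t$ is needed. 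Thus the subspace $K$ spanned by $x,Ax,\dots,A^kx$ is invariant, the restriction $A'=A|_K$ satisfies ${A'}^{k+1}=0$ and $\|{A'}^j\|=1$ for $0\le j\le k$, so $p_{A'}=k$ and hence $\dim K=k+1$ by Proposition~\ref{1.2}(a). Exactly as in the proof of Theorem~\ref{2.10}, $A'$ is unitarily similar to a strictly upper triangular $(k+1)$-by-$(k+1)$ matrix with $|a_{12}\cdots a_{k,k+1}|=1$, forcing each $|a_{i,i+1}|=1$ and all other entries zero, so $A'$ is unitarily similar to $J_{k+1}$. Writing $A$ in the resulting block form $\left[\begin{smallmatrix} J_{k+1} & *\\ 0 & *\end{smallmatrix}\right]$ (the $(2,1)$ block vanishes because $\|A\|=\|J_{k+1}\|=1$ and $K$ reduces, as $A^*$ kills the last basis vector of $K$), I set $B$ to be the $(2,2)$ block and must check $B^{k+1}=0$. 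This follows from $A^{k+1}=0_n$: since $A^{k+1}$ is block upper triangular with $(2,2)$ entry $B^{k+1}$, that entry is $0$. Finally, $p_A=k$ together with the decomposition and $B^{k+1}=0$ is consistent, completing (a). The main obstacle here is getting the $(2,1)$ block to vanish and justifying that $K$ genuinely reduces $A$ rather than merely being invariant --- this is exactly the point handled in Theorem~\ref{2.10} via $(A\otimes J)^*$ annihilating a boundary vector, and the same mechanism ($\|A\|=1$ plus $A^*$ killing the appropriate vector) applies.

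For part (b): if $p_A=n_A$, then by (a) either $A$ has a unitary part --- whence $w(A\otimes A)=\|A\|w(A)=w(A)$ by Lemma~\ref{2.1} --- or $A$ is unitarily similar to $J_{k+1}\oplus B$ with $B^{k+1}=0$; in the latter case $A\otimes A$ contains $J_{k+1}\otimes J_{k+1}$ as a compression, and by Proposition~\ref{2.8} $w(J_{k+1}\otimes J_{k+1})=w(J_{k+1})=w(A)$, so $w(A)\le w(A\otimes A)\le\|A\|w(A)=w(A)$ and equality holds. For the failure of the converse, I would exhibit a single explicit matrix with $\|A\|=1$, $w(A\otimes A)=w(A)$, yet $p_A\ne n_A$; a natural candidate is $A=J_3\oplus[\,\cos(\pi/4)\,]$ or similar, where $n_A=2$ but $p_A=1$ (since $\|A^2\|=1=\|A\|^2$ actually forces $p_A\ge2$ here, so one must choose the direct summand's norm strictly between the relevant values) --- more carefully, take $A=J_2\oplus C$ where $C$ is chosen so that $w(C)\le\cos(\pi/3)=1/2$, $\|C\|=1$, but $C$ is not nilpotent, e.g. a suitable $2$-by-$2$ matrix; then $n_A=\infty$ while $p_A=1$, and one verifies $w(A\otimes A)=w(A)=1/2$ directly. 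I expect pinning down a clean counterexample to be the only fiddly part of (b), and I would present whichever small matrix makes the verification shortest.
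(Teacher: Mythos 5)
Your part (a) follows the paper's route in all essentials, but the one step you flag as "the main obstacle" is left unproved, and it is not as automatic as you suggest. In your setup $x$ is an arbitrary unit vector with $\|A^kx\|=1$, and you need $A^*x=0$ so that $K$ reduces $A$; the mechanism you cite from Theorem \ref{2.5} ($(A\otimes J_m)^*x_m=0$) is free there only because $J_m^*e_m=0$, which has no analogue here. The claim is nevertheless true: since $A^{k+1}=0_n$ we have $\ran A\subseteq\ker A^k$, so writing $x=u+v$ with $u\in\ran A$ and $v\perp\ran A$ gives $1=\|A^kx\|=\|A^kv\|\le\|v\|\le 1$, forcing $u=0$ and hence $A^*x=0$; one also needs the Cauchy--Schwarz equality argument showing $A^*f_j=f_{j+1}$ for $j\le k$. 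The paper avoids this by first putting $A$ into strictly upper triangular block form (using $A^{k+1}=0_n$) and choosing $x$ inside the bottom block, where $A^*x=0$ is automatic. Either fix works; as written, your proof of (a) has a hole exactly at the reduction step.

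Part (b) has a genuine gap. You assert $w(J_{k+1})=w(A)$, but $w(A)=\max\{w(J_{k+1}),w(B)\}$, and nothing you have said rules out $w(B)>w(J_{k+1})$; moreover $A\otimes A$ splits into four summands, and bounding $w(J_{k+1}\otimes B)$ and $w(B\otimes B)$ by $w(J_{k+1})$ requires the same missing fact. What is needed is that a contraction $B$ with $B^{k+1}=0$ satisfies $w(B)\le\cos(\pi/(k+2))=w(J_{k+1})$ --- the Haagerup--de la Harpe inequality, which the paper obtains by dilating $B$ to a direct sum of copies of $J_m$, $m\le k+1$, via \cite[Lemma 3 (a)]{21}. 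This is a substantive ingredient, not a bookkeeping step, and your argument does not close without it. Your counterexample discussion also needs repair: a $2$-by-$2$ matrix $C$ with $\|C\|=1$, $w(C)\le 1/2$ and $C$ not nilpotent does not exist (by Williams--Crimmins, $w(C)=\|C\|/2$ forces $C$ unitarily similar to $J_2$). Drop the requirement $\|C\|=1$: the paper's example $A=J_2\oplus[a]$ with $0<|a|\le 1/2$ gives $\|A\|=1$, $w(A\otimes A)=w(A)=1/2$, $p_A=1$ and $n_A=\infty$.
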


\vspace{1pt}

\begin{proof}
(a) For the necessity, we may assume, in view of Proposition \ref{1.2} (c), that $k\equiv p_A=n_A<\infty$ and prove that $A$ is unitarily similar to the asserted direct sum. Since $A^{k+1}=0_n$, $A$ is unitarily similar to a block matrix $A'$ of the form $[A_{ij}]_{i,j=1}^{k+1}$ with $A_{ij}=0$ for $1\le j\le i\le k+1$. Hence
\[ {A'}^k=\left[
            \begin{array}{cccc}
              0 & \cdots & 0 & \prod_{i=1}^kA_{i, i+1} \\
                & 0 &   & 0 \\
                &   & \ddots  & \vdots \\
                &   &   & 0 \\
            \end{array}
          \right]. \]
Since $\|{A'}^k\|=\|A^k\|=\|A\|^k=1$, we have $\|\prod_{i=1}^kA_{i, i+1}\|=1$. Let $x$ be a unit vector such that $\|(\prod_{i=1}^kA_{i, i+1})x\|=1$. Then $\|(\prod_{i=j}^kA_{i, i+1})x\|=1$ for all $j$, $1\le j\le k$. Let $\{e_1, \ldots, e_{k+1}\}$ be the standard basis for $\mathbb{C}^{k+1}$, and let $x_j=e_j\otimes(\prod_{i=j}^kA_{i, i+1})x$ if $1\le j\le k$, and $x_{k+1}=e_{k+1}\otimes x$. Then $x_1, \ldots, x_{k+1}$ are orthonormal vectors in $\mathbb{C}^n$, and $A'x_1=0$ and $A'x_j=x_{j-1}$ for $2\le j\le k+1$. Thus if $K$ is the subspace generated by $x_1, \ldots, x_{k+1}$, then $\dim K=k+1$, $A'K\subseteq K$, and the restriction of $A'$ to $K$ is unitarily similar to $J_{k+1}$. We infer from $\|A'\|=1$ and ${A'}^*x_{k+1}=0$ that $K$ reduces $A'$, and thus $A'$ is unitarily similar to $J_{k+1}\oplus B$ with $B^{k+1}=0$.

\vspace{3mm}

For the converse, if $A$ has a unitary part, then $p_A=n_A=\infty$ by Proposition \ref{1.2} (c). On the other hand, if $A$ is unitarily similar to $J_{k+1}\oplus B$ with the asserted properties, then $A^{k+1}=0$ implies that $p_A\le n_A\le k$. But
\[ \|A^k\|=\|J_{k+1}^k\oplus B^k\|=\max\{\|J_{k+1}^k\|, \|B^k\|\}=1=\|A\|^k \]
and $\|A^{k+1}\|=0<1=\|A\|^{k+1}$ together yield $p_A=n_A=k$.

\vspace{3mm}

(b) If $A$ has a unitary part, then $w(A\otimes A)=w(A)$ by Proposition \ref{2.1}. On the other hand, if $A$ is unitarily similar to $J_{k+1}\oplus B$ as in (a), then $A\otimes A$ is unitarily similar to $(J_{k+1}\otimes J_{k+1})\oplus (J_{k+1}\otimes B)\oplus(B\otimes J_{k+1})\oplus(B\otimes B)$. Note that $w(J_{k+1}\otimes J_{k+1})=w(J_{k+1})$ by Proposition \ref{2.8}, and
\begin{equation}\label{eq1}
w(J_{k+1}\otimes B)=w(B\otimes J_{k+1})\le\|J_{k+1}\|w(B)=w(B)
\end{equation}
by Proposition \ref{1.1}. Since $B^{k+1}=0$ and $\|B\|\le 1$, \cite[Lemma 3 (a)]{21} implies that $B$ can be dilated to the direct sum of $\rank(I-B^*B)$ copies of $J_m$ for some $m\le k+1$. Thus $w(B)\le w(J_m)\le w(J_{k+1})$. Combined with \eqref{eq1}, this yields $w(J_{k+1}\otimes B)\le w(J_{k+1})$. Also,
\[w(B\otimes B)\le\|B\|w(B)\le w(B)\le w(J_{k+1}).\]
Therefore,
\begin{align*}
w(A\otimes B) &= \max\{w(J_{k+1}\otimes J_{k+1}), w(J_{k+1}\otimes B), w(B\otimes B)\}\\
&= w(J_{k+1})\\
&= \max\{w(J_{k+1}), w(B)\}\\
&= w(A).
\end{align*}

\vspace{1mm}

That $w(A\otimes A)=w(A)$ does not imply $p_A=n_A$ is seen by $A=J_2\oplus[a]$, where $0<|a|\le 1/2$, in which case, $\|A\|=1$ and $w(A\otimes A)=w(A)=1/2$, but $p_A=1$ and $n_A=\infty$.
\end{proof}

\vspace{3mm}

The final result of this section is conditions for a matrix $A$ with $p_A=n_A$ so that it be unitarily similar to a block-shift matrix
\begin{equation}\label{eq2}
A'=\left[
    \begin{array}{cccc}
      0 & A_1 &   &   \\
        & 0 & \ddots &   \\
        &   & \ddots & A_k \\
        &   &   & 0 \\
    \end{array}
  \right]
\end{equation}
with $\|A_1\cdots A_k\|=\|A\|$.

\vspace{3mm}

\begin{proposition}\label{2.14}
Let $A$ be an $n$-by-$n$ matrix with $p_A=n_A\equiv k<\infty$. If either {\rm (a)} $k=1$, $n-2$ or $n-1$, or {\rm (b)} $n=2, 3, 4$ or $5$, then $A$ is unitarily similar to the block-shift matrix $A'$ in \eqref{eq2} with $\|A_1\cdots A_k\|=\|A\|$.
\end{proposition}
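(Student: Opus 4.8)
The statement is a case analysis on small values of $k$ and $n$, so the proof will be organized accordingly. The common normalization is $\|A\|=1$, and by Theorem \ref{2.13}(a) we already know $A$ is unitarily similar to $J_{k+1}\oplus B$ with $B^{k+1}=0_{n-k-1}$. The goal is to upgrade this to a single block-shift form \eqref{eq2} with $\|A_1\cdots A_k\|=1$. The case $k=n-1$ is immediate: then $B$ is $0$-by-$0$ (i.e.\ absent), $A$ is unitarily similar to $J_n=J_{k+1}$ itself, which is already a block-shift with scalar blocks $A_i=[1]$ and $\|A_1\cdots A_{n-1}\|=1$. The case $k=1$ is the classical Williams--Crimmins situation: $p_A=n_A=1$ forces $A^2=0$ and $\|A\|=1$, and a nilpotent-of-order-$2$ contraction attaining its norm is unitarily similar to $\left[\begin{smallmatrix}0&A_1\\0&0\end{smallmatrix}\right]$ (block $2\times 2$ with respect to $\ker A^*$ and its complement, in suitable coordinates) with $\|A_1\|=1$; this is exactly \eqref{eq2} with $k=1$. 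So the substance is the case $k=n-2$, and then the small-dimension cases $n=2,3,4,5$ will be handled by checking that the relevant $k$ falls under (a).

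First I would dispose of $k=n-2$. Here $B$ in the decomposition $J_{k+1}\oplus B$ is a single matrix of size $1$ with $B^{k+1}=0$, i.e.\ $B=[0]$ — wait, size $n-k-1 = 1$, so $B=[b]$ with $b^{k+1}=0$, forcing $b=0$. Thus $A$ is unitarily similar to $J_{n-1}\oplus[0]$. Now I must show $J_{n-1}\oplus[0]$ is unitarily similar to a block-shift matrix \eqref{eq2} with $k=n-2$ and $\|A_1\cdots A_{n-2}\|=1$. The underlying directed graph of $J_{n-1}\oplus[0]$ is a path on $n-1$ vertices plus an isolated vertex; a block-shift structure with $k=n-2$ corresponds to grading $\mathbb{C}^n=H_0\oplus H_1\oplus\cdots\oplus H_{n-2}$ with $A$ mapping $H_j$ into $H_{j-1}$. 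Since $A^{n-2}\neq 0$ but $A^{n-1}=0$, one must have each $H_j\neq\{0\}$, hence $\dim H_j=1$ for all $j$ except one which has dimension $2$. The isolated zero vector and the path of length $n-1$ must be distributed among the $H_j$'s compatibly with the shift: I would place the two "ends" — one path-endpoint and the isolated vector — both in $H_0$ (they are killed by $A$) or arrange the grading so that $H_0$ is two-dimensional, spanned by one endpoint of the Jordan chain and the isolated vector, and all higher $H_j$ one-dimensional along the chain. Then $A_1:H_1\to H_0$ is the $1\times 2$ matrix $[1\ 0]$ and $A_2,\dots,A_{n-2}$ are $[1]$, so $\|A_1\cdots A_{n-2}\|=1$. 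This requires a short but careful bookkeeping argument that such a grading is genuinely realized by a unitary, not merely a linear, change of basis — which it is, because the Jordan chain vectors together with the isolated vector can be taken orthonormal.

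For part (b), the point is that when $n\in\{2,3,4,5\}$ and $1\le k=p_A=n_A\le n-1$, every such $k$ is covered by part (a): for $n=2$, $k=1=n-1$; for $n=3$, $k\in\{1,2\}=\{1,n-1\}$; for $n=4$, $k\in\{1,2,3\}=\{1,n-2,n-1\}$; for $n=5$, $k\in\{1,2,3,4\}$ and here $k=2=n-3$ is the only value not obviously of the listed form, so this is the one genuinely new sub-case. I would therefore examine $n=5$, $k=2$ separately: $A$ is unitarily similar to $J_3\oplus B$ with $B$ a $2$-by-$2$ matrix satisfying $B^3=0$, hence $B^2=0$ and $\|B\|\le 1$. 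Then $B$ (if nonzero) is unitarily similar to $\left[\begin{smallmatrix}0&\beta\\0&0\end{smallmatrix}\right]$ with $0<|\beta|\le 1$, and $A$ is unitarily similar to a direct sum of two $2$-step nilpotent pieces grafted onto a $3$-step chain; regrading $\mathbb{C}^5=H_0\oplus H_1\oplus H_2$ with $\dim H_0=2$, $\dim H_1=2$, $\dim H_2=1$ and arranging the Jordan chain of $J_3$ along one "thread" and the chain of $B$ along another, one gets $A_1:H_1\to H_0$ a $2\times 2$ matrix and $A_2:H_2\to H_1$ a $2\times 1$ matrix, with $A_1A_2$ a $2\times 1$ column whose norm is $\|J_3^2\|=1$ (the $J_3$-thread contributes the full norm). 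The main obstacle, and where I expect to spend the most care, is exactly this combinatorial placement of the several Jordan chains into a common grading $H_0\oplus\cdots\oplus H_k$ while (i) keeping the change of basis unitary and (ii) ensuring the product $A_1\cdots A_k$ of off-diagonal blocks still has norm $1$ — the latter works because the longest chain, of length $k+1$, threads through every level and carries a unit vector all the way down, but verifying that the orthogonal complement contributions do not force the grading to misbehave is the delicate part.
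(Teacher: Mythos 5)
Your proposal is correct and follows essentially the same route as the paper: invoke Theorem \ref{2.13}(a) to reduce to $J_{k+1}\oplus B$, observe that $k=n-1$ and $k=n-2$ force $A\cong J_n$ and $A\cong J_{n-1}\oplus[0]$ respectively, handle $k=1$ directly from $A^2=0$, and dispose of part (b) by noting that $n=5$, $k=2$ is the only uncovered case, where $A\cong J_3\oplus\left[\begin{smallmatrix}0&b\\0&0\end{smallmatrix}\right]$ is regraded by a permutation of the orthonormal basis into a block-shift with $A_1A_2=\left[\begin{smallmatrix}1\\0\end{smallmatrix}\right]$. The ``delicate'' regrading you worry about is indeed just a permutation similarity (the paper writes it out explicitly), and your only slip is a harmless transpose in the $k=n-2$ case, where $A_1\colon H_1\to H_0$ with $\dim H_0=2$ should be $2\times 1$, not $1\times 2$.
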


\vspace{1pt}

\begin{proof}
We may assume that $\|A\|=1$.

\vspace{3mm}

(a) If $k=n_A=1$, then $A^2=0_n$. Hence $A$ is unitarily similar to a block-shift matrix of the form {\footnotesize $\left[
    \begin{array}{cc}
      0 & A_1  \\
      0  & 0
    \end{array}
  \right]$} with $\|A_1\|=\|A\|$.

\vspace{3mm}

If $k=p_A=n_A=n-1$ (resp., $n-2$), then Theorem \ref{2.13} (a) implies that $A$ is unitarily similar to $J_n$ (resp., $J_{n-1}\oplus[0]$). The latter matrix plays the role of $A'$ with $k=n-1$ (resp., $n-2$) and $A_1=\cdots=A_{n-1}=[1]$ (resp., $A_1=\cdots=A_{n-3}=[1]$ and $A_{n-2}=[1 \ \ 0]$).

\vspace{3mm}

(b) In light of (a), we need only prove for $n=5$ and $k=2$. Invoking Theorem \ref{2.13} to obtain the unitary similarity of $A$ and $J_3\oplus${\footnotesize $\left[
    \begin{array}{cc}
      0 & b  \\
      0  & 0
    \end{array}
  \right]$}, where $|b|\le 1$. The latter matrix is permutationally similar to a block-shift matrix $A'$ with $k=2$, $A_1=${\footnotesize $\left[
    \begin{array}{cc}
      1 & 0  \\
      0  & b
    \end{array}
  \right]$} and $A_2=${\footnotesize $\left[
    \begin{array}{c}
       1  \\
      0
    \end{array}
  \right]$}. We obviously have $\|A_1A_2\|=\|${\footnotesize $\left[
    \begin{array}{c}
       1  \\
      0
    \end{array}
  \right]$}$\|=1=\|A\|$.
\end{proof}

\vspace{3mm}

We remark that the preceding proposition fails for $n=6$ and $k=2$. Here is an example. Let $A=J_3\oplus B$, where
\[B=b \left[
    \begin{array}{ccc}
      0 & 1 & 1  \\
      0  & 0 & 1 \\
      0 & 0 & 0
    \end{array}
  \right]\]
with $b=\sqrt{2/(3+\sqrt{5})}$. Then $\|A^2\|=1=\|A\|^2$ and $A^3=0_6$. This shows that $p_A=n_A=2$. Since $w(B)=2b>\sqrt{2}/2=w(J_3)$ and $w(B)$ is not a circular disc centered at the origin (cf. \cite[Theorem 4.1 (2)]{13}), we infer that nor is $W(A)$ ($=$ the convex hull of $W(J_3)\cup W(B)$). This implies that $A$ cannot be unitarily similar to a block-shift matrix.

\vspace{10mm}

\section{Nonnegative Matrices}

Recall that a matrix $A=[a_{ij}]_{i,j=1}^n$ is \emph{nonnegative} (resp., \emph{positive}), denoted by $A\succcurlyeq 0$ (resp., $A\succ 0$), if $a_{ij}\ge 0$ (resp., $a_{ij}>0$) for all $i$ and $j$. Two $n$-by-$n$ matrices $A$ and $B$ are \emph{permutationally similar} if there is an $n$-by-$n$ \emph{permutation matrix} $P$ (one with each row and column has exactly one 1 and all other entries 0) such that $P^TAP=B$. $A$ is said to be (\emph{permutationally}) \emph{reducible} if either $A$ is the 1-by-1 zero matrix or $n\ge 2$ and it is permutationally similar to a matrix of the form {\footnotesize$\left[ \begin{array}{cc} B & C \\ 0 & D \\ \end{array} \right]$}, where $B$ and $D$ are square matrices; otherwise, it is (\emph{permutationally}) \emph{irreducible}. It is known that if $A$ is nonnegative with $\re A$ irreducible, then it is permutationally similar to a block-shift matrix if and only if its numerical range is a circular disc centered at the origin (cf. \cite[Theorem 1 (a)$\Leftrightarrow$(r)]{16}). Other properties of nonnegative matrices can be found in \cite[Section 6.2 and Chapter 8]{11}.

\vspace{3mm}

The main result of this section is the following theorem, which essentially generalizes Theorem \ref{2.5}.

\vspace{3mm}

\begin{theorem}\label{3.1}
Let $A$ be an $n$-by-$n$ matrix and $B$ an $m$-by-$m$ nonnegative matrix with $\re B$ irreducible. Then the following conditions are equivalent:
\begin{enumerate}
\item[\rm (a)] $w(A\otimes B)=\|A\|w(B)$,
\item[\rm (b)] either $p_A=\infty$ or $n_B\le p_A<\infty$ and $W(B)$ is a circular disc centered at the origin, and
\item[\rm (c)] either $p_A=\infty$ or $n_B\le p_A<\infty$ and $B$ is permutationally similar to a block-shift matrix
\[\left[
    \begin{array}{cccc}
      0 & B_1 &   &   \\
        & 0 & \ddots &   \\
        &   & \ddots & B_k \\
        &   &   & 0 \\
    \end{array}
  \right]\]
with $k=n_B$.
\end{enumerate}
\end{theorem}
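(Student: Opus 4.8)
The plan is to establish the cycle (a) $\Rightarrow$ (b) $\Rightarrow$ (c) $\Rightarrow$ (a), using the circular-disc characterization of Theorem \ref{2.2} to get started and the structure theory of nonnegative matrices with irreducible real part to pin down the block-shift form. First, for (a) $\Rightarrow$ (b): if $p_A = \infty$ we are done, so assume $p_A < \infty$, which by Proposition \ref{1.2}(c) means $\|A\| \neq w(A)$; normalizing $\|A\| = 1$ and applying Theorem \ref{2.2} to the hypothesis $w(A \otimes B) = w(B)$ forces $A$ to be c.n.u. with $W(B)$ a circular disc centered at the origin (the c.n.u.\ alternative in Theorem \ref{2.2} must hold, since a unitary part would give $p_A = \infty$). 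It then remains to prove $n_B \le p_A$. Since $W(B)$ is a disc about $0$, $B$ is quasinilpotent in the sense that all eigenvalues vanish, so $n_B$ is finite and equals the nilpotency index; write $k = n_B$ so $B^k \neq 0$, $B^{k+1} = 0$. The key estimate is that, because $B^k \neq 0$, the $(k+1)$-by-$(k+1)$ Jordan block $J_{k+1}$ is a compression of $B$ up to scaling — more precisely $W(B)$ being a disc of radius $w(B)$ together with $\re B$ irreducible lets us use \cite[Theorem 1]{16} to write $B$ (after permutation) as a block-shift matrix with $k$ superdiagonal blocks, and then one extracts from it a dilation-copy of $(w(B)/\cos(\pi/(k+2)))\,J_{k+1}$ scaled so that $J_{k+1}$ appears as a compression. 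Feeding this into $w(A \otimes B) = w(B)$ and comparing with $w(A \otimes J_{k+1})$, Theorem \ref{2.5} yields $\|A^{k}\| = \|A^{n_B - 1 + 1}\| \cdots$; the clean way is: the block-shift form makes $B^{k}$ a nonzero product of the superdiagonal blocks, and a compression argument reduces $w(A \otimes B) = \|A\|w(B)$ to $w(A \otimes J_{k+1}) = w(J_{k+1})$, whence $\|A^{k}\| = 1 = \|A\|^{k}$ by Theorem \ref{2.5}(b)$\Leftrightarrow$(d), giving $p_A \ge k = n_B$.

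Next, (b) $\Rightarrow$ (c) is essentially a citation: when $p_A = \infty$ there is nothing to prove, and when $n_B \le p_A < \infty$ with $W(B)$ a circular disc centered at the origin, the equivalence \cite[Theorem 1 (a)$\Leftrightarrow$(r)]{16} for nonnegative $B$ with $\re B$ irreducible says exactly that $B$ is permutationally similar to a block-shift matrix; one then checks that the number $k$ of superdiagonal blocks equals $n_B$, which follows since for a block-shift matrix with $k$ blocks the $k$-th power is the (generically nonzero) product $B_1 \cdots B_k$ placed in the corner while the $(k+1)$-st power vanishes, and irreducibility of $\re B$ forces that corner product to be nonzero.

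Finally, (c) $\Rightarrow$ (a): if $p_A = \infty$ this is immediate from Lemma \ref{2.1}. Otherwise $B$ is a block-shift matrix with $k = n_B \le p_A$ superdiagonal blocks $B_1, \dots, B_k$. Normalize $w(B) = 1$. The forward inequality $w(A \otimes B) \le \|A\|w(B)$ is Proposition \ref{1.1}, so I need the reverse. Here I would exhibit an explicit unit vector: choose a unit vector $y$ in the last block-coordinate space on which $\|B_1 \cdots B_k y\| = \|B^k\| \cdot(\text{normalizing constant})$, build the vectors $y, B_k y, B_{k-1}B_k y, \dots$ analogously to the construction in the proof of Theorem \ref{2.5}(d)$\Rightarrow$(c), and — since $\|A^{k}\| = \|A\|^{k}$ because $p_A \ge k$ — pick a unit vector $x$ with $\|A^{k}x\| = \|A\|^{k}$ so that $\|A^{j}x\| = \|A\|^{j}$ for $0 \le j \le k$. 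Tensoring these and weighting the $j$-th component by the entries of the optimal vector $v$ for $J_{k+1}$ (the one with $\langle J_{k+1}v,v\rangle = \cos(\pi/(k+2))$), one produces a unit vector $z$ in $\mathbb{C}^n \otimes \mathbb{C}^m$ with $|\langle (A \otimes B)z, z\rangle|$ equal to $\|A\| \, w(J_{k+1})$ scaled up to $\|A\|\, w(B)$ using that the disc $W(B)$ has radius realized along the block-shift chain. The main obstacle I anticipate is precisely this last bookkeeping: matching the "Jordan chain" weights in the tensor vector to the block structure of $B$ so that the inner product comes out to $\|A\|w(B)$ on the nose, and verifying that the relevant norms $\|B_i \cdots B_j y\|$ really are all equal along the chain — this is where the irreducibility of $\re B$ and the disc shape of $W(B)$ (which forces a rigidity on the singular values of the block products, via \cite{16}) must be used carefully rather than merely cited.
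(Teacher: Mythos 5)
Your overall skeleton (the cycle (a)$\Rightarrow$(b)$\Rightarrow$(c)$\Rightarrow$(a), Theorem \ref{2.2} for the circular disc, and \cite[Theorem 1]{16} for passing between the disc and the block-shift form) matches the paper, and your (b)$\Rightarrow$(c) is fine. But the step $n_B\le p_A$ in (a)$\Rightarrow$(b) has a genuine gap. You propose to extract a scaled copy of $J_{k+1}$ as a compression of $B$ and thereby ``reduce'' $w(A\otimes B)=w(B)$ to $w(A\otimes J_{k+1})=w(J_{k+1})$. This reduction does not follow: if $C$ is a compression of $B$, then $A\otimes C$ is a compression of $A\otimes B$, which only gives $w(A\otimes C)\le w(A\otimes B)=w(B)$ --- the inequality points the wrong way, and nothing forces the numerical radius of $A\otimes B$ to be attained on the compressed subspace. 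Moreover, the natural compression of a block-shift matrix onto the chain $\mathrm{span}\{y,\,B_ky,\,B_{k-1}B_ky,\dots\}$ is a weighted shift with generally unequal weights, not a multiple of $J_{k+1}$, and its numerical radius need not equal $w(B)$. The paper's argument at this point is entirely different and is where the nonnegativity of $B$ is really used: writing a maximizing vector for $B\otimes A$ as $x=[x_1\ \dots\ x_m]^T$ with $x_j\in\mathbb{C}^n$, one gets $w(B)\le\sum_{i,j}b_{ij}|\langle Ax_j,x_i\rangle|\le\sum_{i,j}b_{ij}\|Ax_j\|\,\|x_i\|\le\|A\|\sum_{i,j}b_{ij}\|x_j\|\,\|x_i\|\le\langle Bx',x'\rangle\le w(B)$ with $x'=[\|x_1\|\ \dots\ \|x_m\|]^T$; equality throughout, together with the uniqueness of the positive maximizing vector of a nonnegative $B$ with $\re B$ irreducible (\cite[Proposition 3.3]{14}), forces every $x_j\neq 0$ and $Ax_j=\lambda_{ij}x_i$ with $|\lambda_{ij}|=\|x_j\|/\|x_i\|$ whenever $b_{ij}>0$; chaining along $k=n_B$ nonzero entries of $B$ (Lemma \ref{3.2}(a)) yields $\|A^kx_{i_k}\|=\|x_{i_k}\|$, hence $\|A^k\|=1$. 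Nothing in your writeup supplies a substitute for this equality analysis.

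Your (c)$\Rightarrow$(a) construction also does not close as described: weighting the tensor components by the optimal vector $v$ for $J_{k+1}$ produces an inner product proportional to $\cos(\pi/(k+2))$ times products of block norms, which equals $w(B)$ only in special cases; you correctly flag this bookkeeping as the obstacle but do not resolve it, and no rigidity of the singular values of the $B_i$ will rescue it. The fix (Lemma \ref{3.3} in the paper) is to put the ``Jordan chain'' entirely on the $A$ side and use the actual maximizing vector of $B$: take $y=[y_1\ \dots\ y_{k+1}]^T$ with $|\langle By,y\rangle|=w(B)$ and a unit vector $x$ with $\|A^kx\|=\|A\|^k=1$, and set $u=[y_1\otimes A^kx\ \ \dots\ \ y_{k+1}\otimes x]^T$; then $\|u\|=1$ and $\langle(B\otimes A)u,u\rangle=\sum_{j}\langle B_jy_{j+1},y_j\rangle\|A^{k-j+1}x\|^2=\langle By,y\rangle$, since $\|A^jx\|=1$ for all $0\le j\le k$. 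This gives $w(A\otimes B)\ge w(B)$ directly, with no matching of norms along the $B$-chain required.
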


\vspace{3mm}

For its proof, we need the following two lemmas.

\vspace{3mm}

\begin{lemma}\label{3.2}
Let $A=[a_{ij}]_{i,j=1}^n$ be a nonnegative matrix. Then the following hold:
\begin{enumerate}
\item[\rm (a)] The index $n_A$ is finite if and only if there is no sequence of indices $i_0, i_1, \ldots, i_{k-1}, i_k\ (k\ge 1)$ with $i_0=i_k$ such that $a_{i_0 i_1}, \ldots, a_{i_{k-1} i_k}$ are all nonzero. In particular, we have $n_A=\sup\{k\ge 1 : \mbox{there are distinct } i_j, 0\le j\le k, \mbox{such that } a_{i_j i_{j+1}}\neq 0 \mbox{ for all } j\}$.
\item[\rm (b)] $n_A=\infty$ if and only if there is a $k\ge 1$ such that some diagonal entry of $A^k$ is nonzero.
\item[\rm (c)] If $a_{ii}\neq 0$ for some $i$, $1\le i\le n$, then $n_A=\infty$.
\item[\rm (d)] If $A$ is irreducible, then $n_A=\infty$.
\item[\rm (e)] If $A$ is the block-shift matrix
\[\left[
    \begin{array}{cccc}
      0_{n_1} & A_1 &   &   \\
        & 0_{n_2} & \ddots &   \\
        &   & \ddots & A_k \\
        &   &   & 0_{n_{k+1}} \\
    \end{array}
  \right] \ \ \mbox{on}  \ \ \mathbb{C}^n=\mathbb{C}^{n_1}\oplus\cdots\oplus \mathbb{C}^{n_{k+1}}\]
and $\re A$ is irreducible, then $k=n_A$.
\end{enumerate}
\end{lemma}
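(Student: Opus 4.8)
The plan is to prove the five parts essentially in the order given, since each leans on combinatorial facts about paths in the digraph associated with a nonnegative matrix. For part (a), I would record that the $(i,j)$ entry of $A^k$ equals $\sum a_{i i_1}a_{i_1 i_2}\cdots a_{i_{k-1} j}$, a sum of nonnegative terms, so it is nonzero if and only if there is a walk $i=i_0, i_1, \ldots, i_k=j$ with every step-weight $a_{i_{r} i_{r+1}}$ nonzero. Hence $A^k\neq 0$ iff a walk of length $k$ with positive weights exists. If some such walk revisits a vertex, it contains a closed subwalk (a positive-weight cycle), which can be traversed arbitrarily many times to produce positive-weight walks of every length, giving $n_A=\infty$; conversely if $n_A=\infty$ then walks of arbitrarily long length exist, and any walk longer than $n$ must repeat a vertex, producing a positive-weight cycle. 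This proves the stated equivalence, and the ``in particular'' formula follows because once no positive-weight cycle exists, every positive-weight walk has distinct vertices, so its length is at most $n-1$ and $n_A$ is exactly the longest such simple path.

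For part (b), a diagonal entry $(A^k)_{ii}$ being nonzero is, by the same walk formula, exactly the existence of a positive-weight closed walk at $i$, i.e.\ a positive-weight cycle through $i$; by the argument in (a) this is equivalent to $n_A=\infty$. Part (c) is the special case $k=1$ of (b): $a_{ii}\neq 0$ is a positive-weight loop at $i$. For part (d), irreducibility of $A$ (strong connectivity of its digraph, the $1$-by-$1$ zero matrix being excluded) gives, for any $i$, a positive-weight path from $i$ to some $j$ with $a_{\cdot}>0$ and back to $i$ when $n\ge 2$, hence a positive-weight cycle and $n_A=\infty$; if $n=1$ then irreducible means $A=[a]$ with $a\neq 0$, and again $n_A=\infty$ by (c).

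Part (e) is the substantive one and I expect it to be the main obstacle. The block-shift form already forces $A^{k+1}=0$, so $n_A\le k$, and I must show $n_A\ge k$, i.e.\ that some positive-weight walk of length $k$ exists running through all $k+1$ diagonal blocks. The key is to exploit irreducibility of $\re A$: the digraph of $\re A$ is obtained from that of $A$ by symmetrizing edges, and since $A$ has edges only from block $r$ to block $r+1$, strong connectivity of $\re A$ forces, for each $r$ with $1\le r\le k$, the block $A_r$ to be nonzero --- otherwise blocks $1,\ldots,r$ would be disconnected from blocks $r+1,\ldots,k+1$ in $\re A$. Having $A_r\neq 0$ for all $r$ means there is an index $j_r$ in block $r$ and $j_{r+1}$ in block $r+1$ with $(A_r)_{j_r j_{r+1}}\neq 0$; but to chain these into a single length-$k$ walk I need the ``exit'' index used in $A_r$ to match the ``entry'' index available in $A_{r+1}$, which need not hold verbatim. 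To bridge the gap I would use irreducibility of $\re A$ more carefully: within each block $r$ (for $2\le r\le k$) there is a $\re A$-path connecting any two of its vertices, and since $\re A$ restricted to block $r$ consists only of the symmetrized edges coming from $A_{r-1}$ (into $r$) and $A_r$ (out of $r$) --- the diagonal blocks of $A$ being zero --- such a path must alternately use $A_{r-1}$- and $A_r$-edges, which lets me splice a forward $A_{r-1}$-step, possibly a backward $A_{r-1}$-step, and a forward $A_r$-step. Iterating this block by block, I can assemble a positive-weight walk that starts in block $1$, reaches block $k+1$, and uses exactly $k$ forward steps after cancelling any backward detours, giving a positive-weight walk realizing $n_A\ge k$ and hence $n_A=k$. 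The delicate point to get right is the bookkeeping of backward steps so that the net length is exactly $k$, and that is where the argument needs the most care.
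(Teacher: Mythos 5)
Your treatment of (a)--(d) is correct and essentially the paper's own argument: everything reduces to the fact that $(A^k)_{ij}$ is a sum of nonnegative terms $a_{ii_1}\cdots a_{i_{k-1}j}$, hence is nonzero exactly when a positive-weight walk of length $k$ from $i$ to $j$ exists (the paper cites Horn--Johnson, Theorem 6.2.16 for this), together with the pigeonhole extraction of a positive-weight cycle from any walk of length at least $n$. Your reading of the ``in particular'' formula as pertaining to the cycle-free (i.e.\ $n_A<\infty$) case is the right one.

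For (e), the step you flag as needing care --- splicing the nonzero entries of the $A_r$'s into one forward chain by inserting and then ``cancelling'' backward detours --- is not a bookkeeping problem that more care will fix: a walk in the undirected graph of $\re A$ that alternates forward and backward edges does not project to a directed walk for $A$, and connectivity of $\re A$ does not force a forward chain through all $k+1$ blocks once $k\ge 3$. In fact the statement of (e) is false as written. Take $n_1=n_4=1$, $n_2=n_3=2$, $k=3$ and
\[ A_1=\begin{bmatrix}1&0\end{bmatrix},\qquad A_2=\begin{bmatrix}0&1\\ 1&1\end{bmatrix},\qquad A_3=\begin{bmatrix}1\\ 0\end{bmatrix}. \]
Then $A$ is nonnegative, the graph of $\re A$ is the path $1$--$2$--$5$--$3$--$4$--$6$ and hence connected, so $\re A$ is irreducible; yet $A_2A_3=\begin{bmatrix}0\\ 1\end{bmatrix}$ gives $A_1A_2A_3=0$, so $A^3=0$ while $A_1A_2\neq 0$ gives $A^2\neq 0$, i.e.\ $n_A=2\neq 3=k$. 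So your approach to (e) cannot be completed. For what it is worth, the paper's own proof founders at exactly the same point: from $A_1\cdots A_k=0$ it correctly deduces that no forward chain through all $k+1$ blocks exists, and then asserts without argument that ``this results in the reducibility of $\re A$,'' which the example above refutes. (That implication does hold for $k\le 2$, which is why small cases are misleading.) The conclusion $k=n_A$ becomes true under an added hypothesis such as $A_1\cdots A_k\neq 0$, or that no $A_r$ has a zero row, or that the blocks are the canonical levels of the digraph of $A$; some such strengthening is needed both in your argument and in the paper's.
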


\vspace{1mm}

\begin{proof}
(a) Assume first that the indices $i_0, i_1, \ldots, i_{k-1}, i_k=i_0$ ($k\ge 1$) are such that $a_{i_0 i_1},\ldots, a_{i_{k-1} i_k}\neq 0$. \cite[Theorem 6.2.16]{11} says that this is the case if and only if $(A^k)_{i_0 i_0}$, the $(i_0, i_0)$-entry of $A^k$, is nonzero. Hence $A^k\neq 0_n$. Similarly, considering the sequence $i_0, \ldots, i_k, i_1, \ldots, i_k, \ldots, i_1, \ldots, i_k$ of $\ell k+1$ indices for any $\ell\ge 1$, we also obtain $A^{\ell k}\neq 0_n$. It follows that $n_A=\infty$. Conversely, assume that $n_A=\infty$. Then $A^k\neq 0_n$ for some $k\ge n$. \cite[Theorem 6.2.16]{11} yields that, for some $i$ and $j$, there are indices $i_0=i, i_1, \ldots, i_{k-1}, i_k=j$ such that $a_{i_0 i_1},\ldots, a_{i_{k-1} i_k}$ are all nonzero. By the pigeonhole principle, we infer that $i_s=i_t$ for some $s$ and $t$, $0\le s< t\le k$. Then $i_s, \ldots, i_t$ are such that $i_s=i_t$ and $a_{i_s i_{s+1}},\ldots, a_{i_{t-1} i_t}\neq 0$. This proves the converse. The expression for $n_A$ is an easy consequence of \cite[Theorem 6.2.16]{11} and the above arguments. So are (b) and (c).

\vspace{3mm}

(d) Note that the irreducibility of $A$ is equivalent to the existence, for every distinct pair $i$ and $j$, of indices $i_0=i, i_1, \ldots, i_{k-1}, i_k=j$ ($k\ge 1$) such that $a_{i_0 i_1},\ldots, a_{i_{k-1} i_k}$ are all nonzero. Combining such indices from $i$ to $j$ with those from $j$ to $i$ yields one from $i$ to $i$ with the corresponding entries nonzero. Thus $n_A=\infty$ by \cite[Theorem 6.2.16]{11} and (b).

\vspace{3mm}

(e) Since $A^{k+1}=0_n$, we have $n_A\le k$. If $n_A<k$, then $A^k=0_n$, which implies that $A_1\cdots A_k=0$. If there are any nonzero $a_{i_0 i_1}, a_{i_1 i_2}, \ldots, a_{i_{k-1} i_k}$, where $(\sum_{j=1}^{\ell}n_j)+1\le i_{\ell}\le\sum_{j=1}^{\ell+1}n_j$ for $0\le\ell\le k$, then the $(i_0, n_{k+1} - (n-i_{k}))$-entry of $A_1\cdots A_k$, being larger than or equal to $\prod_{j=0}^{k-1}a_{i_j i_{j+1}}$, is nonzero, which contradicts the zeroness of the product $A_1\cdots A_k$. Thus no such nonzero sequence exists. This results in the reducibility of $\re A$, a contradiction. Hence we must have $n_A=k$.
\end{proof}

\vspace{3mm}

We remark that the conditions in the preceding lemma can all be expressed equivalently in terms of the directed graph associated with the matrix $A$ (cf. \cite[Section 6.2]{11}).

\vspace{3mm}

\begin{lemma}\label{3.3}
Let $A$ and $B$ be $n$-by-$n$ and $m$-by-$m$ matrices, respectively. If $B$ is unitarily similar to a block-shift matrix
\begin{equation}\label{eq3}
\left[
    \begin{array}{cccc}
      0_{m_1} & B_1 &   &   \\
        & 0_{m_2} & \ddots &   \\
        &   & \ddots & B_k \\
        &   &   & 0_{m_{k+1}} \\
    \end{array}
  \right] \ \ \mbox{on}  \ \ \mathbb{C}^m=\mathbb{C}^{m_1}\oplus\cdots\oplus \mathbb{C}^{m_{k+1}}
\end{equation}
with $k\le p_A\le\infty$, then $w(A\otimes B)=\|A\|w(B)$.
\end{lemma}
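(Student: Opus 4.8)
The plan is to reduce to the scalar-diagonal case via the power-norm condition, then exploit the block-shift structure of $B$ directly. We may assume $\|A\|=1$. Since $k\le p_A$, we have $\|A^k\|=\|A\|^k=1$, so there is a unit vector $x\in\mathbb{C}^n$ with $\|A^kx\|=1$, and hence $\|A^jx\|=1$ for every $j$, $0\le j\le k$. The idea is to build a "staircase" of vectors in $\mathbb{C}^n\otimes\mathbb{C}^m$ that sees a full copy of $B$. Write $\mathbb{C}^m=\mathbb{C}^{m_1}\oplus\cdots\oplus\mathbb{C}^{m_{k+1}}$ so that $B$ acts as the block-shift in \eqref{eq3}, and let $y\in\mathbb{C}^{m_1}\oplus\cdots\oplus\mathbb{C}^{m_{k+1}}$ be a unit vector with $\langle By,y\rangle$ of modulus $w(B)$; after multiplying $B$ by a unimodular scalar (which changes neither $w(B)$ nor, up to unitary similarity, the block-shift form) we may take $\langle By,y\rangle=w(B)$. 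Decompose $y=y_1\oplus\cdots\oplus y_{k+1}$ with $y_i\in\mathbb{C}^{m_i}$.

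The key step is to consider the vector
\[
z=\sum_{i=1}^{k+1} A^{k+1-i}x\otimes y_i \in \mathbb{C}^n\otimes\mathbb{C}^m,
\]
where $y_i$ sits in the $i$-th block $\mathbb{C}^{m_i}$. Because $\|A^{k+1-i}x\|=1$ for each $i$ and the blocks $\mathbb{C}^{m_i}$ are mutually orthogonal, one checks that $\|z\|^2=\sum_i \|A^{k+1-i}x\|^2\|y_i\|^2 = \sum_i\|y_i\|^2=1$ — here I must be slightly careful, since the vectors $A^{k+1-i}x$ need not be orthogonal to one another; orthogonality of $z$'s summands comes entirely from the orthogonality of the blocks of $\mathbb{C}^m$, which suffices for computing $\|z\|$. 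Now compute $\langle (A\otimes B)z,z\rangle$: since $B$ maps the $i$-th block into the $(i-1)$-st block by $B_{i-1}$ (so $By = \bigoplus_{i} B_i y_{i+1}$, indexing the image blocks), and since in the inner product the $\mathbb{C}^n$-part pairs $A\cdot A^{k+1-i}x = A^{k+2-i}x$ against $A^{k+1-j}x$, the cross terms arrange exactly so that
\[
\langle (A\otimes B)z,z\rangle = \sum_{i} \langle A(A^{k+1-(i+1)}x), A^{k+1-i}x\rangle\,\langle B_i y_{i+1}, y_i\rangle = \sum_i \langle B_i y_{i+1}, y_i\rangle = \langle By,y\rangle = w(B),
\]
using $\|A^{k+1-i}x\|=1$ to force $\langle A\cdot A^{k-i}x,\,A^{k+1-i}x\rangle=\langle A^{k+1-i}x,A^{k+1-i}x\rangle=1$ (a contraction preserving the norm of a vector preserves that vector). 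Hence $w(B)\in W(A\otimes B)$, giving $w(A\otimes B)\ge w(B)=\|A\|w(B)$; the reverse inequality is Proposition \ref{1.1}. For the case $p_A=\infty$, Lemma \ref{2.1} already gives the conclusion directly, so we only ever need $k<\infty$.

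The main obstacle I anticipate is making the inner-product bookkeeping in the displayed computation fully rigorous: one must verify that \emph{every} cross term $\langle A^{k+2-i}x\otimes B_i y_{i+1},\ A^{k+1-j}x\otimes y_j\rangle$ with $j\ne i$ vanishes. The $\mathbb{C}^m$-factor of such a term is $\langle B_i y_{i+1}, y_j\rangle$ with $B_i y_{i+1}\in\mathbb{C}^{m_i}$ and $y_j\in\mathbb{C}^{m_j}$, which is automatically $0$ when $j\ne i$ by orthogonality of the blocks — so in fact the block-shift structure does all the work and no subtlety about the $A^{\ell}x$ being non-orthogonal ever enters the off-diagonal terms. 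The only place the contraction hypothesis is used is the on-diagonal identity $\langle A\cdot A^{k-i}x, A^{k+1-i}x\rangle = 1$, which follows from $\|A^{k+1-i}x\|=\|A^{k-i}x\|=1$ and $\|A\|\le 1$ via the equality case of Cauchy–Schwarz together with the fact that a contraction cannot strictly expand. Once this identity is nailed down, the rest is the routine chain of inequalities above.
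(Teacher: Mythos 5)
Your proof is correct and is essentially the paper's own argument: your test vector $z=\sum_{i}A^{k+1-i}x\otimes y_i$ is exactly the paper's $u$ (with the tensor factors in the opposite order), and the diagonal/off-diagonal bookkeeping is handled identically via the orthogonality of the blocks of $\mathbb{C}^m$. One cosmetic remark: the aside about the equality case of Cauchy--Schwarz and ``a contraction preserving the norm of a vector preserves that vector'' is unnecessary (and the latter claim is false in general), since $\langle A\cdot A^{k-i}x,\ A^{k+1-i}x\rangle=\|A^{k+1-i}x\|^2=1$ is a tautology because $A\cdot A^{k-i}x=A^{k+1-i}x$ as vectors.
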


\vspace{1mm}

\begin{proof}
We may assume that $\|A\|=1$ and $B$ is equal to the block-shift matrix \eqref{eq3}. Since $k\le p_A\le\infty$, we have $\|A^k\|=\|A\|^k=1$. Let $x$ be a unit vector in $\mathbb{C}^n$ such that $\|A^kx\|=1$, and let $y=[y_1 \ \ldots \ y_{k+1}]^T$, where $y_j$ is in $\mathbb{C}^{m_j}$, $1\le j\le k+1$, be a unit vector in $\mathbb{C}^m$ such that $|\langle By, y\rangle|=w(B)$. Let $u=[y_1\otimes A^kx \ \ y_2\otimes A^{k-1}x \ \ \ldots \ \ y_{k+1}\otimes x]^T$. Then $u$ is a vector in $\mathbb{C}^m\otimes \mathbb{C}^n$ with
\begin{align*}
\|u\| &= (\sum_{j=1}^{k+1}\|y_j\otimes A^{k-j+1}x\|^2)^{1/2} = (\sum_{j=1}^{k+1}\|y_j\|^2 \|A^{k-j+1}x\|^2)^{1/2} \\
&= (\sum_{j=1}^{k+1}\|y_j\|^2)^{1/2} = \|y\| = 1.
\end{align*}
Moreover, we have
\begin{eqnarray*}
&& |\langle (B\otimes A)u, u\rangle|\\
&=& \left| \left\langle \left[
    \begin{array}{cccc}
      0_{m_1n} & B_1\otimes A &   &   \\
        & 0_{m_2n} & \ddots &   \\
        &   & \ddots & B_k\otimes A \\
        &   &   & 0_{m_{k+1}n} \\
    \end{array}
  \right] \left[
    \begin{array}{c}
      y_1\otimes A^kx   \\
       y_2\otimes A^{k-1}x   \\
        \vdots \\
       y_{k+1}\otimes x
    \end{array}
  \right], \left[
    \begin{array}{c}
      y_1\otimes A^kx   \\
       y_2\otimes A^{k-1}x   \\
        \vdots \\
       y_{k+1}\otimes x
    \end{array}
  \right]\right\rangle \right| \\
&=& | \sum_{j=1}^{k}\langle (B_jy_{j+1})\otimes(A^{k-j+1}x), y_j\otimes(A^{k-j+1}x)\rangle | \\
&=& | \sum_{j=1}^{k}\langle B_jy_{j+1}, y_j\rangle\|A^{k-j+1}x\|^2 | \\
&=& | \sum_{j=1}^{k}\langle B_jy_{j+1}, y_j\rangle | \\
&=& |\langle By, y\rangle| = w(B).
\end{eqnarray*}
This shows that $w(B)\le w(B\otimes A) = w(A\otimes B)$. But $w(A\otimes B)\le \|A\|w(B)=w(B)$ always holds by Proposition \ref{1.1}. Hence $w(A\otimes B)=w(B)$ as asserted.
\end{proof}

\vspace{3mm}

We are now ready to prove Theorem \ref{3.1}.

\vspace{3mm}

\begin{proof}[Proof of Theorem $\ref{3.1}$]
For (a) $\Rightarrow$ (b), We assume that $\|A\|=1$ and $A$ is c.n.u. In view of Theorem \ref{2.2} and Proposition \ref{1.2} (c), we need only check that $w(A\otimes B)=w(B)$ implies $n_B\le p_A\ (<\infty)$. Let $B=[b_{ij}]_{i,j=1}^m$, and let $x$ be a unit vector in $\mathbb{C}^m\otimes \mathbb{C}^n$ such that $w(B\otimes A)=|\langle (B\otimes A)x, x\rangle|$. If $x=[x_1 \ \ldots \ x_m]^T$, where $x_j$ is in $\mathbb{C}^n$ for $1\le j\le m$, then
\begin{align}
w(B) &= w(B\otimes A) = |\langle [b_{ij}A]x, x\rangle| \nonumber \\
&\le \sum_{i,j}b_{ij}|\langle Ax_j, x_i\rangle| \nonumber \\
&\le \sum_{i,j} b_{ij}\|Ax_j\|\|x_i\| \label{eq4}\\
&\le \|A\|\sum_{i,j} b_{ij}\|x_j\|\|x_i\| \label{eq5} \\
&\le \langle Bx', x'\rangle \nonumber \\
&\le w(B), \label{eq6}
\end{align}
where $x'=[\|x_1\| \ \ldots \ \|x_{m}\|]^T$ is a unit vector in $\mathbb{C}^m$. This shows that the above inequalities are equalities throughout. Since $B\succcurlyeq 0$ and $\re B$ is irreducible, there is a unique unit vector $y$ in $\mathbb{C}^m$ with $y\succ 0$ such that $\langle By, y\rangle=w(B)$ (cf. \cite[Proposition 3.3]{14}). The equality in \eqref{eq6} yields that $x'=y$ and thus $x_j\neq 0$ for all $j$. Also, the equalities in \eqref{eq4} and \eqref{eq5} imply that $|\langle Ax_{j}, x_i\rangle|=\|Ax_j\|\|x_i\|=\|x_j\|\|x_i\|$ for all those $b_{ij}$'s with $b_{ij}>0$. Thus $Ax_j=\lambda_{ij}x_i$ for some $\lambda_{ij}$ satisfying $|\lambda_{ij}|=\|x_j\|/\|x_i\|$. Assume first that $k\equiv n_B<\infty$. Thus $B^k\neq 0_m$. By Lemma \ref{3.2} (a), there are distinct indices $i_0, \ldots, i_k$ such that $b_{i_0 i_1}, \ldots, b_{i_{k-1} i_k} >0$. It thus follows from above that $Ax_{i_j}=\lambda_{i_{j-1} i_j}x_{i_{j-1}}$ for $1\le j\le k$. Hence $A^kx_{i_k}=(\prod_{j=1}^k\lambda_{i_{j-1} i_j})x_{i_0}$. Since
\[ \|A^kx_{i_k}\| = (\prod_{j=1}^k\frac{\|x_{i_j}\|}{\|x_{i_{j-1}}\|})\|x_{i_0}\|=\|x_{i_k}\|,\]
we obtain $\|A^k\|=1$ or $p_A\ge k=n_B$. On the other hand, if $n_B=\infty$, then the same arguments as above with $k$ arbitrarily large yield that $p_A=\infty$, which contradicts our assumption that $A$ is c.n.u. This proves (a) $\Rightarrow$ (b).

\vspace{3mm}

That (b) $\Leftrightarrow$ (c) is a consequence of \cite[Theorem 1 (a)$\Leftrightarrow$(r)]{16}, and (c) $\Rightarrow$ (a) is by Lemma \ref{3.2} (e) and Lemma \ref{3.3}.
\end{proof}

\vspace{3mm}

Note that, in Theorem \ref{3.1}, the implication (a) $\Rightarrow$ (b) or (a) $\Rightarrow$ (c) is no longer true if $B$ is nonnegative but without the irreducibility of $\re B$. One example is $A=B=J_2\oplus[a]$, where $0<a\le 1/2$ (cf. the end of the proof of Theorem \ref{2.13} (b)). The next example shows that the same can be said if $B$ is not nonnegative but $\re B$ is irreducible.

\vspace{3mm}

\begin{example}\label{3.4}
Let $A=J_3$ and
\[B = \left[
    \begin{array}{ccc}
      0 & -\sqrt{2} &  1 \\
      0  & 0 & 1   \\
      0  & 0  & \sqrt{2}/2
    \end{array}
  \right].\]
Then $\re B$ is easily seen to be irreducible. We now show that $W(B)=\overline{\mathbb{D}}$. This is seen via \cite[Corollary 2.5]{13} by letting $u=0$ and $\lambda=\sqrt{2}/2$ therein and checking that
\[ \tr(B^*B^2)=\tr\left[
    \begin{array}{ccc}
      0 & 0 &  0 \\
      0  & 0 & 1   \\
      0  & 0  & \sqrt{2}/4
    \end{array}
  \right]=\frac{\sqrt{2}}{4}=\lambda|\lambda|^2\]
and $\tr(B^*B)=9/2\ge 5|\lambda|^2$, where $\tr(\cdot)$ denotes the trace of a matrix. We next prove that 1 is an eigenvalue of $\re(A\otimes B)$. Indeed, since
\[ \re(A\otimes B)=\frac{1}{2}\left[
    \begin{array}{ccc}
      0_3 & B &  0_3 \\
      B^*  & 0_3 & B   \\
      0_3  & B^*  & 0_3
    \end{array}
  \right],\]
we need to check that
\[ \det\left[
    \begin{array}{ccc}
      2I_3 & -B &  0_3 \\
      -B^*  & 2I_3 & -B   \\
      0_3  & -B^*  & 2I_3
    \end{array}
  \right]=0.\]
By a repeated use of the Schur decomposition, the above determinant is seen to be equal to
\begin{eqnarray*}
&& \det\,(2I_3) \, \det\left( \left[
    \begin{array}{cc}
      2I_3 & -B   \\
      -B^*  & 2I_3
    \end{array}
  \right] - \left[\begin{array}{c} -B^* \\ 0_3 \end{array} \right](\frac{1}{2}I_3)\left[-B \ \ 0_3\right]\right)\\
&=& 8 \det \left[
    \begin{array}{cc}
      2I_3-(1/2)B^*B & -B   \\
      -B^*  & 2I_3
    \end{array}
  \right]\\
&=& 8 \det \, (4I_3-B^*B-BB^*)\\
&=& 8 \det \left[
    \begin{array}{ccc}
      1 & -1 &  -\sqrt{2}/2 \\
      -1  & 1 & \sqrt{2}/2   \\
      -\sqrt{2}/2  & \sqrt{2}/2  & 1
    \end{array}
  \right]\\
&=& 0
\end{eqnarray*}
as required. Since $W(A\otimes B)$ is a circular disc centered at the origin (by the unitary similarity of $A\otimes B$ and $e^{i\theta}(A\otimes B)$ for all real $\theta$) and $w(A\otimes B)\le\|A\|w(B)=1$, we infer from $1\in\sigma(\re(A\otimes B))$ that $W(A\otimes B)=\overline{\mathbb{D}}$. Hence $w(A\otimes B)=1=\|A\|w(B)$. But, obviously, we have $n_B=\infty$ and $p_A=2$. \hfill $\square$
\end{example}

\vspace{3mm}

The next corollary gives a more concrete equivalent condition, in terms of block-shift matrices, for $w(A\otimes B)=\|A\|w(B)$ when $A=B\succcurlyeq 0$ and $\re B$ is irreducible.

\vspace{3mm}

\begin{corollary}\label{3.5}
Let $A$ be an $n$-by-$n$ nonnegative matrix with $\re A$ irreducible. Then the following conditions are equivalent:
\begin{enumerate}
\item[\rm (a)] $w(A\otimes A)=\|A\|w(A)$,
\item[\rm (b)] $p_A=n_A\ (\le\infty)$, and
\item[\rm (c)] either $A$ is unitarily similar to $[a]\oplus A'$ with $|a|\ge\|A'\|$, or $A$ is permutationally similar to a block-shift matrix
\[A''=\left[
    \begin{array}{cccc}
      0 & A_1 &   &   \\
        & 0 & \ddots &   \\
        &   & \ddots & A_k \\
        &   &   & 0 \\
    \end{array}
  \right] \]
with $\|A_1\cdots A_k\|=\|A\|$.
\end{enumerate}
\end{corollary}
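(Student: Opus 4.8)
\textbf{Proof proposal for Corollary \ref{3.5}.}

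The plan is to reduce this corollary to Theorem \ref{3.1} with $B=A$, together with the structural description of matrices satisfying $p_A=n_A$ encoded in Theorem \ref{2.13}. The cycle of implications I would establish is (a) $\Leftrightarrow$ (b) first, then (b) $\Leftrightarrow$ (c).

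For (a) $\Leftrightarrow$ (b): applying Theorem \ref{3.1} with $B=A$ (which is legitimate since $\re A$ is irreducible), condition (a) is equivalent to ``$p_A=\infty$, or $n_A\le p_A<\infty$ and $A$ is permutationally similar to a block-shift matrix with $k=n_A$.'' Now $p_A=\infty$ forces $n_A=\infty$ by Proposition \ref{1.2}(c) (since $\|A\|=\rho(A)$ means $A$ is not nilpotent, so $n_A=\infty$ by Proposition \ref{1.3}(c)), hence $p_A=n_A=\infty$. Conversely, if $n_A\le p_A<\infty$, then combined with the always-true inequality $p_A\le n_A$ from Proposition \ref{1.3}(d), we get $p_A=n_A<\infty$. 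Thus in either branch (a) is equivalent to $p_A=n_A\ (\le\infty)$, which is exactly (b). I should double-check the edge case $n=1$: then $A=[a]$ with $a\ge 0$, $\re A=[a]$ is irreducible only if $a>0$ (a $1$-by-$1$ zero matrix is reducible by definition), so $A$ is a positive scalar, $p_A=n_A=\infty$, and (a) holds trivially; this is consistent.

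For (b) $\Leftrightarrow$ (c): assume $\|A\|=1$ without loss of generality. If $p_A=n_A=\infty$, then by Proposition \ref{1.2}(c) $A$ has a unitary part; for a nonnegative matrix with $\re A$ irreducible, the unitary part must be a single eigenvalue of modulus $1$ (the unitary part of a contraction with irreducible real part is $1$-dimensional — this follows from Perron--Frobenius type reasoning, or more directly: $\|A\|=\rho(A)$ and the argument in the proof of Proposition \ref{1.2}(c) shows $A$ is unitarily similar to $[\lambda]\oplus C$, and irreducibility of $\re A$ prevents a larger unitary block since $\re A$ would then be reducible). This gives the first alternative of (c) with $|a|=1=\|A\|\ge\|A'\|$. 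If instead $p_A=n_A=k<\infty$, then Theorem \ref{2.13}(a) gives that $A$ is unitarily similar to $J_{k+1}\oplus B$ with $B^{k+1}=0_{n-k-1}$; but we must recover the stated \emph{permutational} (not merely unitary) similarity to a block-shift matrix with $\|A_1\cdots A_k\|=\|A\|$. Here is where I invoke that $A$ is nonnegative with $\re A$ irreducible: condition (b) is equivalent to (c) of Theorem \ref{3.1} (with $B=A$), which already asserts $A$ is permutationally similar to a block-shift matrix with $k=n_A$; and by Lemma \ref{3.2}(e) such a block-shift matrix indeed has $n_A=k$, so Lemma \ref{3.3}'s computation (or directly the proof of Lemma \ref{3.2}(e), which shows $A_1\cdots A_k\neq 0$) combined with $n_A=p_A=k$ giving $\|A^k\|=1$ forces $\|A_1\cdots A_k\|=\|A^k\|=\|A\|^k=1$. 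Conversely, given (c): in the first alternative $\|A\|=|a|=\rho(A)$ so $p_A=\infty$ hence (b) by the $n_A=\infty$ reasoning above; in the second alternative, Lemma \ref{3.2}(e) gives $n_A=k$ and the hypothesis $\|A_1\cdots A_k\|=\|A\|$ together with $A^{k+1}=0$ gives $\|A^k\|=\|A\|^k$ hence $p_A\ge k$, so with $p_A\le n_A=k$ we conclude $p_A=n_A=k$, which is (b).

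The main obstacle I anticipate is the bookkeeping around \emph{permutational} versus \emph{unitary} similarity in the $p_A=n_A<\infty$ case: Theorem \ref{2.13} delivers unitary similarity to $J_{k+1}\oplus B$, but (c) demands permutational similarity to a block-shift form, and these are genuinely different notions for nonnegative matrices. The resolution is to not route through Theorem \ref{2.13} at all for the block-shift conclusion, but to quote condition (c) of Theorem \ref{3.1} directly (which already says ``permutationally similar to a block-shift matrix'') and then only use Lemma \ref{3.2}(e) and the norm identities to upgrade ``$k=n_B$'' to the explicit norm condition $\|A_1\cdots A_k\|=\|A\|$. Everything else is a routine assembly of Propositions \ref{1.2}, \ref{1.3}, Lemma \ref{3.2}, and Theorem \ref{3.1}.
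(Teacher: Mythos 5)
Your cycle breaks at (b) $\Rightarrow$ (c) (equivalently (b) $\Rightarrow$ (a)) in the case $p_A=n_A=k<\infty$, and the break is a genuine circularity. In your first paragraph you assert that Theorem \ref{3.1} makes (a) equivalent to (b); but Theorem \ref{3.1} makes (a) equivalent to ``$p_A=\infty$, or $n_A\le p_A<\infty$ \emph{and} $A$ is permutationally similar to a block-shift matrix with $k=n_A$,'' and the block-shift clause does not follow from $p_A=n_A<\infty$ alone --- that is exactly what remains to be proven. In your second paragraph you then try to supply the block-shift structure by ``quoting condition (c) of Theorem \ref{3.1} directly,'' i.e., by asserting that (b) of the corollary is equivalent to (c) of Theorem \ref{3.1}; but the only bridge you have built to Theorem \ref{3.1}(c) goes through Theorem \ref{3.1}(a), which is the unproven direction. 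Theorem \ref{2.13}(a) does not rescue this: unitary similarity to $J_{k+1}\oplus B$ with $B^{k+1}=0$ does not make $W(A)$ a circular disc (the remark after Proposition \ref{2.14} exhibits $J_3\oplus B$ with $p_A=n_A=2$ whose numerical range is not a disc), so it cannot by itself deliver the Tam--Yang block-shift form.

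The missing idea is Theorem \ref{2.13}(b): for any contraction, $p_A=n_A$ already implies $w(A\otimes A)=w(A)$. That is how the paper closes the loop in this case: $p_A=n_A<\infty$ gives $w(A\otimes A)=w(A)$ by Theorem \ref{2.13}(b); since $p_A<\infty$ the matrix is c.n.u., so Theorem \ref{2.2} forces $W(A)$ to be a circular disc centered at the origin; and for a nonnegative $A$ with $\re A$ irreducible, \cite[Theorem 1 (a)$\Leftrightarrow$(r)]{16} converts the disc into permutational similarity to a block-shift matrix, after which Lemma \ref{3.2}(e) together with $\|A^k\|=\|A\|^k$ gives $\|A_1\cdots A_k\|=\|A\|$ exactly as you compute. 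The remainder of your argument --- (a) $\Rightarrow$ (b) via Proposition \ref{1.3}(d), the $p_A=n_A=\infty$ case, and (c) $\Rightarrow$ (a), (b) --- is correct and agrees with the paper; one small caveat is that your parenthetical claim that the unitary part of such an $A$ must be one-dimensional is false (the $2$-by-$2$ nonnegative permutation matrix is a counterexample), but it is also unnecessary, since one simply diagonalizes the unitary part and peels off a single eigenvalue $[a]$ with $|a|=1\ge\|A'\|$.
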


\vspace{1mm}

\begin{proof}
We may assume that $\|A\|=1$. The implication (a) $\Rightarrow$ (b) is by Theorem \ref{3.1} and Proposition \ref{1.3} (d). For (b) $\Rightarrow$ (c), if $p_A=n_A=\infty$, then $A$ has a unitary part by Proposition \ref{1.2} (c), and hence $A$ is unitarily similar to $[a]\oplus A'$ with $|a|=1\ge\|A'\|$ as asserted. On the other hand, if $p_A=n_A<\infty$, then $w(A\otimes A)=w(A)$ by Theorem \ref{2.13} (b). Hence Theorem \ref{2.2} implies that $W(A)$ is a circular disc centered at the origin. For a nonnegative $A$ with $\re A$ irreducible, this is equivalent to $A$ being permutationally similar to the block-shift matrix $A''$ (cf. \cite[Theorem 1 (a)$\Leftrightarrow$(r)]{16}). As $n_{A''}=k$ by Lemma \ref{3.2} (e), we also have $p_A=k$. Thus $\|A^k\|=\|A\|^k=1$, which yields that $\|A_1\cdots A_k\|=1=\|A\|$ as required. Finally, for (c) $\Rightarrow$ (a), if $A$ is unitarily similar to $[a]\oplus A'$ with $|a|\ge\|A'\|$, then $w(A\otimes A)=w(A)$ by Lemma \ref{2.1}. On the other hand, if $A$ is permutationally similar to the block-shift matrix $A''$ with $\|A_1\cdots A_k\|=1$, then
\[ \|A^k\|=\|{A''}^k\|=\|A_1\cdots A_k\|=1=\|A\|^k.\]
Thus $p_A\ge k=n_A$. The equality $w(A\otimes A)=w(A)$ then follows from Theorem \ref{3.1}.
\end{proof}

\vspace{3mm}

\begin{corollary}\label{3.6}
Let $A=[a_{ij}]_{i,j=1}^n$, where $a_{ij}\ge 0$ for all $i$ and $j$, $a_{ij}=0$ for $i\ge j$, and $a_{i, i+1}>0$ for all $i$. Then the following conditions are equivalent:
\begin{enumerate}
\item[\rm (a)] $w(A\otimes A)=\|A\|w(A)$,
\item[\rm (b)] $p_A=n_A=n-1$, and
\item[\rm (c)] $a_{1 2}= \cdots =a_{n-1, n}$ and $a_{ij}=0$ for all other pairs of $i$ and $j$.
\end{enumerate}
\end{corollary}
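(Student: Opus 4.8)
The plan is to close the cycle (c) $\Rightarrow$ (a) $\Rightarrow$ (b) $\Rightarrow$ (c), obtaining the first two implications from earlier results and proving the last by a short direct estimate; throughout we may assume $n\ge 2$, so that $A\neq 0_n$.

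I would begin by recording two facts that follow at once from the hypotheses on $A$. Since $A$ is strictly upper triangular, $A^{n}=0_n$; and any nonzero term in the $(i,j)$-entry of $A^{n-1}$ comes from a chain $i=k_0<k_1<\cdots<k_{n-1}=j$ of $n$ strictly increasing indices in $\{1,\dots,n\}$, which can only be $1,2,\dots,n$. Hence $A^{n-1}$ has all entries zero except the $(1,n)$-entry, which equals $a_{12}a_{23}\cdots a_{n-1,n}$; consequently $n_A=n-1$ (this product is positive) and $\|A^{n-1}\|=a_{12}a_{23}\cdots a_{n-1,n}$. Moreover $\re A$ has positive entries in positions $(i,i+1)$ and $(i+1,i)$ for $1\le i\le n-1$, so $\re A$ is irreducible and Corollary \ref{3.5} applies to $A$.

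Granting these, (c) $\Rightarrow$ (a) is immediate: under (c), $A=\|A\|J_n$, so $w(A\otimes A)=\|A\|^2\,w(J_n\otimes J_n)=\|A\|^2 w(J_n)=\|A\|\,w(A)$ by Proposition \ref{2.8}. For (a) $\Rightarrow$ (b), Corollary \ref{3.5} gives that (a) is equivalent to $p_A=n_A$, and combining this with $n_A=n-1$ yields $p_A=n_A=n-1$, which is (b).

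The only step with genuine content is (b) $\Rightarrow$ (c), and I would do it directly. Normalize so that $\|A\|=1$; then $p_A=n-1$ forces $\|A^{n-1}\|=\|A\|^{n-1}=1$ (the set $\{k:\|A^k\|=\|A\|^k\}$ is an initial segment of the positive integers, since $\|A^k\|\le\|A^j\|\,\|A\|^{k-j}$ for $j\le k$), so $a_{12}a_{23}\cdots a_{n-1,n}=1$ by the first observation. On the other hand, for each $j$ with $2\le j\le n$, writing $e_j$ for the $j$th standard basis vector,
\[ a_{j-1,j}^2\ \le\ \sum_{i=1}^{j-1}a_{ij}^2\ =\ \|Ae_j\|^2\ \le\ \|A\|^2=1, \]
so each $a_{i,i+1}$ lies in $(0,1]$; a product of numbers in $(0,1]$ equal to $1$ forces every factor to be $1$. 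Substituting $a_{j-1,j}=1$ back into the displayed chain of inequalities forces $a_{ij}=0$ for $i\le j-2$, so the only nonzero entries of $A$ are $a_{12}=\cdots=a_{n-1,n}=1$; undoing the normalization gives (c). The main obstacle is really just this squeeze argument, which is short but does rely on the exact identity $\|A^{n-1}\|=\|A\|^{n-1}$; the other point needing a word of care is the verification that $\re A$ is irreducible, without which Corollary \ref{3.5} could not be invoked.
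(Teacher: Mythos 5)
Your proposal is correct; every step checks out, including the column-norm squeeze in (b) $\Rightarrow$ (c). It does, however, take a partly different route from the paper. The paper simply feeds $A$ into Corollary \ref{3.5} (after noting $\re A$ is irreducible and $n_A=n-1$), obtaining the equivalence of (a), (b) and the condition that $A$ is permutationally similar to a block-shift matrix $A''$ with $\|A_1\cdots A_k\|=\|A\|$; since $k=n_{A''}=n_A=n-1$ forces all diagonal blocks to be $1$-by-$1$, that block-shift condition is then identified with (c). You use only the (a) $\Leftrightarrow$ (b) part of Corollary \ref{3.5}, prove (c) $\Rightarrow$ (a) directly from Proposition \ref{2.8} (since (c) just says $A=\|A\|J_n$), and replace the block-shift identification by an elementary argument for (b) $\Rightarrow$ (c): $p_A=n-1$ gives $\|A^{n-1}\|=\|A\|^{n-1}$, the only nonzero entry of $A^{n-1}$ is the superdiagonal product, and the inequality $a_{j-1,j}^2\le\|Ae_j\|^2\le\|A\|^2$ squeezes each superdiagonal entry to $\|A\|$ and kills all other entries. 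What each approach buys: the paper's is shorter given the full strength of Corollary \ref{3.5} (and hence of the Tam--Yang circular-disc theorem), but leaves the step ``$A''$ is necessarily equal to $A$'' rather terse; your (b) $\Rightarrow$ (c) is self-contained, avoids the circular-disc machinery entirely for that implication, and makes the rigidity of the entries an explicit consequence of the single norm identity $\|A^{n-1}\|=\|A\|^{n-1}$.
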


\vspace{1mm}

\begin{proof}
In this case, $A$ is nonnegative, $\re A$ is irreducible and $n_A=n-1$. Consequently, Corollary \ref{3.5} yields the equivalence of (a), (b) and the condition (c') that $A$ is permutationally similar to a block-shift matrix $A''$ as in Corollary \ref{3.5} (c). Since $k=n_{A''}=n_A$ by Lemma \ref{3.2} (e), $A''$ is necessarily equal to $A$ with $|a_{1 2}\cdots a_{n-1, n}|=\|A\|$ and $a_{ij}=0$ for all other pairs of $i$ and $j$. The norm condition above yields that $a_{12}=\cdots=a_{n-1, n}=\|A\|$. Thus (c') is the same as (c), and we have the equivalence of (a), (b) and (c).
\end{proof}

\vspace{5mm}

\noindent
{\large Acknowledgements}

This research was partially supported by the National Science Council of the Republic of China under projects NSC-101-2115-M-008-006, NSC-101-2115-M-009-001 and NSC-101-2115-M-009-004 of the respective authors. P. Y. Wu was also supported by the MOE-ATU. This paper was presented by him at the 4th International Conference on Matrix Analysis and Applications in Konya, Turkey. He thanks the organizers for their works with the conference.

\newpage

\end{document}